 \newtheorem{thm}{Theorem}[section]
 \newtheorem{lem}[thm]{Lemma}
 \newtheorem{prop}[thm]{Proposition}
 \newtheorem{ass}[thm]{Assumption}
 \theoremstyle{definition}
 \newtheorem{defn}[thm]{Definition}
 \theoremstyle{remark}
 \newtheorem{rem}[thm]{Remark}
 \numberwithin{equation}{section}
\newcommand{\overbar}[1]{\mkern 1.5mu\overline{\mkern-1.5mu#1\mkern-1.5mu}\mkern 1.5mu}
\newcommand{\reals}{\mathbb{R}}
\newcommand{\hilbert}{\mathcal{H}}
\newcommand{\id}{{\rm{Id}}}
\DeclareMathOperator*{\argmin}{argmin}
\begin{document}

\title[Linear Convergence for Douglas-Rachford Splitting]{Tight Global Linear Convergence Rate Bounds for Douglas-Rachford Splitting}

\author[Pontus Giselsson]{Pontus Giselsson}

\address{%
Department of Automatic Control\\
Box. 118\\
SE-22100 Lund\\
Sweden}

\email{pontusg@control.lth.se}

\thanks{This project is financially supported by the Swedish Foundation for Strategic Research.}

\subjclass{Primary 65K05; Secondary 47H05, 90C25, 47J25}

\keywords{Douglas-Rachford splitting, linear convergence, monotone operators, fixed-point iterations.}

\begin{abstract}

Recently, several authors have shown local and global convergence rate results for Douglas-Rachford splitting under strong monotonicity, Lipschitz continuity, and cocoercivity assumptions. Most of these focus on the convex optimization setting. In the more general monotone inclusion setting, Lions and Mercier showed a linear convergence rate bound under the assumption that one of the two operators is strongly monotone and Lipschitz continuous. We show that this bound is not tight, meaning that no problem from the considered class converges exactly with that rate. In this paper, we present tight global linear convergence rate bounds for that class of problems. We also provide tight linear convergence rate bounds under the assumptions that one of the operators is strongly monotone and cocoercive, and that one of the operators is strongly monotone and the other is cocoercive. All our linear convergence results are obtained by proving the stronger property that the Douglas-Rachford operator is contractive.


\end{abstract}

\maketitle




\section{Introduction}

Douglas-Rachford splitting \cite{DouglasRachford,LionsMercier1979}
is an algorithm that solves monotone
inclusion problems of the form
\begin{align*}
0\in Ax+Bx
\end{align*}
where $A$ and $B$ are maximally monotone operators. A class of problems
that falls under this category is composite convex optimization
problems of the form
\begin{align}
\begin{tabular}{ll}
minimize & $f(x)+g(x)$
\end{tabular}
\label{eq:comp_prob}
\end{align}
where $f$ and $g$ are proper, closed, and convex functions. This holds
 since the subdifferential of proper, closed, and convex functions
are maximally monotone operators, and since Fermat's rule says that
the optimality condition 
for solving \eqref{eq:comp_prob} is $0\in\partial f(x)+\partial g(x)$, under a suitable qualification condition.
The algorithm has shown great potential in many applications
such as signal processing \cite{Combettes2011}, image denoising
\cite{Setzer2009}, and statistical estimation \cite{BoydDistributed}
(where the dual algorithm ADMM is discussed).

It has long been known that Douglas-Rachford splitting converges under
quite mild assumptions, see
\cite{Gabay1976,LionsMercier1979,EcksteinPhD}. 
However, the rate of convergence in
the general case has just recently been shown to be $O(1/\sqrt{k})$ for the fixed-point residual,
\cite{DR_one_over_k_2012,even_lin_conv_2013,conv_split_schemes_Davis_2014}.
For general maximal monotone operator problems, where one of the
operators is strongly monotone and Lipschitz continuous, Lions and Mercier showed in
\cite{LionsMercier1979} that the Douglas-Rachford algorithm enjoys a
linear convergence rate. To the author's knowledge, this was the sole
linear convergence rate results for a long period of time for these methods. Recently,
however, many works have shown linear convergence rates for
Douglas-Rachford splitting and its dual version, ADMM, see, 
\cite{HesseLuke2013,HesseLuke2014,Phan2014,even_lin_conv_2013,Davis_Yin_2014,linConvADMM,GhadimiADMM, Panos_acc_DR_2014,lin_conv_DR_mult_block_2013,exp_conv_dist_ADMM_2013,boley_2013,Shi_lin_conv_ADMM_2014,gisBoydTAC2014metric_select,gisCDC2015,Bauschke_lin_rate_Friedrich,Bauschke_opt_rate_matr}.
The works in \cite{HesseLuke2013,HesseLuke2014,even_lin_conv_2013,boley_2013,Phan2014} concern local
linear convergence under different assumptions. The works in
\cite{lin_conv_DR_mult_block_2013,exp_conv_dist_ADMM_2013,Shi_lin_conv_ADMM_2014}
consider distributed formulations,
while the works in
\cite{Davis_Yin_2014,linConvADMM,GhadimiADMM,Panos_acc_DR_2014,LionsMercier1979,arvind_ADMM,laurent_ADMM,gisBoydTAC2014metric_select,gisCDC2015,Bauschke_lin_rate_Friedrich,Bauschke_opt_rate_matr}
show global convergence rate bounds under various assumptions. Of
these, the works in 
\cite{gisCDC2015,Bauschke_lin_rate_Friedrich,Bauschke_opt_rate_matr}
show tight linear convergence rate bounds. 
The works in \cite{Bauschke_lin_rate_Friedrich,Bauschke_opt_rate_matr} show
tight convergence rate results for problem of finding a point in the
intersection of two subspaces. In \cite{gisCDC2015} it is shown
that the linear convergence rate bounds in
\cite{gisBoydTAC2014metric_select} (which are generalizations of the
bounds in \cite{GhadimiADMM}) are tight for composite convex
optimization problems where one function is strongly convex and
smooth. All these results, except the one by Lions and Mercier, are stated in the convex optimization setting. In this paper, we will provide tight linear convergence rate bounds for monotone inclusion problems.

We consider three different sets of assumptions under which we provide linear convergence rate bounds. In all cases, the properties of Lipschitz continuity or cocoercivity, and strong monotonicity, are attributed to the operators. In the first case, we assume that one operator is strongly monotone and the other is cocoercive. In the second case, we assume that one operator is both strongly monotone and Lipschitz continuous. This is the setting considered by Lions and Mercier in \cite{LionsMercier1979}, where a non-tight linear convergence rate bound is presented. In the third case, we assume that one operator is both strongly monotone and cocoercive. We show in all these settings that our bounds are tight, meaning that there exists problems from the respective classes that converge exactly with the provided rate bound. In the second and third cases, the rates are tight for all feasible algorithm parameters, while in the first case, the rate is tight for many algorithm parameters.

\section{Background}

In this section, we introduce some notation and define some operator and
function properties.

\subsection{Notation}

We denote by $\reals$ the set of real numbers and by
$\overbar{\reals}:=\reals\cup\{\infty\}$ the extended
real line. 
Throughout this paper, $\hilbert$ denotes a separable real Hilbert
space. Its inner product is denoted by $\langle\cdot,\cdot\rangle$, its
induced norm by $\|\cdot\|$. 
We denote by $\{\phi_i\}_{i=1}^K$ any orthonormal basis in $\hilbert$,
where $K$ is the dimension of $\hilbert$ (possibly $\infty$).
The gradient to $f~:~\mathcal{X}\to\reals$ is denoted by $\nabla f$ and 
the subdifferential operator to $f~:~\mathcal{X}\to\overbar{\reals}$ is denoted by $\partial f$ and is defined as $\partial f(x_1):=\{u~|~f(x_2)\geq f(x_1)+\langle u,x_2-x_1\rangle{\hbox{ for all }} x_2\in\mathcal{X}\}$. 
The
conjugate function of $f$ is denoted and defined by 
$f^{*}(y)\triangleq \sup_{x}\left\{\langle y,x\rangle-f(x)\right\}$.
The power set of a set $\mathcal{X}$, i.e., the set of all subsets of
$\mathcal{X}$, is denoted by $2^{\mathcal{X}}$. The graph of an
(set-valued) operator $A~:~\mathcal{X}\to 2^{\mathcal{Y}}$ is defined and denoted
by ${\rm{gph}}A = \{(x,y)\in\mathcal{X}\times\mathcal{Y}~|~y\in Ax\}$.
The inverse operator $A^{-1}$ is defined through its graph by
${\rm{gph}}A^{-1} = \{(y,x)\in\mathcal{Y}\times\mathcal{X}~|~y\in
Ax\}$. The identity operator is denoted by $\id$ and the resolvent of a monotone operator $A$ is defined and denoted by $J_A=(\id+A)^{-1}$. Finally, the
class of closed, proper, and convex
functions $f~:~\hilbert\to\overbar{\reals}$ is denoted by
$\Gamma_0(\hilbert)$. 

\subsection{Operator properties}

\begin{defn}[Strong monotonicity]
  Let $\sigma> 0$. An operator $A~:~\hilbert\to 2^{\hilbert}$ is
  $\sigma$-\emph{strongly monotone} if
  \begin{align*}
    \langle u-v,x-y\rangle\geq \sigma\|x-y\|^2
  \end{align*}
  holds for all $(x,u)\in{\rm{gph}}(A)$ and $(y,v)\in{\rm{gph}}(A)$.
  \label{def:str_monotone}
\end{defn}

The operator is merely \emph{monotone} if $\sigma=0$ in the above definition.
In the following three definitions, we
state some properties for single-valued operators $T~:~\hilbert\to\hilbert$. We state the properties for operators with full domain, but they can also be stated for operators with any nonempty domain $\mathcal{D}\subseteq\hilbert$.

\begin{defn}[Lipschitz continuity]
  Let $\beta\geq 0$. A mapping $T~:~\hilbert\to\hilbert$ is
  $\beta$-\emph{Lipschitz continuous} if
  \begin{align*}
    \|Tx-Ty\|\leq \beta\|x-y\|
  \end{align*}
  holds for all $x,y\in\hilbert$. 
  \label{def:Lipschitz}
\end{defn}

\begin{defn}[Nonexpansiveness]
  A mapping $T~:~\hilbert\to\hilbert$ is
  \emph{nonexpansive} if it is $1$-Lipschitz continuous.
  \label{def:nonexpansive}
\end{defn}

\begin{defn}[Contractiveness]
  A mapping $T~:~\hilbert\to\hilbert$ is
  $\delta$-\emph{contractive} if it is $\delta$-Lipschitz continuous with $\delta\in[0,1)$.
  \label{def:contractive}
\end{defn}

\begin{defn}[Averaged mappings]
  A mapping $T~:~\hilbert\to\hilbert$ is
  $\alpha$-\emph{averaged} if there exists a nonexpansive mapping
  $R~:~\hilbert\to\hilbert$
  and $\alpha\in(0,1)$ such that $T=(1-\alpha)\id+\alpha R$.
\label{def:averaged}
\end{defn}

From \cite[Proposition~4.25]{bauschkeCVXanal}, we know that an operator $T~:~\hilbert\to\hilbert$ is
$\alpha$-averaged if and only if it satisfies
\begin{align}
\tfrac{1-\alpha}{\alpha}\|(\id-T)x-(\id-T)y\|^2+\|Tx-Ty\|^2\leq\|x-y\|^2
\label{eq:averaged}
\end{align}
for all $x,y\in\hilbert$.

\begin{defn}[Cocoercivity]
  Let $\beta> 0$. A mapping $T~:~\hilbert\to\hilbert$ is $\tfrac{1}{\beta}$-cocoercive if
  \begin{align*}
    \langle Tx-Ty,x-y\rangle\geq \tfrac{1}{\beta}\|Tx-Ty\|^2
  \end{align*}
  holds for all $x,y\in\hilbert$.
  \label{def:cocoercive}
\end{defn}

\section{Preliminaries}

In this section, we state and show preliminary results that are needed to
prove the linear convergence rate bounds. We state some lemmas
that describe how cocoercivity, Lipschitz continuity, as well as
averagedness relate to each other. We also introduce
\emph{negatively averaged operators}, $T$, that are defined by that $-T$ is
averaged. We show different properties of such operators,
including that averaged maps of negatively averaged operators are contractive. This result will be used to show linear convergence in the case where the strong monotonicity and Lipschitz continuity properties are split between the operators.

\subsection{Useful lemmas}

\label{sec:operator_prop_rel}

Proofs to the following three lemmas are found in Appendix~\ref{app:lemma_proofs}.

\begin{lem}
Assume that $\beta>0$ and let $T~:~\hilbert\to\hilbert$. Then $\tfrac{1}{2\beta}$-cocoercivity of
$\beta\id+T$ is equivalent to $\beta$-Lipschitz continuity of $T$.
\label{lem:coco_vs_lipschitz}
\end{lem}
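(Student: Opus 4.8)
The plan is to prove the equivalence by unpacking both conditions into inequalities in terms of $x-y$ and $Tx-Ty$ and observing that they coincide. Write $u = Tx - Ty$ and $w = x - y$ for brevity; both conditions only involve these two quantities, so the whole statement reduces to an algebraic identity between two scalar inequalities that must hold for all admissible pairs.

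First I would expand the cocoercivity of $\beta\id + T$. By Definition~\ref{def:cocoercive}, $\tfrac{1}{2\beta}$-cocoercivity of $S := \beta\id + T$ means $\langle Sx - Sy, x-y\rangle \geq \tfrac{1}{2\beta}\|Sx-Sy\|^2$ for all $x,y$. Since $Sx - Sy = \beta w + u$, the left-hand side is $\langle \beta w + u, w\rangle = \beta\|w\|^2 + \langle u, w\rangle$, and the right-hand side is $\tfrac{1}{2\beta}\|\beta w + u\|^2 = \tfrac{1}{2\beta}\left(\beta^2\|w\|^2 + 2\beta\langle u,w\rangle + \|u\|^2\right) = \tfrac{\beta}{2}\|w\|^2 + \langle u,w\rangle + \tfrac{1}{2\beta}\|u\|^2$. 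So the cocoercivity inequality becomes
\begin{align*}
\beta\|w\|^2 + \langle u,w\rangle \geq \tfrac{\beta}{2}\|w\|^2 + \langle u,w\rangle + \tfrac{1}{2\beta}\|u\|^2,
\end{align*}
which simplifies (the $\langle u,w\rangle$ terms cancel) to $\tfrac{\beta}{2}\|w\|^2 \geq \tfrac{1}{2\beta}\|u\|^2$, i.e. $\|u\|^2 \leq \beta^2\|w\|^2$, i.e. $\|Tx - Ty\| \leq \beta\|x-y\|$. That is exactly $\beta$-Lipschitz continuity of $T$ (Definition~\ref{def:Lipschitz}).

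Since every step in the simplification is an equivalence (no information is lost — the $\langle u,w\rangle$ terms cancel exactly, and squaring/taking square roots of nonnegative quantities is reversible), the two properties are equivalent, quantified over the same pairs $(x,y)$. I do not anticipate a genuine obstacle here; the only mild care needed is to keep the algebra straight and to note that $\langle Sx - Sy, x-y\rangle$ genuinely appears on both sides so that the cross term cancels cleanly, which is what makes the reduction collapse to a pure norm inequality rather than something involving the inner product. The hypothesis $\beta > 0$ is used only to divide through by $\beta$ and to match the normalization conventions in Definitions~\ref{def:Lipschitz} and~\ref{def:cocoercive}.
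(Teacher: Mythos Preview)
Your proof is correct but takes a different, more elementary route than the paper. The paper argues via a chain of known equivalences: it rescales $\beta\id+T$ by $\tfrac{1}{2\beta}$ to obtain a $1$-cocoercive (firmly nonexpansive) operator, invokes the standard characterization that $S$ is firmly nonexpansive if and only if $2S-\id$ is nonexpansive (citing \cite[Proposition~4.2 and Definition~4.4]{bauschkeCVXanal}), identifies $2\cdot\tfrac{1}{2\beta}(\beta\id+T)-\id=\tfrac{1}{\beta}T$, and then rescales back to conclude $\beta$-Lipschitz continuity of $T$. You instead expand the cocoercivity inequality directly and observe that the cross term $\langle u,w\rangle$ cancels, collapsing the statement to the pure norm comparison $\|u\|^2\leq\beta^2\|w\|^2$. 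Your argument is fully self-contained and requires no external reference; the paper's argument, while shorter on the page, relies on a cited result but has the advantage of situating the lemma within the firmly nonexpansive/averaged operator framework used throughout the rest of the paper.
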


\begin{lem}
Assume that $\beta\in(0,1)$. Then
$\tfrac{1}{\beta}$-cocoercivity of $R~:~\hilbert\to\hilbert$ is equivalent to
$\tfrac{\beta}{2}$-averagedness of $T=R+(1-\beta)\id$.
\label{lem:coco_averaged}
\end{lem}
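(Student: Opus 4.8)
The plan is to reduce the averagedness claim to the characterization of $\alpha$-averaged maps in \eqref{eq:averaged}, and then observe that for the specific relationship $T = R + (1-\beta)\id$ the resulting inequality is precisely a rescaled cocoercivity inequality for $R$. First I would set $\alpha = \beta/2$, which lies in $(0,1)$ since $\beta\in(0,1)$, and compute $\id - T = \beta\id - R$, so that $(\id-T)x-(\id-T)y = \beta(x-y)-(Rx-Ry)$ and $Tx-Ty = (Rx-Ry)+(1-\beta)(x-y)$. Write $d := x-y$ and $r := Rx-Ry$ for brevity. Substituting into \eqref{eq:averaged} with $\alpha=\beta/2$, the factor $\tfrac{1-\alpha}{\alpha}$ becomes $\tfrac{2-\beta}{\beta}$, and the left-hand side expands to
\begin{align*}
\tfrac{2-\beta}{\beta}\|\beta d - r\|^2 + \|r + (1-\beta)d\|^2.
\end{align*}

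Next I would expand both squared norms using $\|u+v\|^2 = \|u\|^2 + 2\langle u,v\rangle + \|v\|^2$, collect the coefficients of $\|d\|^2$, $\|r\|^2$, and $\langle d,r\rangle$, and subtract $\|d\|^2$ from both sides. A direct computation should show that the coefficient of $\|d\|^2$ cancels exactly (the $\beta$-dependent terms $\tfrac{2-\beta}{\beta}\cdot\beta^2 + (1-\beta)^2 - 1 = \beta(2-\beta) + 1 - 2\beta + \beta^2 - 1 = 0$), leaving an inequality of the form $c_1\|r\|^2 - c_2\langle d,r\rangle \le 0$ with $c_1 = \tfrac{2-\beta}{\beta} + 1 = \tfrac{2}{\beta}$ and $c_2 = 2(2-\beta)/\beta\cdot\beta - 2(1-\beta)\cdot(-1)$… more carefully, the coefficient of $\langle d,r\rangle$ should work out to $2/\beta$ as well, so that after dividing by the positive constant $2/\beta$ the inequality \eqref{eq:averaged} for $T$ with $\alpha=\beta/2$ is seen to be \emph{exactly equivalent} to $\|r\|^2 \le \langle d, r\rangle$, i.e. $\|Rx-Ry\|^2 \le \langle x-y, Rx-Ry\rangle$. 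Wait—that is $1$-cocoercivity, not $\tfrac{1}{\beta}$-cocoercivity. Here I would need to be careful with the bookkeeping: I expect the correct outcome is that the coefficients leave $\tfrac1\beta\|r\|^2 \le \langle d,r\rangle$, which is exactly $\tfrac{1}{\beta}$-cocoercivity of $R$ as in Definition~\ref{def:cocoercive}.

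Since each step in the expansion is an identity and the final division is by a strictly positive constant, the chain of equivalences runs in both directions: $\tfrac{\beta}{2}$-averagedness of $T$ holds for all $x,y\in\hilbert$ if and only if the cocoercivity inequality for $R$ holds for all $x,y\in\hilbert$. This proves the lemma. The main obstacle is purely computational—keeping the algebra of the three coefficients straight and confirming that the $\|d\|^2$ terms cancel and that the ratio of the surviving $\langle d,r\rangle$ and $\|r\|^2$ coefficients is exactly $\beta$; there is no conceptual difficulty, since \eqref{eq:averaged} already packages the averagedness property into a single inequality tailor-made for this kind of substitution. One should also note the endpoint behavior: $\beta\in(0,1)$ guarantees $\alpha=\beta/2\in(0,1/2)\subset(0,1)$, so $T$ is a legitimate averaged map, and the cocoercivity constant $1/\beta>1$, consistent with $R$ being a "less contractive" building block than a firmly nonexpansive operator.
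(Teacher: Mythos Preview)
Your plan is correct; once you do the bookkeeping carefully you will find that the coefficient of $\|d\|^2$ on the left is $\tfrac{2-\beta}{\beta}\cdot\beta^2+(1-\beta)^2=1$ (so it cancels with the right-hand side), the coefficient of $\|r\|^2$ is $\tfrac{2-\beta}{\beta}+1=\tfrac{2}{\beta}$, and the coefficient of $\langle d,r\rangle$ is $-2(2-\beta)+2(1-\beta)=-2$ (not $-2/\beta$ as you guessed midway). Hence \eqref{eq:averaged} with $\alpha=\beta/2$ reduces to $\tfrac{2}{\beta}\|r\|^2-2\langle d,r\rangle\le 0$, i.e.\ $\langle d,r\rangle\ge\tfrac{1}{\beta}\|r\|^2$, which is exactly $\tfrac{1}{\beta}$-cocoercivity of $R$. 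Every step is an identity or a division by a positive constant, so the equivalence is genuine.

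The paper takes a different and more conceptual route: it invokes Lemma~\ref{lem:coco_vs_lipschitz} to convert $\tfrac{1}{\beta}$-cocoercivity of $R$ into $\tfrac{\beta}{2}$-Lipschitz continuity of $T_1:=R-\tfrac{\beta}{2}\id$, then writes $T_1=\tfrac{\beta}{2}T_2$ with $T_2$ nonexpansive, and finally observes that $T=R+(1-\beta)\id=T_1+(1-\tfrac{\beta}{2})\id=(1-\tfrac{\beta}{2})\id+\tfrac{\beta}{2}T_2$, which is the \emph{definition} (Definition~\ref{def:averaged}) of $\tfrac{\beta}{2}$-averagedness. Your approach has the advantage of being self-contained (you use only the characterization \eqref{eq:averaged} and do not need Lemma~\ref{lem:coco_vs_lipschitz}), at the cost of a direct expansion that must be done carefully. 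The paper's approach avoids all algebra by chaining two one-line equivalences, but relies on an auxiliary lemma. Both are valid and roughly equal in length once written out cleanly.
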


\begin{lem}
Let $T~:~\hilbert\to\hilbert$ be $\delta$-contractive with $\delta\in[0,1)$. Then $R=(1-\alpha)\id+\alpha T$ is contractive for all $\alpha\in(0,\tfrac{2}{1+\delta})$. The contraction factor is $|1-\alpha|+\alpha\delta$.
\label{lem:avg_contr}
\end{lem}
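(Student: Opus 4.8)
The plan is to reduce the claim to a one-dimensional computation on contraction factors, since everything in sight is about norms of differences. Fix $x,y\in\hilbert$ and write $d:=x-y$, $u:=Tx-Ty$, so that $\|u\|\le\delta\|d\|$ by $\delta$-contractiveness. Then $Rx-Ry=(1-\alpha)d+\alpha u$, and I want to bound $\|(1-\alpha)d+\alpha u\|$ in terms of $\|d\|$. By the triangle inequality,
\begin{align*}
\|(1-\alpha)d+\alpha u\|\le|1-\alpha|\,\|d\|+\alpha\|u\|\le\bigl(|1-\alpha|+\alpha\delta\bigr)\|d\|,
\end{align*}
which immediately gives the stated Lipschitz constant $|1-\alpha|+\alpha\delta$ for $R$.

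The remaining work is to check that this constant lies in $[0,1)$ precisely when $\alpha\in(0,\tfrac{2}{1+\delta})$. It is clearly nonnegative. For the upper bound, split into the two cases coming from the absolute value. If $\alpha\in(0,1]$, then $|1-\alpha|+\alpha\delta=1-\alpha+\alpha\delta=1-\alpha(1-\delta)<1$ since $\alpha>0$ and $\delta<1$; and here $\alpha\le1<\tfrac{2}{1+\delta}$, so these $\alpha$ are automatically in the advertised range. If $\alpha>1$, then $|1-\alpha|+\alpha\delta=\alpha-1+\alpha\delta=\alpha(1+\delta)-1$, and this is $<1$ exactly when $\alpha<\tfrac{2}{1+\delta}$. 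Combining the two cases, $R$ is $\bigl(|1-\alpha|+\alpha\delta\bigr)$-contractive for every $\alpha\in(0,\tfrac{2}{1+\delta})$, as claimed.

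There is essentially no obstacle here — the only point requiring a little care is organizing the case distinction on $|1-\alpha|$ and noting that the constraint $\alpha<\tfrac{2}{1+\delta}$ is vacuous when $\alpha\le1$, so that the single interval $(0,\tfrac{2}{1+\delta})$ correctly captures both regimes. One could also phrase the argument via the convexity of $\alpha\mapsto|1-\alpha|+\alpha\delta$, but the direct case split is shortest. No appeal to the earlier lemmas is needed; only the triangle inequality and the definition of $\delta$-contractiveness are used.
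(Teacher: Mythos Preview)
Your proof is correct and follows essentially the same route as the paper: apply the triangle inequality to $(1-\alpha)(x-y)+\alpha(Tx-Ty)$, use $\delta$-contractiveness of $T$, and then verify that $|1-\alpha|+\alpha\delta<1$ on the stated range of $\alpha$. The paper's version simply asserts the last step without the explicit case split you give, so your argument is in fact slightly more detailed.
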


For easier reference, we also record special cases of some results in \cite{bauschkeCVXanal} that will be used later. Specifically, we record, in order, special cases of \cite[Proposition~4.33]{bauschkeCVXanal}, \cite[Proposition~4.28]{bauschkeCVXanal}, and \cite[Proposition~23.11]{bauschkeCVXanal}.

\begin{lem}
Let $\beta\in(0,1)$ and let $T~:~\hilbert\to\hilbert$ be $\tfrac{1}{\beta}$-cocoercive. Then $(\id-T)$ is $\tfrac{\beta}{2}$-averaged.
\label{lem:coco_averaged2}
\end{lem}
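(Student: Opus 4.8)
The plan is to verify the averagedness characterization \eqref{eq:averaged} directly for the map $\id-T$ with averaging parameter $\alpha=\beta/2$, since this reduces the claim to the assumed cocoercivity inequality. Note first that $\beta\in(0,1)$ guarantees $\alpha=\beta/2\in(0,1)$, so the equivalence in \eqref{eq:averaged} is applicable. Writing $N=\id-T$ and observing that $\id-N=T$, the left-hand side of \eqref{eq:averaged} applied to $N$ with $\alpha=\beta/2$ becomes
\begin{align*}
\tfrac{2-\beta}{\beta}\|Tx-Ty\|^2+\|(x-y)-(Tx-Ty)\|^2 .
\end{align*}
Expanding the last squared norm as $\|x-y\|^2-2\langle x-y,Tx-Ty\rangle+\|Tx-Ty\|^2$ and cancelling $\|x-y\|^2$, the desired inequality $\text{LHS}\leq\|x-y\|^2$ is equivalent to
\begin{align*}
\left(\tfrac{2-\beta}{\beta}+1\right)\|Tx-Ty\|^2\leq 2\langle x-y,Tx-Ty\rangle .
\end{align*}

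Next I would simplify the coefficient, using $\tfrac{2-\beta}{\beta}+1=\tfrac{2}{\beta}$, so that the inequality collapses to $\tfrac{1}{\beta}\|Tx-Ty\|^2\leq\langle x-y,Tx-Ty\rangle$, which is exactly the $\tfrac{1}{\beta}$-cocoercivity of $T$ from the hypothesis. Since this holds for all $x,y\in\hilbert$, the characterization \eqref{eq:averaged} gives that $\id-T$ is $\tfrac{\beta}{2}$-averaged, as claimed.

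There is essentially no real obstacle here; the only thing to notice is that $\alpha=\beta/2$ is precisely the averaging value for which the two $\|Tx-Ty\|^2$ contributions combine into $\tfrac{2}{\beta}\|Tx-Ty\|^2$, matching the cocoercivity constant exactly. As an alternative one could sidestep the computation entirely by invoking Lemma~\ref{lem:coco_averaged}: $\tfrac{\beta}{2}$-averagedness of a map $S$ is equivalent to $\tfrac{1}{\beta}$-cocoercivity of $S-(1-\beta)\id$, and taking $S=\id-T$ gives $S-(1-\beta)\id=\beta\id-T$; a one-line expansion then shows $\beta\id-T$ is $\tfrac{1}{\beta}$-cocoercive if and only if $T$ is, so the result follows. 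I would present the direct verification as the primary argument, since it is short and self-contained.
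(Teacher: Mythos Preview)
Your proof is correct. The paper itself does not prove this lemma at all; it simply records it as a special case of \cite[Proposition~4.33]{bauschkeCVXanal} for later reference. Your direct verification via the characterization \eqref{eq:averaged} is short, self-contained, and arrives at exactly the cocoercivity inequality with no slack, so nothing is lost compared to invoking the cited proposition. The alternative route through Lemma~\ref{lem:coco_averaged} that you sketch is also valid (the equivalence $\beta\id-T$ is $\tfrac{1}{\beta}$-cocoercive $\Leftrightarrow$ $T$ is $\tfrac{1}{\beta}$-cocoercive is a one-line computation, as you note), but the primary argument is already clean enough that the detour is unnecessary.
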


\begin{lem}
Let $T~:~\hilbert\to\hilbert$ be $\alpha$-averaged with $\alpha\in(0,\tfrac{1}{2})$. Then $(2T-\id)$ is $2\alpha$-averaged.
\label{lem:res_reflres_ave}
\end{lem}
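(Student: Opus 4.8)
The plan is to unwind the definition of averagedness directly. First I would invoke Definition~\ref{def:averaged}: since $T$ is $\alpha$-averaged there is a nonexpansive mapping $R~:~\hilbert\to\hilbert$ with $T=(1-\alpha)\id+\alpha R$. Then I would substitute this into $2T-\id$ and collect terms,
\begin{align*}
2T-\id=2\bigl((1-\alpha)\id+\alpha R\bigr)-\id=(1-2\alpha)\id+2\alpha R .
\end{align*}
Finally I would note that $\alpha\in(0,\tfrac12)$ forces $2\alpha\in(0,1)$, so the last expression exhibits $2T-\id$ in exactly the form required by Definition~\ref{def:averaged} for $2\alpha$-averagedness, with the \emph{same} nonexpansive $R$; this establishes the claim.

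There is no real obstacle here: the only point at which the hypothesis $\alpha<\tfrac12$ is used is to guarantee that the new averaging parameter $2\alpha$ lies in $(0,1)$, which is precisely what that bound provides (for $\alpha\ge\tfrac12$ one can only conclude that $2T-\id$ is nonexpansive). As an alternative, if one prefers to avoid passing through the nonexpansive representative, one could instead start from the characterization~\eqref{eq:averaged} satisfied by $T$ and use the identities $\id-(2T-\id)=2(\id-T)$ and $(2T-\id)x-(2T-\id)y=2(Tx-Ty)-(x-y)$ to rewrite the desired inequality~\eqref{eq:averaged} for $2T-\id$ with parameter $2\alpha$; after expanding the squared norms and simplifying, it reduces exactly to~\eqref{eq:averaged} for $T$ with parameter $\alpha$. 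Both routes work, but the one-line substitution above is the cleanest, so that is the one I would present.
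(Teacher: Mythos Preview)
Your proof is correct. The paper does not supply its own argument here: it simply records this lemma as a special case of \cite[Proposition~4.28]{bauschkeCVXanal}, so there is nothing to compare against beyond noting that your one-line substitution via Definition~\ref{def:averaged} is exactly the standard verification one would expect.
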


\begin{lem}
Let $A~:~\hilbert\to 2^{\hilbert}$ be maximally monotone and $\sigma$-strongly monotone with $\sigma>0$. Then $J_A=(\id+A)^{-1}$ is $(1+\sigma)$-cocoercive.
\label{lem:res_coco}
\end{lem}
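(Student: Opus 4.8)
The plan is to unfold the definition of the resolvent and reduce the claimed cocoercivity estimate directly to the strong monotonicity inequality for $A$. First I would note that since $A$ is maximally monotone, $J_A=(\id+A)^{-1}$ is a single-valued operator with full domain $\hilbert$ (Minty's theorem), so that the cocoercivity inequality of Definition~\ref{def:cocoercive} is meaningful for it; in fact, only plain monotonicity of $A$ is needed for single-valuedness, which is all the computation below uses.

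Next, fix $x,y\in\hilbert$ and set $p:=J_Ax$ and $q:=J_Ay$. By definition of the resolvent, $x\in p+Ap$ and $y\in q+Aq$, i.e. $x-p\in Ap$ and $y-q\in Aq$, so $(p,x-p)$ and $(q,y-q)$ belong to ${\rm{gph}}(A)$. Applying $\sigma$-strong monotonicity of $A$ (Definition~\ref{def:str_monotone}) to these two pairs gives
\begin{align*}
\langle (x-p)-(y-q),\,p-q\rangle\geq\sigma\|p-q\|^2 .
\end{align*}
Expanding the left-hand side yields $\langle x-y,\,p-q\rangle-\|p-q\|^2\geq\sigma\|p-q\|^2$, and hence $\langle x-y,\,p-q\rangle\geq(1+\sigma)\|p-q\|^2$. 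Substituting $p=J_Ax$ and $q=J_Ay$ and recalling $\sigma>0$ (so that $\tfrac{1}{1+\sigma}\in(0,1)$), this is exactly the statement that $J_A$ is $(1+\sigma)$-cocoercive in the sense of Definition~\ref{def:cocoercive}.

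I do not expect any real obstacle here: the argument is a one-line algebraic rearrangement of the strong monotonicity inequality once the resolvent is unfolded, and the only point requiring a word of care is that $J_A$ is well defined as a single-valued map with full domain, which follows from maximal monotonicity of $A$.
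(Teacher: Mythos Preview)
Your argument is correct: unfolding the resolvent and applying the strong monotonicity inequality to the pairs $(p,x-p),(q,y-q)\in{\rm{gph}}(A)$ gives exactly $\langle J_Ax-J_Ay,x-y\rangle\geq(1+\sigma)\|J_Ax-J_Ay\|^2$, and maximal monotonicity guarantees that $J_A$ is single-valued with full domain so the statement is well posed.

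The paper does not actually prove this lemma; it simply records it as a special case of \cite[Proposition~23.11]{bauschkeCVXanal}. Your direct computation is the standard elementary proof of that proposition, so there is no meaningful difference in approach---you have just written out what the cited reference contains.
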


\subsection{Negatively averaged operators}

In this section we define negatively averaged operators and show various
properties for these.
\begin{defn}
An operator $T~:~\hilbert\to\hilbert$ is $\theta$-negatively averaged with $\theta\in(0,1)$ if
$-T$ is $\theta$-averaged.
\end{defn}

This definition implies that an operator $T$ is $\theta$-negatively averaged if and only if
it satisfies
\begin{align*}
T = -((1-\theta)\id+\theta \bar{R})=(\theta-1)\id+\theta R
\end{align*}
where $\bar{R}$ is nonexpansive and $R:=-\bar{R}$ is therefore also
nonexpansive. Since $-T$ is averaged, it is also nonexpansive,
and so is $T$.

Since negatively averaged operators are nonexpansive, 
they can be averaged. 
\begin{defn}
An $\alpha$-averaged $\theta$-negatively averaged operator $S~:~\hilbert\to\hilbert$ is defined as
$S=(1-\alpha)\id+\alpha 
T$ where $T~:~\hilbert\to\hilbert$ is $\theta$-negatively averaged.
\end{defn}

Next, we show that averaged negatively averaged operators are contractive.
\begin{prop}
An $\alpha$-averaged $\theta$-negatively averaged operator $S~:~\hilbert\to\hilbert$ is
$|1-2\alpha+\alpha\theta|+\alpha\theta$-contractive.
\label{prp:avg_anti_avg_contr}
\end{prop}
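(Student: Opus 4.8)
The plan is to exploit the explicit structural representation of negatively averaged operators recorded immediately before the statement. Write $S=(1-\alpha)\id+\alpha T$ with $T$ $\theta$-negatively averaged, and substitute $T=(\theta-1)\id+\theta R$ where $R$ is nonexpansive. Collecting the $\id$-terms collapses $S$ into the single affine form
\[
S=(1-2\alpha+\alpha\theta)\,\id+\alpha\theta\, R .
\]

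With $S$ in this form, the Lipschitz estimate follows directly from the triangle inequality: for any $x,y\in\hilbert$,
\[
\|Sx-Sy\|\le|1-2\alpha+\alpha\theta|\,\|x-y\|+\alpha\theta\,\|Rx-Ry\|\le\bigl(|1-2\alpha+\alpha\theta|+\alpha\theta\bigr)\|x-y\|,
\]
where the last inequality uses nonexpansiveness of $R$. This already yields the claimed Lipschitz constant; it then remains only to verify that this constant lies in $[0,1)$, so that $S$ is genuinely a contraction.

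For that last point I would split on the sign of $1-2\alpha+\alpha\theta$. When it is nonnegative, the constant equals $1-2\alpha+2\alpha\theta=1-2\alpha(1-\theta)$, which is strictly less than $1$ because $\alpha>0$ and $\theta<1$. When it is negative, the constant equals $2\alpha-1$, which is strictly less than $1$ because $\alpha<1$. Nonnegativity of the constant is automatic since it is a sum of absolute values. Hence $S$ is $\bigl(|1-2\alpha+\alpha\theta|+\alpha\theta\bigr)$-contractive in all cases. There is essentially no difficult step here; the only thing requiring any care is this case analysis, and it is clean precisely because the bound is the maximum of the two linear pieces $1-2\alpha(1-\theta)$ and $2\alpha-1$, both manifestly below $1$ on $(0,1)\times(0,1)$.
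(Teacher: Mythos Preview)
Your proof is correct and follows essentially the same approach as the paper: substitute the structural form $T=(\theta-1)\id+\theta R$, collapse $S$ to $(1-2\alpha+\alpha\theta)\id+\alpha\theta R$, and apply the triangle inequality together with nonexpansiveness of $R$. The only difference is that you spell out the sign-based case analysis for the bound being strictly below $1$, whereas the paper simply declares this ``straightforward to verify.''
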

\begin{proof}
Let $T=(\theta-1)\id+\theta R$ (for some nonexpansive $R$) be the
$\theta$-negatively averaged operator, which implies that
$S=(1-\alpha)\id+\alpha T$. Then
\begin{align*}
\|Sx-Sy\|&=\|((1-\alpha)\id+\alpha T)x-((1-\alpha)\id+\alpha T)y\|\\
&= \|(1-2\alpha+\alpha\theta)(x-y)+\alpha\theta (Rx-Ry))\|\\
&\leq |1-2\alpha+\alpha\theta|\|x-y\|+\alpha\theta\|x-y\|\\
&= (|1-2\alpha+\alpha\theta|+\alpha\theta)\|x-y\|
\end{align*}
since $R$ is nonexpansive. It is straightforward to verify that
$|1-2\theta+\alpha\theta|+\alpha\theta<1$ for all combinations of
$\alpha\in(0,1)$ and $\theta\in(0,1)$. Hence, $S$ is contractive and
the proof is complete.
\end{proof}
Next, we optimize the contraction factor w.r.t. $\alpha$.
\begin{prop}
Assume that $T~:~\hilbert\to\hilbert$ is $\theta$-negatively averaged. Then the $\alpha$ that optimizes
the contraction factor for the $\alpha$-averaged $\theta$-negatively averaged
operator $S=(1-\alpha)\id+\alpha T$ is $\alpha = \tfrac{1}{2-\theta}$.
The corresponding optimal contraction constant is $\tfrac{\theta}{2-\theta}$.
\label{prp:avg_antiavg_optparam_subdiff}
\end{prop}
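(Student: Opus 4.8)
The plan is to minimize the contraction factor
$c(\alpha) = |1-2\alpha+\alpha\theta| + \alpha\theta$
from Proposition~\ref{prp:avg_anti_avg_contr} over $\alpha\in(0,1)$, treating $\theta\in(0,1)$ as fixed. First I would split the absolute value at the breakpoint where $1-2\alpha+\alpha\theta = 1 - \alpha(2-\theta) = 0$, i.e. at $\alpha_0 = \tfrac{1}{2-\theta}$. Note $\alpha_0\in(\tfrac12,1)$ since $2-\theta\in(1,2)$, so this breakpoint lies in the admissible interval.

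For $\alpha\le\alpha_0$ we have $1-\alpha(2-\theta)\ge 0$, so $c(\alpha) = 1-\alpha(2-\theta)+\alpha\theta = 1 - 2\alpha(1-\theta)$, which is strictly decreasing in $\alpha$ because $1-\theta>0$. For $\alpha\ge\alpha_0$ we have $c(\alpha) = \alpha(2-\theta)-1+\alpha\theta = 2\alpha-1$, which is strictly increasing in $\alpha$. Hence the piecewise-linear, convex function $c$ attains its unique minimum exactly at the kink $\alpha=\alpha_0=\tfrac{1}{2-\theta}$, and the optimal value is obtained by substituting into either branch: $c(\alpha_0) = 2\alpha_0 - 1 = \tfrac{2}{2-\theta}-1 = \tfrac{\theta}{2-\theta}$. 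This gives both claims.

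There is really no hard obstacle here; the only thing to be careful about is checking that the candidate minimizer $\tfrac{1}{2-\theta}$ genuinely lies in the open interval $(0,1)$ where the contraction bound of Proposition~\ref{prp:avg_anti_avg_contr} applies, and that the two one-sided linear pieces have slopes of opposite sign (negative then positive) so that the kink is a true minimum rather than a maximum or an inflection. Both are immediate from $\theta\in(0,1)$. One could write the whole argument in a single short display:
\begin{align*}
|1-2\alpha+\alpha\theta|+\alpha\theta
= \begin{cases} 1-2\alpha(1-\theta), & \alpha\le\tfrac{1}{2-\theta},\\[1mm] 2\alpha-1, & \alpha\ge\tfrac{1}{2-\theta}, \end{cases}
\end{align*}
and then read off that the first case decreases and the second increases, so the minimum is at $\alpha=\tfrac{1}{2-\theta}$ with value $\tfrac{\theta}{2-\theta}$.
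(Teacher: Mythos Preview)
Your proof is correct and follows essentially the same approach as the paper: split the absolute value at the kink $\alpha=\tfrac{1}{2-\theta}$, observe the two linear pieces have slopes $-2(1-\theta)<0$ and $2>0$, and conclude the minimum occurs at the kink with value $\tfrac{\theta}{2-\theta}$. The only cosmetic difference is that you explicitly frame $c$ as piecewise-linear convex, whereas the paper just reads off the signs of the slopes.
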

\begin{proof}
Due to the absolute value, Proposition~\ref{prp:avg_anti_avg_contr}
states that the contraction factor $\delta$ of $T$ can be written as
\begin{align*}
\delta&=\begin{cases}
1-2\alpha+\alpha\theta+\alpha\theta & {\hbox{if }} \alpha\leq
\tfrac{1}{2-\theta}\\
-(1-2\alpha+\alpha\theta)+\alpha\theta & {\hbox{if }} \alpha\geq
\tfrac{1}{2-\theta}\\
\end{cases}
=\begin{cases}
1-2(1-\theta)\alpha & {\hbox{if }} \alpha\leq
\tfrac{1}{2-\theta}\\
2\alpha-1 & {\hbox{if }} \alpha\geq
\tfrac{1}{2-\theta}
\end{cases}
\end{align*}
where the kink in the absolute value term is at
$\alpha=\tfrac{1}{2-\theta}$. Since $\theta\in(0,1)$, we get negative slope for
$\alpha\leq\tfrac{1}{2-\theta}$ and positive slope for
$\alpha\geq\tfrac{1}{2-\theta}$. Therefore the optimal $\alpha$ is in
the kink at $\alpha=\tfrac{1}{2-\theta}$, which satisfies $\alpha\in(\tfrac{1}{2},1)$ since
$\theta\in(0,1)$. Inserting this into the contraction factor
expression gives $\tfrac{\theta}{2-\theta}$. This concludes the proof.
\end{proof}
\begin{rem}
The optimal contraction factor $\tfrac{\theta}{2-\theta}$ is strictly increasing
in $\theta$ on the interval 
$\theta\in(0,1)$. Therefore the contraction factor becomes smaller the
smaller $\theta$ is.
\end{rem}

We conclude this section by showing that the composition of an
averaged and a negatively averaged
operator is negatively averaged. 
Before we state the result, we need a characterization of
$\theta$-negatively averaged operators $T$. This
follows directly from the definition of averaged operators in
\eqref{eq:averaged} since $-T$ is $\theta$-averaged:
\begin{align}
\tfrac{1-\theta}{\theta}\|(\id+T)x-(\id+T)y\|^2+\|Tx-Ty\|^2\leq\|x-y\|^2.
\label{eq:anti_averaged}
\end{align}

\begin{prop}
Assume that $T_{\theta}~:~\hilbert\to\hilbert$ is $\theta$-negatively averaged and $T_{\alpha}~:~\hilbert\to\hilbert$ is $\alpha$-averaged.
Then $T_{\theta}T_{\alpha}$ is $\tfrac{\kappa}{\kappa+1}$-negatively averaged where $\kappa
=\tfrac{\theta}{1-\theta}+\tfrac{\alpha}{1-\alpha}$.
\label{prp:avg_antiavg_comp}
\end{prop}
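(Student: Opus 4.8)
The plan is to verify the negative-averagedness characterization \eqref{eq:anti_averaged} directly. Set $\theta':=\tfrac{\kappa}{\kappa+1}$; since $\kappa>0$ we have $\theta'\in(0,1)$ and $\tfrac{1-\theta'}{\theta'}=\tfrac1\kappa$, so by \eqref{eq:anti_averaged} it suffices to show that $S:=T_{\theta}T_{\alpha}$ satisfies
\[
\tfrac1\kappa\|(\id+S)x-(\id+S)y\|^2+\|Sx-Sy\|^2\le\|x-y\|^2
\]
for all $x,y\in\hilbert$. To keep the algebra readable I would fix $x,y$ and abbreviate $a:=x-y$, $b:=T_{\alpha}x-T_{\alpha}y$, $c:=Sx-Sy$, together with $p:=\tfrac{1-\alpha}{\alpha}$ and $q:=\tfrac{1-\theta}{\theta}$, so that $\kappa=\tfrac1p+\tfrac1q=\tfrac{p+q}{pq}$ and the target inequality becomes $\tfrac{pq}{p+q}\|a+c\|^2+\|c\|^2\le\|a\|^2$.

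Next I would extract the two inequalities furnished by the hypotheses. Applying the averagedness characterization \eqref{eq:averaged} to $T_{\alpha}$ and using $(\id-T_{\alpha})x-(\id-T_{\alpha})y=a-b$ gives $p\|a-b\|^2+\|b\|^2\le\|a\|^2$. Applying the negative-averagedness characterization \eqref{eq:anti_averaged} to $T_{\theta}$ at the two points $T_{\alpha}x,T_{\alpha}y$ and using $(\id+T_{\theta})(T_{\alpha}x)-(\id+T_{\theta})(T_{\alpha}y)=b+c$ gives $q\|b+c\|^2+\|c\|^2\le\|b\|^2$. Adding these and cancelling $\|b\|^2$ yields
\[
p\|a-b\|^2+q\|b+c\|^2+\|c\|^2\le\|a\|^2,
\]
so the proof reduces to the elementary estimate $\tfrac{pq}{p+q}\|a+c\|^2\le p\|a-b\|^2+q\|b+c\|^2$.

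For this last estimate --- which I expect to be the only genuine content of the argument --- I would use the telescoping identity $a+c=(a-b)+(b+c)$. Writing $w:=a-b$ and $z:=b+c$, expanding $\|w+z\|^2=\|w\|^2+2\langle w,z\rangle+\|z\|^2$, and applying Young's inequality $2\langle w,z\rangle\le\tfrac pq\|w\|^2+\tfrac qp\|z\|^2$ gives $\|w+z\|^2\le(1+\tfrac pq)\|w\|^2+(1+\tfrac qp)\|z\|^2=\tfrac{p+q}{pq}\bigl(p\|w\|^2+q\|z\|^2\bigr)$, which is exactly the desired bound after multiplying through by $\tfrac{pq}{p+q}$. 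Combining it with the displayed inequality above, and translating $a,b,c,p,q$ back into the original data, establishes the claim. The main obstacle is simply recognizing the correct weight $p/q$ in Young's inequality; everything else is bookkeeping with the two averagedness characterizations.
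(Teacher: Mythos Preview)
Your argument is correct and coincides with the paper's proof: both telescope $a+c=(a-b)+(b+c)$ and bound $\|a+c\|^2$ by a weighted sum of $\|a-b\|^2$ and $\|b+c\|^2$, then combine with the two averagedness characterizations \eqref{eq:averaged} and \eqref{eq:anti_averaged}. The only cosmetic difference is that you invoke Young's inequality with weight $p/q$, while the paper phrases the same bound as convexity of $\|\cdot\|^2$ with parameter $t=\tfrac{\kappa_\alpha}{\kappa_\theta+\kappa_\alpha}$; in your notation $\kappa_\alpha=1/p$, $\kappa_\theta=1/q$, and the two inequalities are identical.
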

\begin{proof}
Let $\kappa_{\theta}=\tfrac{\theta}{1-\theta}$ and
$\kappa_{\alpha}=\tfrac{\alpha}{1-\alpha}$, then $\kappa =
\kappa_{\theta}+\kappa_{\alpha}$. We have
\begin{align}
\nonumber \|(\id+T_{\theta}T_{\alpha}&)x-(\id+T_{\theta} T_{\alpha})y\|^2\\
\nonumber&=\|(x-y)-(T_{\alpha} x-T_{\alpha} y)+(T_{\alpha}x-T_{\alpha}y)+(T_{\theta}T_{\alpha}x-T_{\theta}T_{\alpha}y)\|^2\\
\nonumber &=\|(\id-T_{\alpha})x-(\id-T_{\alpha})y+(\id+T_{\theta})T_{\alpha}x-(\id+T_{\theta})T_{\alpha}y\|^2\\
\nonumber
&\leq\tfrac{\kappa_{\theta}+\kappa_{\alpha}}{\kappa_{\alpha}}\|(\id-T_{\alpha})x-(\id-T_{\alpha})y\|^2\\
\nonumber&\quad+\tfrac{\kappa_{\theta}+\kappa_{\alpha}}{\kappa_{\theta}}\|(\id+T_{\theta})T_{\alpha}x-(\id+T_{\theta})T_{\alpha}y\|^2\\
\nonumber &\leq (\kappa_{\theta}+\kappa_{\alpha}) (\|x-y\|^2-\|T_{\alpha}x-T_{\alpha}y\|^2)\\
\nonumber &\quad+(\kappa_{\theta}+\kappa_{\alpha}) (\|T_{\alpha}x-T_{\alpha}y\|^2-\|T_{\theta}T_{\alpha}x-T_{\theta}T_{\alpha}y\|^2)\\
\nonumber &=(\kappa_{\theta}+\kappa_{\alpha})(\|x-y\|^2-\|T_{\theta}T_{\alpha}x-T_{\theta}T_{\alpha}y\|^2)\\
&=\kappa(\|x-y\|^2-\|T_{\theta}T_{\alpha}x-T_{\theta}T_{\alpha}y\|^2)
\label{eq:avg_antiavg_comp}
\end{align}
where the first inequality follows from convexity of $\|\cdot\|^2$.
More precisely, let $t\in[0,1]$, then, by convexity of $\|\cdot\|^2$, we conclude that
\begin{align*}
\|x+y\|^2&=\|t\tfrac{1}{t}x+(1-t)\tfrac{1}{1-t}y\|^2\leq t\|\tfrac{1}{t}x\|^2+(1-t)\tfrac{1}{1-t}\|y\|^2\\
&=\tfrac{1}{t}\|x\|^2+\tfrac{1}{1-t}\|y\|^2.
\end{align*}
Letting $t=\tfrac{\kappa_{\alpha}}{\kappa_{\theta}+\kappa_{\alpha}}\in[0,1]$,
which implies that $1-t=\tfrac{\kappa_{\theta}}{\kappa_{\theta}+\kappa_{\alpha}}\in[0,1]$, gives the
first inequality in \eqref{eq:avg_antiavg_comp}. The
second inequality in \eqref{eq:avg_antiavg_comp} follows from
\eqref{eq:averaged} and \eqref{eq:anti_averaged}. The relation in
\eqref{eq:avg_antiavg_comp} coincides with the definition of
negative averagedness in \eqref{eq:anti_averaged}. Thus $T_{\theta}T_{\alpha}$ is
$\phi$-negatively averaged
with $\phi$ satisfying
$\tfrac{1-\phi}{\phi}=\tfrac{1}{\kappa}$. This gives $\phi =
\tfrac{\kappa}{\kappa+1}$ and the proof is complete.
\end{proof}
\begin{rem}
This result can readily be extended to show averagedness of
$T=T_1T_2\cdots T_N$ where $T_i$ are $\alpha_i$-(negatively) averaged for
$i=1,\ldots,N$. We get that $T$ is
$\tfrac{\kappa}{1+\kappa}$-negatively averaged with
$\kappa=\sum_{i=1}^N\tfrac{\alpha_i}{1-\alpha_i}$ if an odd number of the $T_i$:s
are negatively averaged, and that $T$ is $\tfrac{\kappa}{1+\kappa}$-averaged
if an even number of the $T_i$ are negatively averaged.
Similar results have been and presented, e.g., in
\cite[Proposition~4.32]{bauschkeCVXanal} which is improved in \cite{Combettes201555}. Our result extends these results in that it allows also for
negatively averaged operators and reduces to the result in \cite{Combettes201555} for averaged operators.
\end{rem}

\section{Douglas-Rachford splitting}

Douglas-Rachford splitting can be applied to solve monotone inclusion
problems of the form
\begin{align}
0\in Ax+Bx
\label{eq:monotone_inclusion}
\end{align}
where $A,B~:~\hilbert\to 2^{\hilbert}$ are maximally monotone operators. The algorithm 
separates $A$ and $B$ by only touching the corresponding \emph{resolvents}, where the
resolvent $J_A~:~\hilbert\to\hilbert$ is defined as
\begin{align*}
J_{A} := (A+\id)^{-1}.
\end{align*}
The resolvent has full domain since $A$ is assumed maximally monotone, see \cite{minty1962} and \cite[Proposition~23.7]{bauschkeCVXanal}.
If $A=\partial f$ where $f$ is a proper, closed, and convex function,
then $J_A = {\rm{prox}}_{f}$ where the prox operator ${\rm{prox}}_{f}$ is defined as
\begin{align}
{\rm{prox}}_{f}(z) = \argmin_x\left\{f(x)+\tfrac{1}{2}\|x-z\|^2\right\}.
\label{eq:prox_def}
\end{align}
That this holds follows directly from Fermat's rule
\cite[Theorem~16.2]{bauschkeCVXanal} applied to 
the proximal operator definition.

The Douglas-Rachford algorithm is defined by the iteration
\begin{align}
z^{k+1} = ((1-\alpha)\id+\alpha R_AR_B)z^{k}
\label{eq:DR_alg}
\end{align}
where $\alpha\in(0,1)$ (we will see that also $\alpha\geq 1$ can sometimes
be used) and $R_A~:~\hilbert\to\hilbert$ is the \emph{reflected resolvent},
which is defined as 
\begin{align*}
R_A:=2J_A-\id.
\end{align*}
(Note that what is traditionally called 
Douglas-Rachford splitting is when $\alpha=1/2$ in \eqref{eq:DR_alg}. The case with
$\alpha=1$ in \eqref{eq:DR_alg} is often referred to as the Peaceman-Rachford
algorithm, see \cite{PeacemanRachford}. We 
will use the term Douglas-Rachford splitting for all feasible choices of  
$\alpha$.)

Since the reflected resolvent is nonexpansive in
the general case \cite[Corollary~23.10]{bauschkeCVXanal}, and since
compositions of nonexpansive operators
are nonexpansive, the Douglas-Rachford algorithm is an averaged
iteration of a nonexpansive mapping when $\alpha\in(0,1)$. Therefore,
Douglas-Rachford splitting is a special case of the Krasnosel'ski\u{\i}-Mann
iteration \cite{Mann_1953,Krasnoselskii_1955}, which is known to converge to a fixed-point of the
nonexpansive operator, in this case $R_AR_B$, see \cite[Theorem~5.14]{bauschkeCVXanal}. Since an $x\in\hilbert$
solves \eqref{eq:monotone_inclusion} if and only if $x=J_Az$ where
$z=R_AR_Bz$, see \cite[Proposition~25.1]{bauschkeCVXanal} this
algorithm can be used to solve monotone inclusion problems of the form
\eqref{eq:monotone_inclusion}.
Note that to solve \eqref{eq:monotone_inclusion}, is equivalent to solving
\begin{align*}
0\in\gamma Ax+\gamma Bx
\end{align*}
for any $\gamma\in(0,\infty)$. Then we can define $A_{\gamma} =
\gamma A$ and \eqref{eq:monotone_inclusion} 
can also be solved by the iteration
\begin{align}
z^{k+1} = ((1-\alpha)\id+\alpha R_{A_\gamma}R_{B_\gamma})z^{k}.
\label{eq:DR_alg_gen}
\end{align}
Therefore, $\gamma$ is an algorithm parameter that affects the
progress of the iterations.

The objective of this paper, is to provide tight linear convergence rate bounds for the Douglas-Rachford algorithm under various assumptions. Using these bounds, we will show how to select the algorithm parameters $\gamma$ and $\alpha$ that optimize these bounds. The first setting we consider is when $A$ is strongly monotone and $B$ is cocoercive.

\section{$A$ strongly monotone and $B$ cocoercive}
\label{sec:split_prop}

In this section, we show linear convergence for Douglas-Rachford splitting in the case where $A$ and $B$ are maximally monotone, $A$ is strongly monotone, and $B$ is cocoercive. That is, we make the following assumptions.
\begin{ass}
Suppose that:
\begin{enumerate}[(i)]
\item $A~:~\hilbert\to 2^{\hilbert}$ is maximally monotone and $\sigma$-strongly monotone.
\item $B~:~\hilbert\to\hilbert$ is maximally monotone and $\tfrac{1}{\beta}$-cocoercive.
\end{enumerate}
\label{ass:subdiff}
\end{ass}

Before we can state the main linear convergence result, we need to characterize the properties of the resolvent, the reflected resolvent, and the composition between reflected resolvents. This is done in the following series of propositions, this first of which is proven in Appendix~\ref{app:pf_split_prop}.

\begin{prop}
The resolvent $J_B$ of a $\tfrac{1}{\beta}$-cocoercive 
operator $B~:~\hilbert\to\hilbert$ is
$\tfrac{\beta}{2(1+\beta)}$-averaged.
\label{prp:res_prop_lipschitz_subdiff}
\end{prop}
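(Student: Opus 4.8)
The plan is to verify the averagedness characterization \eqref{eq:averaged} directly for the map $T=J_B$ with the candidate parameter $\alpha=\tfrac{\beta}{2(1+\beta)}$. First I would record that $J_B$ is globally well defined: $\tfrac1\beta$-cocoercivity of $B$ forces, via Cauchy--Schwarz, that $B$ is $\beta$-Lipschitz continuous, hence continuous and monotone with full domain, hence maximally monotone, so $J_B=(\id+B)^{-1}$ has full domain. The structural fact that makes the computation work is the resolvent identity $\id-J_B=B\circ J_B$: writing $u=J_Bx$ and $v=J_By$, the defining relation $x\in u+Bu$ forces $x-u=Bu$ since $B$ is single valued, so that $(\id-J_B)x-(\id-J_B)y=Bu-Bv$ and $x-y=(u-v)+(Bu-Bv)$.

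Next I would substitute these expressions into \eqref{eq:averaged}. Expanding $\|x-y\|^2=\|u-v\|^2+2\langle u-v,Bu-Bv\rangle+\|Bu-Bv\|^2$ and cancelling the common term $\|u-v\|^2$, the asserted $\alpha$-averagedness is seen to be equivalent to
\[
\tfrac{1-2\alpha}{\alpha}\,\|Bu-Bv\|^2\;\le\;2\langle u-v,\,Bu-Bv\rangle .
\]
By $\tfrac1\beta$-cocoercivity (Definition~\ref{def:cocoercive}) the right-hand side is at least $\tfrac2\beta\|Bu-Bv\|^2$, so it suffices that $\tfrac{1-2\alpha}{\alpha}\le\tfrac2\beta$; solving $\tfrac{1-2\alpha}{\alpha}=\tfrac2\beta$ yields exactly $\alpha=\tfrac{\beta}{2(1+\beta)}$, which moreover lies in $(0,\tfrac12)\subset(0,1)$ as required by Definition~\ref{def:averaged}. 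This closes the argument.

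A shorter route, which I would present as an alternative or a cross-check, uses the recorded lemmas together with the duality $J_B=\id-J_{B^{-1}}$. Since $B$ is $\tfrac1\beta$-cocoercive and maximally monotone, $B^{-1}$ is maximally monotone and $\tfrac1\beta$-strongly monotone, so Lemma~\ref{lem:res_coco} gives that $J_{B^{-1}}$ is $(1+\tfrac1\beta)$-cocoercive, i.e.\ $\tfrac1{\beta'}$-cocoercive with $\beta'=\tfrac{\beta}{1+\beta}\in(0,1)$; Lemma~\ref{lem:coco_averaged2} then shows that $\id-J_{B^{-1}}=J_B$ is $\tfrac{\beta'}{2}=\tfrac{\beta}{2(1+\beta)}$-averaged. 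Either way, the arithmetic is routine; the only points needing care are checking that $J_B$ has full domain and spotting the identity $\id-J_B=B\circ J_B$ (equivalently $J_B=\id-J_{B^{-1}}$), which is what lets the cocoercivity inequality be applied cleanly. The genuine content is in pinning down the precise value of $\alpha$ — and, for the subsequent propositions, in showing it cannot be improved — rather than in any delicate estimate.
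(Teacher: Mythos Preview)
Your primary argument is correct and is essentially the paper's own proof run in reverse: the paper starts from the cocoercivity inequality for $B$, adds $\|u-v\|^2$, changes variables via $x=(\id+B)u$, $y=(\id+B)v$, and rearranges into the $\tfrac{\beta}{2(1+\beta)}$-averagedness characterization, while you start from \eqref{eq:averaged} and reduce to cocoercivity --- the same substitution and the same cancellations. Your alternative via $J_B=\id-J_{B^{-1}}$ together with Lemma~\ref{lem:res_coco} and Lemma~\ref{lem:coco_averaged2} is a genuinely different route that the paper does not take; it replaces the explicit expansion by two black-box invocations of recorded lemmas, which is shorter but less self-contained.
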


This implies that also the reflected resolvent is averaged.
\begin{prop}
The reflected resolvent of a $\tfrac{1}{\beta}$-cocoercive
operator $B~:~\hilbert\to\hilbert$ is
$\tfrac{\beta}{1+\beta}$-averaged. 
\label{prp:refl_res_prop_lipschitz_subdiff}
\end{prop}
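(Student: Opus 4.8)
The plan is to deduce this directly from the previous proposition together with Lemma~\ref{lem:res_reflres_ave}. By Proposition~\ref{prp:res_prop_lipschitz_subdiff}, the resolvent $J_B$ is $\tfrac{\beta}{2(1+\beta)}$-averaged. Writing $\alpha_0 := \tfrac{\beta}{2(1+\beta)}$, I first observe that $\alpha_0 \in (0,\tfrac{1}{2})$: indeed $\tfrac{\beta}{1+\beta} \in (0,1)$ for every $\beta > 0$, so halving it lands strictly inside $(0,\tfrac12)$. This is exactly the hypothesis needed to invoke Lemma~\ref{lem:res_reflres_ave}, which states that if $T$ is $\alpha$-averaged with $\alpha \in (0,\tfrac12)$, then $2T - \id$ is $2\alpha$-averaged.

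Next I apply that lemma with $T = J_B$ and $\alpha = \alpha_0$. The reflected resolvent is by definition $R_B = 2J_B - \id$, so the lemma yields that $R_B$ is $2\alpha_0$-averaged, and $2\alpha_0 = 2\cdot\tfrac{\beta}{2(1+\beta)} = \tfrac{\beta}{1+\beta}$, which is precisely the claimed averagedness constant. Since $\tfrac{\beta}{1+\beta} \in (0,1)$, this is a legitimate averagedness parameter, so the statement is well-posed.

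There is essentially no obstacle here; the only point requiring a line of care is verifying that the averagedness constant of $J_B$ really does fall in $(0,\tfrac12)$ so that Lemma~\ref{lem:res_reflres_ave} applies — but this holds for the full admissible range $\beta > 0$, so no case distinction or boundary argument is needed. Everything else is substitution of the explicit constant.
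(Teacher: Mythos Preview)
Your proposal is correct and follows exactly the same route as the paper: apply Proposition~\ref{prp:res_prop_lipschitz_subdiff} to get $\tfrac{\beta}{2(1+\beta)}$-averagedness of $J_B$, then Lemma~\ref{lem:res_reflres_ave} to double the constant for $R_B = 2J_B - \id$. The paper states this in one line, while you have simply spelled out the verification that $\alpha_0 \in (0,\tfrac12)$ and the arithmetic $2\alpha_0 = \tfrac{\beta}{1+\beta}$.
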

\begin{proof}
This follows directly from the
Proposition~\ref{prp:res_prop_lipschitz_subdiff} and
Lemma~\ref{lem:res_reflres_ave}.
\end{proof}
If the operator instead is strongly monotone, the reflected resolvent is negatively averaged.
\begin{prop}
  The reflected resolvent of a $\sigma$-strongly monotone and maximal monotone operator
$A~:~\hilbert\to 2^\hilbert$ is $\tfrac{1}{1+\sigma}$-negatively averaged.
\label{prp:refl_res_prop_strmono_subdiff}
\end{prop}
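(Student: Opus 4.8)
## Proof proposal for Proposition~\ref{prp:refl_res_prop_strmono_subdiff}

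The plan is to mirror the structure used for the cocoercive case, but now exploiting strong monotonicity instead. First I would invoke Lemma~\ref{lem:res_coco}: since $A$ is maximally monotone and $\sigma$-strongly monotone, the resolvent $J_A = (\id+A)^{-1}$ is $(1+\sigma)$-cocoercive. The goal is to conclude that the reflected resolvent $R_A = 2J_A - \id$ is $\tfrac{1}{1+\sigma}$-negatively averaged, i.e.\ that $-R_A = \id - 2J_A$ is $\tfrac{1}{1+\sigma}$-averaged.

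The natural route is through Lemma~\ref{lem:coco_averaged2}, which says that if $T$ is $\tfrac{1}{\beta}$-cocoercive with $\beta\in(0,1)$, then $\id-T$ is $\tfrac{\beta}{2}$-averaged. I would like to apply this with $T = 2J_A$: note that $J_A$ being $(1+\sigma)$-cocoercive means $\langle J_Ax - J_Ay, x-y\rangle \ge (1+\sigma)\|J_Ax-J_Ay\|^2$, which rearranges to $\langle 2J_Ax - 2J_Ay, x-y\rangle \ge \tfrac{1+\sigma}{2}\|2J_Ax - 2J_Ay\|^2$, so $2J_A$ is $\tfrac{2}{1+\sigma}$-cocoercive. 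Writing $\beta := \tfrac{2}{1+\sigma}$, since $\sigma>0$ we have $\beta\in(0,2)$; when $\sigma > 1$ this gives $\beta\in(0,1)$ and Lemma~\ref{lem:coco_averaged2} applies directly to yield that $\id - 2J_A$ is $\tfrac{\beta}{2} = \tfrac{1}{1+\sigma}$-averaged, which is exactly the claim.

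The main obstacle is the regime $\sigma \le 1$, where $\beta = \tfrac{2}{1+\sigma}\ge 1$ and Lemma~\ref{lem:coco_averaged2} does not apply as stated. I would handle this by arguing directly from the cocoercivity inequality for $J_A$: for any $x,y$, setting $u = J_Ax - J_Ay$ and $d = x-y$, and writing $a := (\id-2J_A)x - (\id - 2J_A)y = d - 2u$, one computes $\|a\|^2 = \|d\|^2 - 4\langle u, d\rangle + 4\|u\|^2 \le \|d\|^2 - 4(1+\sigma)\|u\|^2 + 4\|u\|^2 = \|d\|^2 - 4\sigma\|u\|^2$. To verify $\tfrac{1}{1+\sigma}$-averagedness of $\id - 2J_A$ via characterization \eqref{eq:averaged}, I need $\tfrac{\sigma}{1/(1+\sigma)}\,\|(\id-(\id-2J_A))x-(\id-(\id-2J_A))y\|^2 + \|a\|^2 \le \|d\|^2$, i.e.\ (with $\alpha = \tfrac{1}{1+\sigma}$, so $\tfrac{1-\alpha}{\alpha} = \sigma$) that $\sigma\|2u\|^2 + \|a\|^2 \le \|d\|^2$, i.e.\ $4\sigma\|u\|^2 + \|a\|^2 \le \|d\|^2$ — which is precisely the bound just derived. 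So the direct computation covers all $\sigma>0$ uniformly, and I would present that computation as the proof, perhaps remarking that it also recovers Lemma~\ref{lem:coco_averaged2} in the overlapping range. I should double-check the algebra $\|d-2u\|^2 = \|d\|^2 - 4\langle d,u\rangle + 4\|u\|^2$ and the sign in applying $\langle u,d\rangle \ge (1+\sigma)\|u\|^2$, but both are routine.
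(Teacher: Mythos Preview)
Your direct computation is correct and establishes the claim for all $\sigma>0$. However, the paper avoids the detour through the failed $2J_A$ attempt by applying Lemma~\ref{lem:coco_averaged2} to $J_A$ itself rather than to $2J_A$: since $J_A$ is $(1+\sigma)$-cocoercive, i.e.\ $\tfrac{1}{\beta}$-cocoercive with $\beta=\tfrac{1}{1+\sigma}\in(0,1)$ for every $\sigma>0$, Lemma~\ref{lem:coco_averaged2} gives that $\id-J_A$ is $\tfrac{1}{2(1+\sigma)}$-averaged; then, because $\tfrac{1}{2(1+\sigma)}<\tfrac{1}{2}$, Lemma~\ref{lem:res_reflres_ave} yields that $2(\id-J_A)-\id=\id-2J_A=-R_A$ is $\tfrac{1}{1+\sigma}$-averaged. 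This two-lemma chain works uniformly in $\sigma$ and genuinely mirrors the cocoercive case (Proposition~\ref{prp:refl_res_prop_lipschitz_subdiff}), which was your stated plan. Your approach trades this structural parallel for a self-contained verification of the averagedness inequality~\eqref{eq:averaged}; that is perfectly valid and arguably more elementary, but the initial discussion of the $\sigma>1$ versus $\sigma\le 1$ split is then unnecessary scaffolding and could be dropped from the final write-up.
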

\begin{proof}
From Lemma~\ref{lem:res_coco}, we have that
the resolvent $J_A$ is $(1+\sigma)$-cocoercive. Using Lemma~\ref{lem:coco_averaged2}, this implies that $\id-J_A$ is $\tfrac{1}{2(1+\sigma)}$-averaged. Then using Lemma~\ref{lem:res_reflres_ave}, this implies that $2(\id-J_A)-\id=\id-2J_A=-R_A$ is $\tfrac{1}{1+\sigma}$-averaged, hence $R_A$ is $\tfrac{1}{1+\sigma}$-negatively averaged. This completes the proof.
\end{proof}
The composition of the reflected resolvents of a
strongly monotone operator and a cocoercive operator
is negatively averaged.
\begin{prop}
Suppose that Assumption~\ref{ass:subdiff} holds. Then, the composition $R_AR_B$ is
$\tfrac{\tfrac{1}{\sigma}+\beta}{1+\tfrac{1}{\sigma}+\beta}$-negatively
averaged.
\label{prp:comp_refl_res_anti_averaged_subdiff}
\end{prop}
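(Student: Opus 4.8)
The plan is to combine the averagedness/negative-averagedness characterizations of the two reflected resolvents with the composition rule for such operators, namely Proposition~\ref{prp:avg_antiavg_comp}. From Proposition~\ref{prp:refl_res_prop_strmono_subdiff}, the reflected resolvent $R_A$ of the $\sigma$-strongly monotone maximal monotone operator $A$ is $\theta$-negatively averaged with $\theta = \tfrac{1}{1+\sigma}$. From Proposition~\ref{prp:refl_res_prop_lipschitz_subdiff}, the reflected resolvent $R_B$ of the $\tfrac{1}{\beta}$-cocoercive operator $B$ is $\alpha$-averaged with $\alpha = \tfrac{\beta}{1+\beta}$. Proposition~\ref{prp:avg_antiavg_comp} then applies directly with $T_\theta = R_A$ and $T_\alpha = R_B$, and tells us that $R_AR_B$ is $\tfrac{\kappa}{1+\kappa}$-negatively averaged where $\kappa = \tfrac{\theta}{1-\theta} + \tfrac{\alpha}{1-\alpha}$.

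The only work remaining is the arithmetic: compute $\kappa$. With $\theta = \tfrac{1}{1+\sigma}$ we get $\tfrac{\theta}{1-\theta} = \tfrac{1/(1+\sigma)}{\sigma/(1+\sigma)} = \tfrac{1}{\sigma}$, and with $\alpha = \tfrac{\beta}{1+\beta}$ we get $\tfrac{\alpha}{1-\alpha} = \tfrac{\beta/(1+\beta)}{1/(1+\beta)} = \beta$. Hence $\kappa = \tfrac{1}{\sigma} + \beta$, and therefore $R_AR_B$ is $\tfrac{\kappa}{1+\kappa}$-negatively averaged with $\tfrac{\kappa}{1+\kappa} = \tfrac{\tfrac{1}{\sigma}+\beta}{1+\tfrac{1}{\sigma}+\beta}$, which is exactly the claimed constant.

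One small point to check before invoking Proposition~\ref{prp:avg_antiavg_comp} is that its hypotheses are genuinely met: $\theta = \tfrac{1}{1+\sigma} \in (0,1)$ since $\sigma > 0$, and $\alpha = \tfrac{\beta}{1+\beta} \in (0,1)$ since $\beta > 0$, so both reflected resolvents fall in the admissible parameter ranges. There is no real obstacle here — the proposition is essentially an immediate corollary of the three earlier propositions plus the composition lemma, with the "difficulty" being entirely bookkeeping of the parameters. I would therefore write the proof as: cite Propositions~\ref{prp:refl_res_prop_strmono_subdiff} and \ref{prp:refl_res_prop_lipschitz_subdiff} for the two constants, apply Proposition~\ref{prp:avg_antiavg_comp}, and display the one-line computation of $\kappa$.
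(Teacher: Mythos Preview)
Your proposal is correct and follows essentially the same approach as the paper's own proof: cite Propositions~\ref{prp:refl_res_prop_strmono_subdiff} and~\ref{prp:refl_res_prop_lipschitz_subdiff} for the negative-averagedness and averagedness parameters of $R_A$ and $R_B$, apply the composition result Proposition~\ref{prp:avg_antiavg_comp}, and compute $\kappa = \tfrac{1}{\sigma}+\beta$. Your extra remark verifying that $\theta,\alpha\in(0,1)$ so that Proposition~\ref{prp:avg_antiavg_comp} applies is a harmless addition the paper omits.
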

\begin{proof}
Since $R_A$ is $\tfrac{1}{1+\sigma}$-negatively averaged and $R_B$ is
$\tfrac{\beta}{1+\beta}$-averaged, see
Propositions~\ref{prp:refl_res_prop_lipschitz_subdiff} and
\ref{prp:refl_res_prop_strmono_subdiff}, we can apply
Proposition~\ref{prp:avg_antiavg_comp}. We get that 
$\kappa = 
\tfrac{\tfrac{1}{1+\sigma}}{1-\tfrac{1}{1+\sigma}}+\tfrac{\tfrac{\beta}{1+\beta}}{1-\tfrac{\beta}{1+\beta}}=\tfrac{1}{\sigma}+\beta$
and that the averagedness parameter of the negatively averaged operator $R_AR_B$ is given by
$\tfrac{\kappa}{\kappa+1}=\tfrac{\tfrac{1}{\sigma}+\beta}{\tfrac{1}{\sigma}+\beta+1}$.
This concludes the proof.
\end{proof}

With these results, we can now show the following linear convergence rate bounds for Douglas-Rachford splitting under Assumption~\ref{ass:subdiff}. The theorem is proven in Appendix~\ref{app:pf_split_prop}.
\begin{thm}
Suppose that Assumption~\ref{ass:subdiff} holds, that $\alpha\in(0,1)$, that $\gamma\in(0,\infty)$, and that the Douglas-Rachford algorithm \eqref{eq:DR_alg} is applied to solve $0\in \gamma Ax+\gamma Bx$. Then the algorithm converges at least with rate factor
\begin{align}
\left|1-2\alpha+\alpha\tfrac{\tfrac{1}{\gamma\sigma}+\gamma\beta}{1+\tfrac{1}{\gamma\sigma}+\gamma\beta}\right|+\alpha\tfrac{\tfrac{1}{\gamma\sigma}+\gamma\beta}{1+\tfrac{1}{\gamma\sigma}+\gamma\beta}.
\label{eq:DR_rate_bound_AB_prop_subdiff}
\end{align}
Optimizing this rate bound w.r.t. $\alpha$ and $\gamma$ gives $\gamma
= \tfrac{1}{\sqrt{\beta\sigma}}$ and
$\alpha=\tfrac{\sqrt{\beta/\sigma}+1/2}{1+\sqrt{\beta/\sigma}}$. The
corresponding optimal rate bound is
$\tfrac{\sqrt{\beta/\sigma}}{\sqrt{\beta/\sigma}+1}$. 
\label{thm:rate_subdiff}
\end{thm}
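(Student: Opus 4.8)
The plan is to reduce Theorem~\ref{thm:rate_subdiff} to the machinery already assembled, so the only real work is a scaling step and a finite-dimensional optimization. First I would note that solving $0\in\gamma Ax+\gamma Bx$ means applying Douglas--Rachford with the operators $A_\gamma=\gamma A$ and $B_\gamma=\gamma B$. A one-line check shows $\gamma A$ is $\gamma\sigma$-strongly monotone and $\gamma B$ is $\tfrac{1}{\gamma\beta}$-cocoercive (because $\langle \gamma Bx-\gamma By,x-y\rangle=\gamma\langle Bx-By,x-y\rangle\geq \tfrac{\gamma}{\beta}\|Bx-By\|^2=\tfrac{1}{\gamma\beta}\|\gamma Bx-\gamma By\|^2$), and both remain maximally monotone. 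Hence Assumption~\ref{ass:subdiff} holds for $(A_\gamma,B_\gamma)$ with constants $\sigma\mapsto\gamma\sigma$, $\beta\mapsto\gamma\beta$.

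Next I would invoke Proposition~\ref{prp:comp_refl_res_anti_averaged_subdiff} applied to $(A_\gamma,B_\gamma)$: the composition $R_{A_\gamma}R_{B_\gamma}$ is $\theta$-negatively averaged with $\theta=\tfrac{\tfrac{1}{\gamma\sigma}+\gamma\beta}{1+\tfrac{1}{\gamma\sigma}+\gamma\beta}$. The Douglas--Rachford operator in \eqref{eq:DR_alg_gen} is then exactly an $\alpha$-averaged $\theta$-negatively averaged operator, so Proposition~\ref{prp:avg_anti_avg_contr} gives that it is $\bigl(|1-2\alpha+\alpha\theta|+\alpha\theta\bigr)$-contractive; substituting the value of $\theta$ yields \eqref{eq:DR_rate_bound_AB_prop_subdiff}, and contractivity of the DR operator $z\mapsto z^{k+1}$ immediately gives linear convergence of $\{z^k\}$ at that rate (and of $\{J_{A_\gamma}z^k\}$ to the solution, by nonexpansiveness of the resolvent).

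For the parameter optimization, I would proceed in two stages. Fix $\gamma$ first: by Proposition~\ref{prp:avg_antiavg_optparam_subdiff}, for a fixed $\theta$-negatively averaged operator the optimal averaging parameter is $\alpha=\tfrac{1}{2-\theta}$ with optimal contraction factor $\tfrac{\theta}{2-\theta}$. Since by the remark after that proposition $\tfrac{\theta}{2-\theta}$ is increasing in $\theta$, and $\theta=\tfrac{\kappa}{\kappa+1}$ is increasing in $\kappa=\tfrac{1}{\gamma\sigma}+\gamma\beta$, minimizing the rate over $\gamma$ amounts to minimizing $\kappa(\gamma)=\tfrac{1}{\gamma\sigma}+\gamma\beta$ over $\gamma\in(0,\infty)$. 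This is a one-variable convex problem; setting the derivative $-\tfrac{1}{\gamma^2\sigma}+\beta=0$ gives $\gamma=\tfrac{1}{\sqrt{\beta\sigma}}$, whence $\kappa=2\sqrt{\beta/\sigma}$ and $\theta=\tfrac{2\sqrt{\beta/\sigma}}{1+2\sqrt{\beta/\sigma}}$. Back-substituting into $\alpha=\tfrac{1}{2-\theta}$ gives $\alpha=\tfrac{1+2\sqrt{\beta/\sigma}}{2+2\sqrt{\beta/\sigma}}=\tfrac{\sqrt{\beta/\sigma}+1/2}{1+\sqrt{\beta/\sigma}}$, and into $\tfrac{\theta}{2-\theta}$ gives the optimal rate $\tfrac{\sqrt{\beta/\sigma}}{\sqrt{\beta/\sigma}+1}$, which also confirms $\alpha\in(0,1)$ so the hypothesis of the theorem is met.

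The only mild subtlety — not really an obstacle — is justifying that the joint optimization over $(\alpha,\gamma)$ decouples in this order: one must observe that for every fixed $\gamma$ the inner minimization over $\alpha$ yields the monotone-in-$\theta$ value $\tfrac{\theta(\gamma)}{2-\theta(\gamma)}$, so no interaction term survives and the outer problem is the scalar minimization of $\kappa(\gamma)$. Everything else is routine algebra, and no new inequality needs to be proven beyond what Propositions~\ref{prp:avg_anti_avg_contr}--\ref{prp:comp_refl_res_anti_averaged_subdiff} already supply.
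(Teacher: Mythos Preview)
Your proposal is correct and follows essentially the same route as the paper: apply Proposition~\ref{prp:comp_refl_res_anti_averaged_subdiff} (with the scaled constants $\gamma\sigma$ and $\gamma\beta$) to obtain the negative-averagedness parameter $\theta$, then Proposition~\ref{prp:avg_anti_avg_contr} for the rate, and finally Proposition~\ref{prp:avg_antiavg_optparam_subdiff} together with the minimization of $\kappa(\gamma)=\tfrac{1}{\gamma\sigma}+\gamma\beta$ for the optimal parameters. Your explicit verification of the scaling and of the decoupling of the $(\alpha,\gamma)$ optimization is slightly more detailed than the paper's proof but adds nothing new.
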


\subsection{Tightness}

In this section, we present an example that shows tightness of the linear convergence rate bounds in Theorem~\ref{thm:rate_subdiff} for many algorithm parameters. We consider a two dimensional Euclidean example, which is given by the following convex optimization problem:
\begin{align}
\begin{tabular}{ll}
minimize & $f(x)+f^*(x)$
\end{tabular}
\label{eq:tight_example_subdiff}
\end{align}
where 
\begin{align}
f(x)=\tfrac{\beta}{2}x_1^2,
\label{eq:f_def}
\end{align} 
and $x=(x_1,x_2)$, and $\beta>0$. The gradient $\nabla f=\beta x_1$, so it is cocoercive with factor $\tfrac{1}{\beta}$. According to \cite[Theorem~18.15]{bauschkeCVXanal} this is equivalent to that $f^*$ is $\tfrac{1}{\beta}$-strongly convex and therefore $\partial f^*$ is $\sigma:=\tfrac{1}{\beta}$-strongly monotone.

The following proposition shows that when solving \eqref{eq:tight_example_subdiff} with $f$ defined in \eqref{eq:f_def} using Douglas-Rachford splitting, the upper linear convergence rate bound is exactly attained. The result is proven in Appendix~\ref{app:pf_split_prop}.
\begin{prop}
Suppose that the Douglas-Rachford algorithm \eqref{eq:DR_alg} is applied to solve \eqref{eq:tight_example_subdiff} with $f$ in \eqref{eq:f_def}. Further suppose that the parameters $\gamma$ and $\alpha$ satisfy $\gamma\in(0,\infty)$ and $\alpha\in[c,1)$ where $c=\tfrac{1+\gamma\sigma+\gamma^2\sigma\beta}{1+2\gamma\sigma+\gamma^2\sigma\beta}$ and that $z^0=(0,z_2^0)$ with $z_2^0\neq 0$. Then the $z^k$ sequence in \eqref{eq:DR_alg} converges exactly with rate \eqref{eq:DR_rate_bound_AB_prop_subdiff} in Theorem~\ref{thm:rate_subdiff}.
\label{prp:tight_split_prop}
\end{prop}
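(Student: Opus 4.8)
The plan is to exploit the fact that the example is completely separable. Since $f(x)=\tfrac{\beta}{2}x_1^2$ depends only on $x_1$, its conjugate is $f^*(y)=\tfrac{1}{2\beta}y_1^2+\iota_{\{y_2=0\}}(y)$, and both operators $A:=\partial f^*$ (which is $\sigma=\tfrac1\beta$-strongly monotone) and $B:=\nabla f$ (which is $\tfrac1\beta$-cocoercive) act coordinatewise on $\hilbert=\reals^2$. Hence every resolvent, every reflected resolvent, and the whole Douglas-Rachford operator $S:=(1-\alpha)\id+\alpha R_{\gamma A}R_{\gamma B}$ used to solve $0\in\gamma Ax+\gamma Bx$ is diagonal, and the iteration decouples into two scalar recursions. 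So the first step is to compute the two scalar operators explicitly. In the $x_1$-direction, $J_{\gamma A}$ and $J_{\gamma B}$ are the scalar multiplications by $\tfrac{\beta}{\beta+\gamma}$ and $\tfrac{1}{1+\gamma\beta}$, so $R_{\gamma A}R_{\gamma B}$ is multiplication by $\mu:=\tfrac{\beta-\gamma}{\beta+\gamma}\cdot\tfrac{1-\gamma\beta}{1+\gamma\beta}$, with $|\mu|<1$. In the $x_2$-direction, $\gamma f^*$ restricts to the indicator of $\{0\}$ and $\gamma f$ to the zero function, so $J_{\gamma A}=0$, $R_{\gamma A}=-\id$, $J_{\gamma B}=\id$, $R_{\gamma B}=\id$, and $R_{\gamma A}R_{\gamma B}=-\id$ there.

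The second step is to read off the iteration. On the first coordinate $S$ is multiplication by $1-\alpha+\alpha\mu$ and on the second coordinate by $1-2\alpha$, so the unique fixed point is $z^\star=0$. Starting from $z^0=(0,z_2^0)$ the first coordinate stays at $0$ for all $k$, whence $z^k=(0,(1-2\alpha)^kz_2^0)$ and $\|z^k-z^\star\|=|1-2\alpha|^k|z_2^0|$ exactly. Since $z_2^0\ne0$, the sequence converges linearly with rate factor precisely $|1-2\alpha|$.

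The third step is to identify $|1-2\alpha|$ with the bound \eqref{eq:DR_rate_bound_AB_prop_subdiff} on the range $\alpha\in[c,1)$. Let $\theta:=\tfrac{\frac1{\gamma\sigma}+\gamma\beta}{1+\frac1{\gamma\sigma}+\gamma\beta}$ be the negative-averagedness constant of $R_{\gamma A}R_{\gamma B}$ given by Proposition~\ref{prp:comp_refl_res_anti_averaged_subdiff} (with $A,B$ replaced by $\gamma A,\gamma B$). Clearing denominators gives $\tfrac1{2-\theta}=\tfrac{1+\gamma\sigma+\gamma^2\sigma\beta}{1+2\gamma\sigma+\gamma^2\sigma\beta}=c$, and one checks $c>\tfrac12$. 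Hence for $\alpha\in[c,1)$ we are on the branch $\alpha\ge\tfrac1{2-\theta}$ analyzed in the proof of Proposition~\ref{prp:avg_antiavg_optparam_subdiff}, where $|1-2\alpha+\alpha\theta|+\alpha\theta=2\alpha-1=|1-2\alpha|$. This equals the rate observed above, so the upper bound of Theorem~\ref{thm:rate_subdiff} is attained with equality. The $x_1$-direction needs no attention: $|1-\alpha+\alpha\mu|$ automatically respects the bound because Theorem~\ref{thm:rate_subdiff} applies to this very problem, and the prescribed initialization makes that coordinate identically zero anyway.

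I do not expect a genuine obstacle: everything reduces to elementary scalar computations together with the algebraic identity $c=\tfrac1{2-\theta}$. The only point requiring a little care is getting the $x_2$-direction right, namely recognizing that there $\gamma f^*$ collapses to $\iota_{\{0\}}$ and $\gamma f$ to $0$, so that $R_{\gamma A}R_{\gamma B}$ is exactly $-\id$. This is the degenerate ``infinitely strongly monotone versus zero'' configuration that saturates the triangle inequality in the proof of Proposition~\ref{prp:avg_anti_avg_contr}, which is precisely why the example is tight — and also why tightness is only claimed for $\alpha\ge c$, since on the other branch the bound strictly exceeds $|1-2\alpha|$.
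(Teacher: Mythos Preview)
Your proposal is correct and follows essentially the same approach as the paper: both compute the Douglas-Rachford operator coordinatewise, observe that on the second coordinate $R_{\gamma A}R_{\gamma B}=-\id$ so the iteration there is multiplication by $1-2\alpha$, use the initialization $z_1^0=0$ to discard the first coordinate, and verify algebraically that for $\alpha\ge c=\tfrac{1}{2-\theta}$ the rate bound \eqref{eq:DR_rate_bound_AB_prop_subdiff} collapses to $|1-2\alpha|$. The only minor difference is that the paper obtains $R_{\gamma f^*}$ via Moreau's decomposition ($R_{\gamma f^*}=-R_{\gamma^{-1}f}$), whereas you compute $f^*(y)=\tfrac{1}{2\beta}y_1^2+\iota_{\{y_2=0\}}(y)$ explicitly and work out its prox directly; this is a cosmetic variation, not a different argument.
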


So, for all $\gamma$ parameters and some $\alpha$ parameters, the provided bound is tight. Especially, the optimal parameter choices
$\gamma=\tfrac{1}{\sqrt{\beta\sigma}}$ and $\alpha=\tfrac{1+2\sqrt{\beta/\sigma}}{2(1+\sqrt{\beta/\sigma})}$ give a tight bound.

It is interesting to note that although we have considered a more general class of problems than convex optimization problems, a convex optimization problem is used to attain the worst case rate. 


\subsection{Comparison to other bounds}


In \cite{gisBoydTAC2014metric_select}, it was shown that Douglas-Rachford splitting converges as $\tfrac{\sqrt{\beta/\sigma}-1}{\sqrt{\beta/\sigma}+1}$
when solving composite optimization 
problems of the form $0\in\gamma\nabla f+\gamma\partial g$, where
$\nabla f$ is $\sigma$-strongly monotone and $\tfrac{1}{\beta}$-cocoercive and the algorithm
parameters are chosen as $\alpha=1$ and $\gamma = \tfrac{1}{\sqrt{\beta\sigma}}$. In our setting,
with $\partial f$ being $\sigma$-strongly monotone and
$\partial g$ being $\tfrac{1}{\beta}$-cocoercive, we can instead pose the equivalent problem
$0\in\gamma\partial\hat{f}(x)+\gamma\partial\hat{g}(x)$ where $\hat{f} =
f-\tfrac{\sigma}{2}\|\cdot\|^2$ and
$\hat{g}=g+\tfrac{\sigma}{2}\|\cdot\|^2$. Then $\partial \hat{f}$ is merely monotone and
$\hat{g}$ is $\sigma$-strongly monotone and $\tfrac{1}{\beta+\sigma}$-cocoercive. For that problem,
\cite{gisBoydTAC2014metric_select} shows a linear convergence rate of at least rate $\tfrac{\sqrt{(\beta+\sigma)/\sigma}-1}{\sqrt{(\beta+\sigma)/\sigma}+1}$
(when optimal parameters are used).
This rate turns out to be better than the rate provided in
Theorem~\ref{thm:rate_subdiff}, i.e.
$\tfrac{\sqrt{\beta/\sigma}}{\sqrt{\beta/\sigma}+1}$, which assumes
that the strong convexity and smoothness properties are split between the
two operators. This is shown by the following chain of equivalences which departs from the fact that the square root is sub-additive, i.e., that $\sqrt{\beta+\sigma}\leq \sqrt{\sigma}+\sqrt{\beta}$ for $\beta,\sigma\geq 0$:
\begin{align*}
&&\sqrt{\beta+\sigma}-\sqrt{\sigma}&\leq \sqrt{\beta}\\
&\Leftrightarrow& \sqrt{(\beta+\sigma)/\sigma}-1&\leq \sqrt{\beta/\sigma}\\
&\Leftrightarrow& \tfrac{\sqrt{(\beta+\sigma)/\sigma}-1}{\sqrt{(\beta+\sigma)/\sigma}+1}&\leq \tfrac{\sqrt{\beta/\sigma}}{\sqrt{(\beta+\sigma)/\sigma}+1}\left(\leq \tfrac{\sqrt{\beta/\sigma}}{\sqrt{\beta/\sigma}+1}\right)
\end{align*}
This implies that,
from a worst case perspective, it is better to shift both properties into one
operator. This is also always possible, without increasing the
computational cost in the algorithm, since the prox-operator is
just shifted slightly:
\begin{align*}
{\rm{prox}}_{\gamma\hat{f}}(z) &=
\argmin_x\left\{\hat{f}(x)+\tfrac{1}{2\gamma}\|x-z\|^2\right\}\\
&=\argmin_x\left\{f(x)-\tfrac{\sigma}{2}\|x\|^2+\tfrac{1}{2\gamma}\|x-z\|^2\right\}\\
&=\argmin_x\left\{f(x)+\tfrac{1-\gamma\sigma}{2\gamma}\|x-\tfrac{1}{1-\gamma\sigma}z\|^2\right\}\\
&={\rm{prox}}_{\tfrac{\gamma}{1-\gamma\sigma} f}(\tfrac{1}{1-\gamma\sigma}z).
\end{align*}
A similar relation holds for ${\rm{prox}}_{\gamma\hat{g}}$ with the
sign in front of $\gamma\sigma$ flipped.


\section{$A$ strongly monotone and Lipschitz continuous}
\label{sec:A_sm_l}

In this section, we consider the case where one of the
operators is $\sigma$-strongly monotone and $\beta$-Lipschitz
continuous. This is assumption is stated next.
\begin{ass}
Suppose that:
\begin{enumerate}[(i)]
\item The operators $A~:~\hilbert\to\hilbert$ and $B~:~\hilbert\to 2^{\hilbert}$ are maximally monotone.
\item $A$ is $\sigma$-strongly monotone and $\beta$-Lipschitz continuous.
\end{enumerate}
\label{ass:general}
\end{ass}

First, we state a result that characterizes the resolvent of $A$. It is proven in Appendix~\ref{app:pf_A_sm_l}.
\begin{prop}
Assume that $A~:~\hilbert\to \hilbert$ is a maximal monotone $\beta$-Lipschitz continuous
operator. Then the resolvent $J_{A}=(\id+A)^{-1}$ 
satisfies 
\begin{align}
2\langle J_Ax-J_Ay,x-y\rangle&\geq \|x-y\|^2
+(1-\beta^2)\|J_Ax-J_Ay\|^2.
\label{eq:res_prop_A_lipschitz}
\end{align}
\label{prp:res_str_mono_general}
\end{prop}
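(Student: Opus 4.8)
The plan is to argue directly from the defining relation of the resolvent. Write $u=J_Ax$ and $v=J_Ay$. Since $A$ is maximally monotone, Minty's theorem guarantees that $J_A=(\id+A)^{-1}$ has full domain, and monotonicity of $A$ forces $J_A$ to be single-valued, so $u,v$ are well-defined points of $\hilbert$ and by definition of the resolvent we have $x=u+Au$ and $y=v+Av$ (here $A$ is single-valued as a Lipschitz mapping on $\hilbert$). In particular $x-y=(u-v)+(Au-Av)$, and this single identity is essentially all that is needed.

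Next I would substitute this identity into both sides of \eqref{eq:res_prop_A_lipschitz}. The left-hand side becomes
\begin{align*}
2\langle J_Ax-J_Ay,x-y\rangle=2\langle u-v,(u-v)+(Au-Av)\rangle=2\|u-v\|^2+2\langle u-v,Au-Av\rangle,
\end{align*}
while expanding $\|x-y\|^2=\|u-v\|^2+2\langle u-v,Au-Av\rangle+\|Au-Av\|^2$ turns the right-hand side into $(2-\beta^2)\|u-v\|^2+2\langle u-v,Au-Av\rangle+\|Au-Av\|^2$. The cross terms $2\langle u-v,Au-Av\rangle$ cancel, so the claimed inequality is equivalent to $\beta^2\|u-v\|^2\geq\|Au-Av\|^2$.

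Finally, this last inequality is precisely $\beta$-Lipschitz continuity of $A$ (Definition~\ref{def:Lipschitz}) evaluated at the pair $u,v$, squared, so the proof closes. There is no real obstacle here — the argument is a short algebraic manipulation — and the only point worth flagging is that monotonicity of $A$ enters solely to make $J_A$ a well-defined single-valued mapping with full domain; the inequality itself is a consequence of the Lipschitz bound alone. As a sanity check, for a linear operator $A=\beta\id$ the Lipschitz step holds with equality, and one verifies that \eqref{eq:res_prop_A_lipschitz} then holds with equality as well, which indicates the estimate is sharp.
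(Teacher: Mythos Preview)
Your proof is correct and in fact cleaner than the one in the paper. The paper first invokes Lemma~\ref{lem:coco_vs_lipschitz} to convert $\beta$-Lipschitz continuity of $A$ into $\tfrac{1}{2\beta}$-cocoercivity of $\beta\id+A$, then rewrites $\beta\id+A=(\id+A)+(\beta-1)\id$, substitutes $x=(\id+A)u$, $y=(\id+A)v$, and expands the resulting squares to arrive at \eqref{eq:res_prop_A_lipschitz} after a fair amount of rearrangement. Your route bypasses the cocoercivity lemma entirely: by writing $x-y=(u-v)+(Au-Av)$ and substituting into both sides, the cross term $2\langle u-v,Au-Av\rangle$ cancels and the inequality reduces immediately to $\|Au-Av\|^2\leq\beta^2\|u-v\|^2$. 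The payoff is a shorter, self-contained argument that makes transparent that only the Lipschitz bound is doing the work, exactly as you note; the paper's approach, by contrast, ties the result into its broader framework of cocoercivity equivalences but at the cost of more bookkeeping.
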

This resolvent property is used when proving the following contraction factor of the reflected resolvent. The result is proven in Appendix~\ref{app:pf_A_sm_l}.

\begin{thm}
Suppose that $A~:~\hilbert\to \hilbert$ is a $\sigma$-strongly monotone and $\beta$-Lipschitz continuous operator. Then the reflected resolvent $R_{A} = 2J_A-\id$ is $\sqrt{1-\tfrac{4\sigma}{1+2\sigma+\beta^2}}$-contractive.
\label{thm:refl_res_contr}
\end{thm}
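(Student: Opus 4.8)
The plan is to work directly with the resolvent characterization in Proposition~\ref{prp:res_str_mono_general}, to add to it the strong monotonicity of $A$, and to blend the two resulting inequalities with a single well-chosen weight. Fix $x,y\in\hilbert$ and set $u=J_Ax$, $v=J_Ay$. Since $A$ is monotone, single-valued and has full domain it is maximally monotone, so $J_A$ is everywhere defined and the resolvent identity gives $Au=x-u$ and $Av=y-v$. Expanding the reflected resolvent,
\begin{align*}
\|R_Ax-R_Ay\|^2=\|2(u-v)-(x-y)\|^2=\|x-y\|^2+4\|u-v\|^2-4\langle u-v,x-y\rangle ,
\end{align*}
so everything reduces to suitably lower bounding $\langle u-v,x-y\rangle$.

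Next I would record two such lower bounds. The first is exactly Proposition~\ref{prp:res_str_mono_general} (which uses only $\beta$-Lipschitz continuity), rewritten as $\langle u-v,x-y\rangle\geq\tfrac12\|x-y\|^2+\tfrac{1-\beta^2}{2}\|u-v\|^2$. The second comes from $\sigma$-strong monotonicity of $A$: since $Au-Av=(x-y)-(u-v)$, the inequality $\langle Au-Av,u-v\rangle\geq\sigma\|u-v\|^2$ rearranges to $\langle u-v,x-y\rangle\geq(1+\sigma)\|u-v\|^2$.

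Finally, I would take the convex combination of these two bounds with weights $\mu$ and $1-\mu$, substitute it into the expansion of $\|R_Ax-R_Ay\|^2$, and choose $\mu$ so that the coefficient of $\|u-v\|^2$ vanishes; a short computation gives $\mu=\tfrac{2\sigma}{1+\beta^2+2\sigma}\in(0,1)$, and the surviving term is $(1-2\mu)\|x-y\|^2=\bigl(1-\tfrac{4\sigma}{1+2\sigma+\beta^2}\bigr)\|x-y\|^2$, which is the claimed squared contraction factor. It then remains to check this factor lies in $[0,1)$: it is $<1$ since $\sigma>0$, and it is $\geq0$ iff $2\sigma\leq1+\beta^2$, which follows from $\sigma\leq\beta$ (combine strong monotonicity and Lipschitz continuity via Cauchy--Schwarz) together with $2\beta\leq1+\beta^2$. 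The only mildly delicate point is recognizing that the two elementary bounds must be blended precisely so as to eliminate the $\|u-v\|^2$ dependence; once that is seen the rest is bookkeeping, the secondary care being the verification that $\delta\in[0,1)$ through the auxiliary inequality $\sigma\le\beta$.
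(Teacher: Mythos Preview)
Your proof is correct and, in fact, cleaner than the paper's. Both arguments rest on the same two ingredients---the resolvent inequality from Proposition~\ref{prp:res_str_mono_general} (Lipschitz side) and the $(1+\sigma)$-cocoercivity of $J_A$ (strong monotonicity side)---but you combine them differently. The paper first blends the two inequalities so as to eliminate $\|J_Ax-J_Ay\|^2$ \emph{before} substituting into the expansion of $\|R_Ax-R_Ay\|^2$, and then has to appeal to the cocoercivity inequality a second time inside that expansion; the required nonnegativity of the weight $\beta^2-1$ forces a case split at $\beta=1$, with the case $\beta\le 1$ handled by a separate region-based argument on $\mathcal{R}$ and $\mathcal{R}_c$. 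You instead substitute the convex combination directly into the expansion and choose the single weight $\mu=\tfrac{2\sigma}{1+2\sigma+\beta^2}$ so that the $\|u-v\|^2$ term drops out \emph{there}; this $\mu$ lies in $(0,1)$ for every $\beta>0$, so no case distinction is needed. Your route is shorter and uniform in $\beta$; the paper's route, while heavier, makes the intermediate bound $\langle J_Ax-J_Ay,x-y\rangle\ge\tfrac{1+\sigma}{1+2\sigma+\beta^2}\|x-y\|^2$ explicit, which may be of independent interest.
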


The parameter $\gamma$ that optimizes the contraction factor for
$R_{\gamma A}$ is the minimizer of
$h(\gamma):=1-\tfrac{4\gamma\sigma}{1+\gamma\sigma+(\gamma\beta)^2}$ ($\gamma A$ is $\gamma\sigma$-strongly monotone and
$\gamma\beta$-Lipschitz continuous). The gradient $\nabla h(\gamma) =
\tfrac{4\sigma(\beta^2\gamma^2-1)}{(\beta^2\gamma^2+2\sigma\gamma+1)^2}$,
which implies that the extreme points are given by $\gamma=\pm\tfrac{1}{\beta}$. Since
$\gamma>0$ and the gradient is positive for
$\gamma>\tfrac{1}{\beta}$ and negative for $\gamma\in(0,\tfrac{1}{\beta})$,
$\gamma=\tfrac{1}{\beta}$ optimizes the contraction factor.
The corresponding rate is
\begin{align*}
\sqrt{1-\tfrac{4\gamma\sigma}{1+2\gamma\sigma+(\gamma\beta)^2}} = 
\sqrt{1-\tfrac{2\sigma/\beta}{1+\sigma/\beta}}=\sqrt{\tfrac{1-\sigma/\beta}{1+\sigma/\beta}}=\sqrt{\tfrac{\beta/\sigma-1}{\beta/\sigma+1}}.
\end{align*}
This is summarized in the following proposition.
\begin{prop}
The parameter $\gamma$ that optimizes the contraction factor of
$R_{\gamma A}$ is given by $\gamma = \tfrac{1}{\beta}$.
The corresponding contraction factor is $\sqrt{\tfrac{\beta/\sigma-1}{\beta/\sigma+1}}$. 
\label{prp:A_sm_l_optparam}
\end{prop}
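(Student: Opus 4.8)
The plan is to apply Theorem~\ref{thm:refl_res_contr} to the scaled operator $\gamma A$ and then minimize the resulting bound over $\gamma$ by elementary one-variable calculus. Since $A$ is $\sigma$-strongly monotone and $\beta$-Lipschitz continuous, the operator $\gamma A$ is $\gamma\sigma$-strongly monotone and $\gamma\beta$-Lipschitz continuous for every $\gamma\in(0,\infty)$, so Theorem~\ref{thm:refl_res_contr} gives that $R_{\gamma A}$ is $\sqrt{h(\gamma)}$-contractive with
\begin{align*}
h(\gamma):=1-\tfrac{4\gamma\sigma}{1+2\gamma\sigma+(\gamma\beta)^2}.
\end{align*}
I would first note that $0\leq h(\gamma)<1$ on $(0,\infty)$: the subtracted fraction is positive, so $h(\gamma)<1$, and $h(\gamma)\geq 0$ because $\beta\geq\sigma$ always holds for a $\sigma$-strongly monotone $\beta$-Lipschitz operator (this makes the square root, and hence the whole statement, well defined).

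Next, since $t\mapsto\sqrt{t}$ is strictly increasing on $[0,\infty)$, minimizing the contraction factor over $\gamma$ is equivalent to minimizing $h$ itself. Differentiating,
\begin{align*}
h'(\gamma)=\tfrac{4\sigma(\beta^2\gamma^2-1)}{(\beta^2\gamma^2+2\sigma\gamma+1)^2},
\end{align*}
so the only stationary point in $(0,\infty)$ is $\gamma=\tfrac{1}{\beta}$. The key point is that, because the optimization runs over the open half-line rather than a closed interval, solving $h'(\gamma)=0$ alone is not conclusive; one must check the sign of $h'$ on both sides. Here $h'<0$ on $(0,\tfrac{1}{\beta})$ and $h'>0$ on $(\tfrac{1}{\beta},\infty)$, so $\gamma=\tfrac{1}{\beta}$ is the global minimizer of $h$ on $(0,\infty)$, hence the $\gamma$ that optimizes the contraction factor of $R_{\gamma A}$.

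Finally I would substitute $\gamma=\tfrac{1}{\beta}$ into $\sqrt{h(\gamma)}$ and simplify:
\begin{align*}
\sqrt{1-\tfrac{4\sigma/\beta}{2+2\sigma/\beta}}=\sqrt{1-\tfrac{2\sigma/\beta}{1+\sigma/\beta}}=\sqrt{\tfrac{1-\sigma/\beta}{1+\sigma/\beta}}=\sqrt{\tfrac{\beta/\sigma-1}{\beta/\sigma+1}},
\end{align*}
which is the claimed optimal contraction factor. This argument is essentially the computation already carried out in the paragraph preceding the proposition; the only mild obstacle is the open-interval optimization, which is handled by the explicit sign analysis of $h'$ above rather than by a second-derivative test.
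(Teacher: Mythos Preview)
Your proposal is correct and follows essentially the same approach as the paper: apply Theorem~\ref{thm:refl_res_contr} to $\gamma A$, minimize $h(\gamma)=1-\tfrac{4\gamma\sigma}{1+2\gamma\sigma+(\gamma\beta)^2}$ by computing $h'(\gamma)=\tfrac{4\sigma(\beta^2\gamma^2-1)}{(\beta^2\gamma^2+2\sigma\gamma+1)^2}$, use the sign change at $\gamma=\tfrac{1}{\beta}$ to conclude it is the global minimizer, and substitute to obtain the stated contraction factor. Your added remarks on well-definedness of the square root and on handling the open-interval optimization are sound refinements of the same argument.
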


Now, we are ready to state the convergence rate results for Douglas-Rachford splitting.
\begin{thm}
Suppose that Assumption~\ref{ass:general} holds and that the
Douglas-Rachford algorithm \eqref{eq:DR_alg} is applied to solve $0\in
\gamma Ax+\gamma Bx$. Let $\delta=\sqrt{1-\tfrac{4\gamma\sigma}{1+2\gamma\sigma+(\gamma\beta)^2}}$, then
the algorithm converges at least with rate factor
\begin{align}
|1-\alpha|+\alpha\delta
\label{eq:rate_factor_general}
\end{align}
for all $\alpha\in(0,\tfrac{2}{1+\delta})$.
Optimizing this bound w.r.t. $\alpha$ and $\gamma$ gives $\alpha=1$
and $\gamma=\tfrac{1}{\beta}$ and corresponding optimal rate bound $\sqrt{\tfrac{\beta/\sigma-1}{\beta/\sigma+1}}$.
\label{thm:rate_general}
\end{thm}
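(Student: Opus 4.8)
The plan is to reduce everything to the contraction factor of the reflected resolvent established in Theorem~\ref{thm:refl_res_contr}, combined with the relaxation lemma, Lemma~\ref{lem:avg_contr}. First, observe that solving $0\in\gamma Ax+\gamma Bx$ is the same monotone inclusion associated with the operators $\gamma A$ and $\gamma B$, and that $\gamma A$ is $\gamma\sigma$-strongly monotone and $\gamma\beta$-Lipschitz continuous whenever $A$ is $\sigma$-strongly monotone and $\beta$-Lipschitz continuous. Hence Theorem~\ref{thm:refl_res_contr} applies to $\gamma A$ and gives that $R_{\gamma A}$ is $\delta$-contractive with $\delta=\sqrt{1-\tfrac{4\gamma\sigma}{1+2\gamma\sigma+(\gamma\beta)^2}}\in[0,1)$. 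Since $B$ is merely assumed maximally monotone, $R_{\gamma B}$ is nonexpansive (\cite[Corollary~23.10]{bauschkeCVXanal}), so the composition $R_{\gamma A}R_{\gamma B}$ is $\delta$-contractive, being the composition of a $\delta$-contractive map with a nonexpansive one.

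Next, the Douglas-Rachford operator is $S=(1-\alpha)\id+\alpha R_{\gamma A}R_{\gamma B}$, i.e., the $\alpha$-relaxation of the $\delta$-contractive map $R_{\gamma A}R_{\gamma B}$. Lemma~\ref{lem:avg_contr} then applies directly: $S$ is contractive for every $\alpha\in(0,\tfrac{2}{1+\delta})$ with contraction factor exactly $|1-\alpha|+\alpha\delta$, which is the claimed rate \eqref{eq:rate_factor_general}. Because $\hilbert$ is complete and $S$ is a strict contraction, the Banach fixed-point theorem yields a unique fixed point $z^{\star}$ with $\|z^{k}-z^{\star}\|\leq(|1-\alpha|+\alpha\delta)^{k}\|z^{0}-z^{\star}\|$, and by \cite[Proposition~25.1]{bauschkeCVXanal} the point $J_{\gamma A}z^{\star}$ solves the inclusion. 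This step also justifies admitting $\alpha\geq 1$: the Krasnosel'ski\u{\i}--Mann argument used for averaged maps requires $\alpha<1$, but contractivity of $S$ gives convergence on the whole interval $(0,\tfrac{2}{1+\delta})$, which contains $1$ precisely because $\delta<1$.

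Finally I would carry out the two-stage optimization of $|1-\alpha|+\alpha\delta$. For fixed $\gamma$, the bound equals $1-(1-\delta)\alpha$ on $\alpha\leq 1$ (strictly decreasing since $\delta<1$) and $(1+\delta)\alpha-1$ on $\alpha\geq 1$ (strictly increasing), so the kink $\alpha=1$ is optimal, is feasible since $1\in(0,\tfrac{2}{1+\delta})$, and gives rate $\delta$. It then remains to minimize $\delta(\gamma)$ over $\gamma\in(0,\infty)$, equivalently to minimize $h(\gamma)=1-\tfrac{4\gamma\sigma}{1+2\gamma\sigma+(\gamma\beta)^2}$; this is exactly the computation already performed just before Proposition~\ref{prp:A_sm_l_optparam}, where $\nabla h(\gamma)=\tfrac{4\sigma(\beta^2\gamma^2-1)}{(\beta^2\gamma^2+2\sigma\gamma+1)^2}$ identifies $\gamma=\tfrac{1}{\beta}$ as the minimizer, yielding the optimal rate $\sqrt{\tfrac{\beta/\sigma-1}{\beta/\sigma+1}}$.

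The substantive content of the whole argument is packed into Theorem~\ref{thm:refl_res_contr} (and the resolvent inequality of Proposition~\ref{prp:res_str_mono_general} behind it); the present proof is essentially bookkeeping. The one point that deserves explicit care, rather than routine calculation, is the extension of the admissible range of $\alpha$ beyond $1$ up to $\tfrac{2}{1+\delta}$, which is legitimate here only because $S$ is genuinely contractive — not merely averaged — so that convergence no longer relies on the $\alpha<1$ restriction inherent to the nonexpansive setting.
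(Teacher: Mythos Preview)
Your proof is correct and follows essentially the same route as the paper: contractivity of $R_{\gamma A}$ from Theorem~\ref{thm:refl_res_contr}, nonexpansiveness of $R_{\gamma B}$, Lemma~\ref{lem:avg_contr} for the relaxation, and the $\gamma$-optimization from Proposition~\ref{prp:A_sm_l_optparam}. You simply spell out a few steps (the composition argument, the Banach fixed-point invocation, and the $\alpha$-case analysis) that the paper leaves implicit in its one-line proof.
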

\begin{proof}
It follows immediately from Theorem~\ref{thm:refl_res_contr}, Lemma~\ref{lem:avg_contr}, and Proposition~\ref{prp:A_sm_l_optparam} by noting that $\alpha=1$ minimizes \eqref{eq:rate_factor_general}.
\end{proof}
In the following section, we will see that there exists a problem from the considered class that converges exactly with the provided rate.

\subsection{Tightness}

We consider a problem where $A$ is a rotation operator, i.e., the it is given by
\begin{align}
A=d\begin{bmatrix}\cos{\psi} & -\sin{\psi}\\
\sin{\psi} & \cos{\psi}
\end{bmatrix}
\label{eq:A_def_lip}
\end{align}
where $0\leq \psi<\tfrac{\pi}{2}$ and $d\in(0,\infty)$. First, we show that $A$ is strongly monotone and Lipschitz continuous.
\begin{prop}
The operator $A$ in \eqref{eq:A_def_lip} is
$d\cos{\psi}$-strongly monotone and $d$-Lipschitz continuous.
\label{prp:A_str_mono_and_Lipschitz}
\end{prop}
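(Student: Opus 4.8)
The plan is to use that $A$ is a bounded \emph{linear} operator on $\hilbert=\reals^2$, so that both claims reduce to properties of the matrix $M:=\begin{bmatrix}\cos\psi & -\sin\psi\\ \sin\psi & \cos\psi\end{bmatrix}$ (with $A=dM$) evaluated on the difference vector $w:=x-y$.

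For Lipschitz continuity I would observe that $M$ is an orthogonal (rotation) matrix, hence $\|Mw\|=\|w\|$ for every $w$. Therefore $\|Ax-Ay\|=\|Aw\|=d\|Mw\|=d\|w\|=d\|x-y\|$, which is exactly $d$-Lipschitz continuity (in fact, since $A$ is $d$ times an isometry, the bound holds with equality).

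For strong monotonicity the only computation needed is to evaluate the quadratic form $\langle Aw,w\rangle=d\,w^{\top}Mw$. Writing $w=(w_1,w_2)$,
\begin{align*}
w^{\top}Mw &= w_1(\cos\psi\, w_1-\sin\psi\, w_2)+w_2(\sin\psi\, w_1+\cos\psi\, w_2)\\
&= \cos\psi\,(w_1^2+w_2^2)=\cos\psi\,\|w\|^2,
\end{align*}
the cross terms cancelling because the off-diagonal part of $M$ is skew-symmetric. Hence $\langle Ax-Ay,x-y\rangle=d\cos\psi\,\|x-y\|^2$ for all $x,y$, and since $0\le\psi<\tfrac{\pi}{2}$ forces $\cos\psi>0$, the constant $\sigma:=d\cos\psi$ is strictly positive, so $A$ is $d\cos\psi$-strongly monotone in the sense of Definition~\ref{def:str_monotone} (indeed with equality).

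There is essentially no obstacle here: the work is just the two-line matrix computation above. The decomposition $A=d\cos\psi\,\id+d\sin\psi\,S$ with $S$ skew-symmetric also makes transparent why $A$ is (maximally) monotone, being the sum of a strongly monotone multiple of the identity and a continuous skew (hence monotone) linear operator with full domain. One point worth flagging is the excluded boundary case $\psi=\tfrac{\pi}{2}$: there $\cos\psi=0$ and strong monotonicity fails, and for $\psi$ near $\tfrac{\pi}{2}$ the strong monotonicity constant $d\cos\psi$ degrades while the Lipschitz constant stays $d$, which is precisely what will make this rotation family a sharp tightness example in the sequel.
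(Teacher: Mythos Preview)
Your proof is correct and essentially identical to the paper's: both use linearity to reduce to a single vector, compute the quadratic form $\langle Av,v\rangle=d\cos\psi\,\|v\|^2$ directly for strong monotonicity, and invoke that $A$ is $d$ times a rotation (isometry) for Lipschitz continuity. Your phrasing via $\|Mw\|=\|w\|$ is arguably cleaner than the paper's reference to ``largest eigenvalue'', but the substance is the same.
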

\begin{proof}
We first show that $A$ is $d\cos{\psi}$-strongly monotone. Since $A$ is linear, we have
\begin{align*}
\langle Av,v\rangle&=d\langle (\cos{\psi}v_1-\sin{\psi}v_2,\sin{\psi}v_1+\cos{\psi}v_2),(v_1,v_2)\rangle\\
&=d\cos{\psi}(v_1^2+v_2^2)=d\cos{\psi}\|v\|^2.
\end{align*}
That is, $A$ is $d\cos{\psi}$-strongly monotone. Since $A$ is a scaled (with $d$) rotation operator, its largest eigenvalue is $d$, and hence $A$ is $d$-Lipschitz. This concludes the proof.
\end{proof}

We need an explicit form of the reflected resolvent of $A$ to show that the rate is tight. To state it, we define the following alternative arctan definition that is valid when $\tan{\xi}=\tfrac{x}{y}$ and $x\geq 0$:
\begin{align}
\arctan_2\left(\tfrac{x}{y}\right)=\begin{cases}
\arctan(\tfrac{x}{y}) & {\hbox{if }} x\geq 0, y>0\\
\arctan(\tfrac{x}{y})+\pi& {\hbox{if }} x\geq 0, y<0\\
\tfrac{\pi}{2}& x \geq 0, y=0
\end{cases}
\label{eq:atan2}
\end{align}
This arctan is defined for nonnegative numerators $x$ only, and outputs an angle in the interval $[0,\pi]$.

Next, we provide the expression for the reflected resolvent. To simplify its notation, we let $\sigma$ denote the strong convexity modulus and $\beta$ the Lipschitz constant of $A$, i.e.,
\begin{align}
\sigma &= d\cos{\psi}, & \beta&=d.
\label{eq:s_b_A_sm_lip}
\end{align}
The following result is proven in Appendix~\ref{app:pf_A_sm_l}.
\begin{prop}
The reflected resolvent of $\gamma A$, with $A$ in \eqref{eq:A_def_lip} and $\gamma\in(0,\infty)$, is
\begin{align*}
  R_{\gamma A} = \sqrt{1-\tfrac{4\gamma\sigma}{1+2\gamma\sigma+(\gamma\beta)^2}}\begin{bmatrix}
\cos{\xi}&\sin{\xi}\\
-\sin{\xi} & \cos{\xi}
\end{bmatrix}
\end{align*}
where $\sigma$ and $\beta$ are defined in \eqref{eq:s_b_A_sm_lip},
and $\xi$ satisfies $\xi=\arctan_2\left(\tfrac{2\gamma\sqrt{\beta^2-\sigma^2}}{1-(\gamma\beta)^2}\right)$ with $\arctan_2$ defined in \eqref{eq:atan2}.
\label{prp:A_sm_lip_refl_contr}
\end{prop}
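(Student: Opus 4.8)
The plan is to compute the resolvent $J_{\gamma A}$ in closed form by inverting a $2\times2$ matrix, form the reflected resolvent $R_{\gamma A}=2J_{\gamma A}-\id$, and then read off its scaling factor and rotation angle. Since $A$ in \eqref{eq:A_def_lip} is a rotation scaled by $d$, the matrix $\id+\gamma A$ is again of scaled-rotation form,
\begin{align*}
\id+\gamma A=\begin{bmatrix}a & -b\\ b & a\end{bmatrix},\qquad a:=1+\gamma d\cos\psi,\quad b:=\gamma d\sin\psi\geq 0,
\end{align*}
where $b\geq0$ because $0\leq\psi<\tfrac{\pi}{2}$. Its inverse is $(a^2+b^2)^{-1}\left[\begin{smallmatrix}a & b\\ -b & a\end{smallmatrix}\right]$, so $J_{\gamma A}$ equals this expression and
\begin{align*}
R_{\gamma A}=2J_{\gamma A}-\id=\begin{bmatrix}c & e\\ -e & c\end{bmatrix},\qquad c:=\tfrac{2a}{a^2+b^2}-1,\quad e:=\tfrac{2b}{a^2+b^2}\geq 0.
\end{align*}
This is a scaled rotation, so I only need its scaling $r=\sqrt{c^2+e^2}$ and an angle $\xi$ with $\cos\xi=c/r$ and $\sin\xi=e/r$.

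For the scaling, set $s:=a^2+b^2$; expanding gives $(2a-s)^2+4b^2=4(a^2+b^2)-4as+s^2=s(s-4a+4)$, hence $c^2+e^2=1-\tfrac{4(a-1)}{a^2+b^2}$. Substituting $a-1=\gamma d\cos\psi=\gamma\sigma$ and $a^2+b^2=1+2\gamma d\cos\psi+\gamma^2d^2=1+2\gamma\sigma+(\gamma\beta)^2$, with $\sigma,\beta$ as in \eqref{eq:s_b_A_sm_lip}, gives exactly $r=\sqrt{1-\tfrac{4\gamma\sigma}{1+2\gamma\sigma+(\gamma\beta)^2}}$. For the angle, $e\geq0$ forces $\sin\xi\geq0$, i.e.\ $\xi\in[0,\pi]$, which is precisely the output range of $\arctan_2$ in \eqref{eq:atan2}; moreover
\begin{align*}
\tan\xi=\frac{e}{c}=\frac{2b}{2a-a^2-b^2}=\frac{2\gamma d\sin\psi}{1-(\gamma\beta)^2}=\frac{2\gamma\sqrt{\beta^2-\sigma^2}}{1-(\gamma\beta)^2},
\end{align*}
where I used $2a-(a^2+b^2)=2(1+\gamma\sigma)-\bigl(1+2\gamma\sigma+(\gamma\beta)^2\bigr)=1-(\gamma\beta)^2$ and $d\sin\psi=\sqrt{d^2-d^2\cos^2\psi}=\sqrt{\beta^2-\sigma^2}$. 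Since the numerator is nonnegative, $\xi=\arctan_2\bigl(\tfrac{2\gamma\sqrt{\beta^2-\sigma^2}}{1-(\gamma\beta)^2}\bigr)$, which is the claimed formula.

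The computation is essentially routine; the only points needing care are the algebraic identity $(2a-s)^2+4b^2=s(s-4a+4)$ and the sign of the denominator $1-(\gamma\beta)^2$, which may be positive, negative, or zero depending on $\gamma$. This is exactly what the three-case definition of $\arctan_2$ in \eqref{eq:atan2} is designed to absorb: the numerator $2\gamma\sqrt{\beta^2-\sigma^2}$ is always $\geq0$, so $\xi$ is pinned down unambiguously in $[0,\pi]$ in each of the three cases, and I do not expect any genuine obstacle beyond this bookkeeping.
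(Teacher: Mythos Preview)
Your proof is correct and follows essentially the same route as the paper: compute $J_{\gamma A}$ by inverting the $2\times 2$ scaled-rotation matrix $\id+\gamma A$, form $R_{\gamma A}=2J_{\gamma A}-\id$, and read off the scaling and angle in polar form. Your use of the abbreviations $a,b,s$ and the identity $(2a-s)^2+4b^2=s(s-4a+4)$ streamlines the algebra slightly compared to the paper, which carries the $\sigma,\beta,\gamma$ expressions throughout, but the argument is the same.
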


That is, the reflected resolvent is first a rotation then a contraction. The contraction factor is exactly the upper bound on the contraction factor in Theorem~\ref{thm:rate_general}. Therefore, the $A$ in \eqref{eq:A_def_lip} can be used to show tightness of the results in Theorem~\ref{thm:rate_general}. To do so, we need another operator $B$ that cancels the rotation introduced by $A$. For $\alpha\in(0,1]$, we will need $R_{\gamma A}R_{\gamma B}=\sqrt{1-\tfrac{4\gamma\sigma}{1+2\gamma\sigma+(\gamma\beta)^2}}I$ and for $\alpha>1$, we will need $R_{\gamma A}R_{\gamma B}=-\sqrt{1-\tfrac{4\gamma\sigma}{1+2\gamma\sigma+(\gamma\beta)^2}}I$. This is clearly achieved if $R_{\gamma B}$ is another rotation operator. Using the following straightforward consequence of Minty's theorem (see \cite{minty1962}) we conclude that any rotation operator (since they are nonexpansive) is the reflected resolvent of a maximally monotone operator.
\begin{prop}
An operator $R~:~\hilbert\to\hilbert$ is nonexpansive if and only if it is the reflected resolvent of a maximally monotone operator.
\end{prop}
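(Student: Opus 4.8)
The statement to prove is: an operator $R~:~\hilbert\to\hilbert$ is nonexpansive if and only if it is the reflected resolvent of a maximally monotone operator. The plan is to exhibit the correspondence explicitly through the formula $R=2J_A-\id$, i.e. $J_A=\tfrac{1}{2}(\id+R)$, and to check that maximal monotonicity of $A$ matches nonexpansiveness of $R$ in both directions, invoking Minty's theorem \cite{minty1962} to handle the ``maximal'' part.

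\emph{($\Leftarrow$)} This direction is already available: if $R=R_A=2J_A-\id$ for a maximally monotone $A$, then $R$ is nonexpansive by \cite[Corollary~23.10]{bauschkeCVXanal}, which is exactly the fact recorded earlier in the paper when discussing the Douglas-Rachford iteration. So there is nothing new to do here beyond citing it.

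\emph{($\Rightarrow$)} Suppose $R$ is nonexpansive. Define $J:=\tfrac{1}{2}(\id+R)$; this is a single-valued operator with full domain, and it is $\tfrac12$-averaged (indeed $J=\tfrac12\id+\tfrac12 R$ with $R$ nonexpansive), hence firmly nonexpansive. Now set $A:=J^{-1}-\id$, defined through its graph; equivalently, $(x,u)\in\mathrm{gph}(A)$ iff $x=J(x+u)$. I would verify that $A$ is monotone: for $(x,u),(y,v)\in\mathrm{gph}(A)$ put $p=x+u$, $q=y+v$, so $x=Jp$, $y=Jq$; then $\langle u-v,x-y\rangle=\langle (p-q)-(Jp-Jq),Jp-Jq\rangle=\langle p-q,Jp-Jq\rangle-\|Jp-Jq\|^2\ge 0$ by firm nonexpansiveness of $J$. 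Thus $A$ is monotone, and by construction $J=(\id+A)^{-1}=J_A$, so $R=2J_A-\id=R_A$. Finally, maximality: since $J$ has full domain, $\mathrm{ran}(\id+A)=\mathrm{dom}(J^{-1})^{-1}$... more cleanly, $\mathrm{ran}(\id+A)=\mathrm{ran}(J^{-1})=\mathrm{dom}(J)=\hilbert$, so $\id+A$ is surjective, and Minty's theorem \cite{minty1962} (a monotone operator is maximal iff $\mathrm{ran}(\id+A)=\hilbert$) gives that $A$ is maximally monotone.

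The only mild subtlety — and the part I would be most careful about — is the bookkeeping around well-definedness of $A=J^{-1}-\id$: one must check that $J$ is injective so that $J^{-1}$ is single-valued where defined, which follows from firm nonexpansiveness ($\|Jp-Jq\|^2\le\langle p-q,Jp-Jq\rangle$ forces $Jp=Jq\Rightarrow p=q$ only after noting $\langle p-q,0\rangle=0$, hence we actually get injectivity directly from $\|Jp - Jq\| = 0 \Rightarrow$ nothing, so instead use: $J$ firmly nonexpansive $\Rightarrow \id - J$ firmly nonexpansive, and $p \mapsto Jp$ injective is not needed — $J^{-1}$ as a set-valued operator is fine and $A$ is still well-defined as a possibly set-valued monotone operator whose resolvent is exactly $J$). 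Everything else is a routine repackaging of the firm-nonexpansiveness/maximal-monotonicity dictionary, so the proof can be kept to a few lines by citing Minty's theorem and \cite[Corollary~23.10]{bauschkeCVXanal}.
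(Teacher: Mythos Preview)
Your proposal is correct and follows essentially the same route as the paper: the paper simply cites \cite[Corollary~23.8]{bauschkeCVXanal} (the bijection between maximally monotone operators and firmly nonexpansive operators with full domain, i.e.\ Minty's theorem in resolvent form) together with \cite[Proposition~4.2]{bauschkeCVXanal} (the bijection $T\leftrightarrow 2T-\id$ between firmly nonexpansive and nonexpansive operators), which is exactly what you unpack by hand. Your detour on injectivity of $J$ is unnecessary, as you eventually note yourself---$A=J^{-1}-\id$ is perfectly well-defined as a set-valued operator and the argument goes through unchanged.
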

\begin{proof}
It follows immediately from \cite[Corollary~23.8]{bauschkeCVXanal} and \cite[Proposition~4.2]{bauschkeCVXanal}.
\end{proof}

With this in mind, we can state the tightness claim.
\begin{prop}
Let $\gamma\in(0,\infty)$, $\delta=\sqrt{1-\tfrac{4\gamma\sigma}{1+2\gamma\sigma+(\gamma\beta)^2}}$, and $\xi$ be defined as in Proposition~\ref{prp:A_sm_lip_refl_contr}. Suppose that $A$ is as in \eqref{eq:A_def_lip} and $B$ is maximally monotone and satisfies either of the following:
\begin{enumerate}[(i)]
\item if $\alpha\in(0,1]$: $B=B_1$ with $R_{\gamma B_1}=\left[\begin{smallmatrix}\cos{\xi}&-\sin{\xi}\\
\sin{\xi}&\cos{\xi}\end{smallmatrix}\right]$,
\item $\alpha\in(1,\tfrac{2}{1+\delta})$: $B=B_2$ with $R_{\gamma B_2}=\left[\begin{smallmatrix}\cos{(\pi-\xi)}&\sin{(\pi-\xi)}\\
-\sin{(\pi-\xi)}&\cos{(\pi-\xi)}\end{smallmatrix}\right]$.
\end{enumerate}
Then the $z^k$ sequence for solving $0\in \gamma Ax+\gamma Bx$ using \eqref{eq:DR_alg} converges exactly with the rate $|1-\alpha|+\alpha\delta$.
\label{prp:A_sm_lip_tight}
\end{prop}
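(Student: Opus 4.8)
## Proof proposal

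The plan is to compute the composition $R_{\gamma A}R_{\gamma B}$ explicitly in each of the two cases and verify that it equals $\pm\delta\,\id$, so that the Douglas-Rachford operator $(1-\alpha)\id+\alpha R_{\gamma A}R_{\gamma B}$ becomes a scalar multiple of the identity, whose operator norm is exactly $|1-\alpha|+\alpha\delta$; picking a nonzero starting point then makes the convergence rate exact. First I would invoke Proposition~\ref{prp:A_sm_lip_refl_contr} to write $R_{\gamma A}=\delta\left[\begin{smallmatrix}\cos\xi & \sin\xi\\ -\sin\xi & \cos\xi\end{smallmatrix}\right]$. In case (i), $R_{\gamma B_1}=\left[\begin{smallmatrix}\cos\xi & -\sin\xi\\ \sin\xi & \cos\xi\end{smallmatrix}\right]$ is the inverse rotation, so the product telescopes to $R_{\gamma A}R_{\gamma B_1}=\delta\,\id$. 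In case (ii), $R_{\gamma B_2}$ is the clockwise rotation by $\pi-\xi$ written in the same orientation as $R_{\gamma A}$ (i.e.\ $\left[\begin{smallmatrix}\cos(\pi-\xi) & \sin(\pi-\xi)\\ -\sin(\pi-\xi) & \cos(\pi-\xi)\end{smallmatrix}\right]$), and composing two rotations of this form adds the angles, giving a rotation by $\xi+(\pi-\xi)=\pi$, i.e.\ $-\id$; hence $R_{\gamma A}R_{\gamma B_2}=-\delta\,\id$. Throughout, one should note that $A$ in \eqref{eq:A_def_lip} is maximally monotone (it is a bounded, everywhere-defined monotone linear operator, so maximal) and $\sigma$-strongly monotone with $\sigma=d\cos\psi$ and $\beta$-Lipschitz with $\beta=d$ by Proposition~\ref{prp:A_str_mono_and_Lipschitz}, so that $\delta=\sqrt{1-\tfrac{4\gamma\sigma}{1+2\gamma\sigma+(\gamma\beta)^2}}$ is exactly the contraction factor appearing in Theorem~\ref{thm:rate_general}, and that the prescribed $R_{\gamma B_i}$ are genuine reflected resolvents of maximally monotone operators by the proposition immediately preceding this statement (rotations are nonexpansive).

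Once $R_{\gamma A}R_{\gamma B}=s\delta\,\id$ with $s=+1$ in case (i) and $s=-1$ in case (ii), the iteration \eqref{eq:DR_alg} reads $z^{k+1}=\left((1-\alpha)+\alpha s\delta\right)z^k$, so $z^k=\left((1-\alpha)+\alpha s\delta\right)^k z^0$ and $\|z^{k+1}-z^\star\|=\left|(1-\alpha)+\alpha s\delta\right|\,\|z^k-z^\star\|$ for every $k$, where $z^\star=0$ is the (unique) fixed point. For case (i), $\alpha\in(0,1]$ gives $1-\alpha\ge 0$ and $\alpha s\delta=\alpha\delta\ge 0$, so the factor is $(1-\alpha)+\alpha\delta=|1-\alpha|+\alpha\delta$. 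For case (ii), $\alpha\in(1,\tfrac{2}{1+\delta})$ gives $1-\alpha<0$ and $\alpha s\delta=-\alpha\delta<0$, so the factor is $|(1-\alpha)-\alpha\delta|=(\alpha-1)+\alpha\delta=|1-\alpha|+\alpha\delta$; the upper restriction $\alpha<\tfrac{2}{1+\delta}$ is exactly what keeps this below $1$. In both cases the convergence is therefore \emph{exactly} at rate $|1-\alpha|+\alpha\delta$ for any $z^0\ne 0$, matching the bound in Theorem~\ref{thm:rate_general}.

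The only genuinely non-routine point is the bookkeeping in case (ii): one must be careful that $R_{\gamma A}$ and $R_{\gamma B_2}$ are written in the \emph{same} rotational orientation before adding angles, and that the angle $\xi$, which by \eqref{eq:atan2} lies in $[0,\pi]$, combines with $\pi-\xi$ to give $\pi$ regardless of which branch of $\arctan_2$ was used; the resulting $-\id$ then absorbs the sign that $\alpha>1$ needs. Everything else is a direct matrix multiplication together with the already-established facts that the chosen $R_{\gamma B_i}$ are valid reflected resolvents and that $R_{\gamma A}$ has the claimed form. I would end the proof by remarking that the argument also pins down the fixed point $z^\star=0$ and the solution $x^\star=J_{\gamma A}z^\star$, confirming the set-up is non-vacuous.
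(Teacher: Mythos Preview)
Your proposal is correct and follows essentially the same approach as the paper: compute $R_{\gamma A}R_{\gamma B}$ explicitly using the form of $R_{\gamma A}$ from Proposition~\ref{prp:A_sm_lip_refl_contr}, observe that it collapses to $\pm\delta\,\id$, and then read off the exact rate $|1-\alpha|+\alpha\delta$ from the resulting scalar iteration, splitting into the two sign cases according to whether $\alpha\le 1$ or $\alpha>1$. Your additional remarks (maximal monotonicity of $A$, validity of $R_{\gamma B_i}$ as reflected resolvents, identification of $z^\star=0$) are sound embellishments but not needed beyond what the paper already records in the surrounding text.
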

\begin{proof}
Case {\it{(i)}}: Using the reflected resolvent $R_{\gamma A}$ in Proposition~\ref{prp:A_sm_lip_refl_contr} and that $\alpha\in(0,1]$, we conclude that
\begin{align*}
z^{k+1} &= (1-\alpha)z^k+\alpha R_{\gamma A}R_{\gamma B}z^k\\
&=(1-\alpha)z^k+\alpha \delta \begin{bmatrix}\cos{\xi}&\sin{\xi}\\
-\sin{\xi}&\cos{\xi}\end{bmatrix}\begin{bmatrix}\cos{\xi}&-\sin{\xi}\\
\sin{\xi}&\cos{\xi}\end{bmatrix}z^k\\
&=(1-\alpha)z^k+\alpha \delta z^k\\
&=|1-\alpha|z^k+\alpha \delta z^k
\end{align*}
Case {\it{(ii)}}: Using the reflected resolvent $R_{\gamma A}$ in Proposition~\ref{prp:A_sm_lip_refl_contr} and that $\alpha\geq 1$, we conclude that
\begin{align*}
z^{k+1} &= (1-\alpha)z^k+\alpha R_{\gamma A}R_{\gamma B}z^k\\
&=(1-\alpha)z^k+\alpha \delta \begin{bmatrix}\cos{\xi}&\sin{\xi}\\
-\sin{\xi}&\cos{\xi}\end{bmatrix}\begin{bmatrix}\cos{(\pi-\xi)}&\sin{(\pi-\xi)}\\
-\sin{(\pi-\xi)}&\cos{(\pi-\xi)}\end{bmatrix}z^k\\
&=(1-\alpha)z^k-\alpha \delta z^k\\
&=-(|1-\alpha|z^k+\alpha \delta )z^k.
\end{align*}
In both cases, the convergence rate is exactly $|1-\alpha|+\alpha\sqrt{1-\tfrac{4\gamma\sigma}{1+2\gamma\sigma+(\gamma\beta)^2}}$. This completes the proof.
\end{proof}

\begin{rem}
It can be shown that the maximally monotone operator $B_1$ that gives $R_{\gamma B_1}$ satisfies
$B_1=\tfrac{1}{\gamma(1+\cos{\xi})}\left[\begin{smallmatrix}0&-\sin{\xi}\\
\sin{\xi}&0\end{smallmatrix}\right]$ if $\xi\in[0,\pi)$ and $B_1=\partial\iota_0$ (that is, $B_1$ is the subdifferential operator of the indicator function $\iota_0$ of the origin) if $\xi=\pi$. Similarly, the maximally monotone operator $B_2$ that gives $R_{\gamma B_2}$ satisfies $B_2=\tfrac{1}{\gamma(1-\cos{\xi})}\left[\begin{smallmatrix}0&-\sin{\xi}\\
\sin{\xi}&0\end{smallmatrix}\right]$ if $\xi\in(0,\pi]$ and $B_2=0$ if $\xi=0$. 
\end{rem}

We have shown that the rate provided
in Theorem~\ref{thm:rate_general} is tight for all feasible $\alpha$ and $\gamma$.



\subsection{Comparison to other bounds}

In Figure~\ref{fig:rate_comp_general}, we have compared the
linear convergence rate result in 
Theorem~\ref{thm:rate_general} to the convergence rate result in
\cite{LionsMercier1979}. The comparison is made with optimal
$\gamma$-parameters for both bounds. The result in
\cite{LionsMercier1979} is provided in the standard Douglas-Rachford setting,
i.e., with $\alpha=1/2$. By instead letting
$\alpha=1$, this rate can be improved, see
\cite{Davis_Yin_2014} (which shows an improved rate in the composite convex optimization case, but the same rate can be shown to hold also for monotone inclusion problems). Also this
improved rate is added to the comparison in
Figure~\ref{fig:rate_comp_general}. We see that
both rates that follow from \cite{LionsMercier1979}
 are suboptimal and worse than the rate bound in
Theorem~\ref{thm:rate_general}.

\begin{figure}
\centering
\begin{tikzpicture}

    \begin{axis}
      [
      width=10cm,
      height=7cm,
      xtick={1,2,...,10},
      ytick={0,0.2,...,1},
      ymax = 1,
      ymin = 0,
      yscale = 0.7,
      ylabel = {\qquad\quad Convergence factor},
      xlabel = {Ratio $\beta/\sigma$},
      xmax = 10,
      xmin = 1,
      legend cell align=left,
      legend pos=south east,
      ]
      \addplot[blue,domain=1:10,samples=500]{((x-1)/(1+x))^(0.5)};
      \addplot[green,dashed,domain=1:10,samples=500]{(1-1/x)^0.5};
      \addplot[red,dash dot,domain=1:10,samples=500]{(1-1/(2*x))^0.5};
      \legend{Theorem~\ref{thm:rate_general},Improvement to \cite{LionsMercier1979},\cite{LionsMercier1979}};
      \addplot[black,domain=1:10,samples=2]{1};
\end{axis}
\end{tikzpicture}
\caption{Convergence rate comparison for general monotone 
inclusion problems where one operator is strongly monotone and
Lipschitz continuous. We compare Theorem~\ref{thm:rate_general} to
  \cite{LionsMercier1979}, and an improvement to
  \cite{LionsMercier1979} which holds when $\alpha=1$.}
\label{fig:rate_comp_general}

\end{figure}
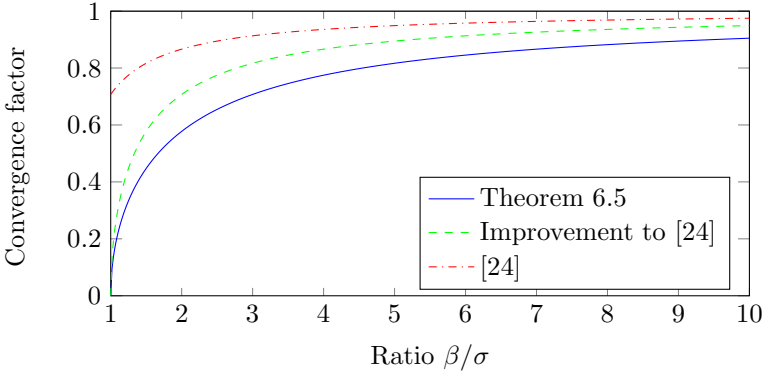

\section{$A$ strongly monotone and cocoercive}

\label{sec:A_sm_c}

In this section, we consider the case where $A$ is strongly monotone and cocoercive. That is, we assume the following.
\begin{ass}
Suppose that:
\begin{enumerate}[(i)]
\item The operators $A~:~\hilbert\to\hilbert$ and $B~:~\hilbert\to 2^{\hilbert}$ are maximally monotone.
\item $A$ is $\sigma$-strongly monotone and $\tfrac{1}{\beta}$-cocoercive.
\end{enumerate}
\label{ass:str_coco_same_operator}
\end{ass}

The linear convergence result for Douglas-Rachford splitting will follow from the contraction factor of the reflected resolvent of $A$. The contraction factor is provided in the following theorem, which is proven in Appendix~\ref{app:pf_A_sm_c}.
\begin{thm}
Suppose that $A~:~\hilbert\to \hilbert$ is a $\sigma$-strongly monotone and $\tfrac{1}{\beta}$-cocoercive operator. Then
its reflected resolvent $R_{A} = 2J_A-\id$ is contractive with factor
$\sqrt{1-\tfrac{4\sigma}{1+2\sigma+\sigma\beta}}$.
\label{thm:A_sm_c_contraction}
\end{thm}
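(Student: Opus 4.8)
The plan is to mimic the proof of Theorem~\ref{thm:refl_res_contr}, working directly from the defining inequalities of $\sigma$-strong monotonicity (Definition~\ref{def:str_monotone}) and $\tfrac1\beta$-cocoercivity (Definition~\ref{def:cocoercive}) of $A$. Fix $x,y\in\hilbert$ and put $p=J_Ax$, $q=J_Ay$; these are well defined because a $\tfrac1\beta$-cocoercive operator is $\beta$-Lipschitz (by Cauchy--Schwarz), hence continuous, so $\id+A$ is continuous and $(1+\sigma)$-strongly monotone, and therefore a bijection of $\hilbert$. From $p=J_Ax$ we get $x-p\in Ap$, and likewise $y-q\in Aq$, so $(p,x-p)$ and $(q,y-q)$ belong to ${\rm{gph}}(A)$.

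Applying strong monotonicity to these two pairs and rearranging yields
\begin{align}
\langle x-y,\,p-q\rangle\ \geq\ (1+\sigma)\|p-q\|^2,
\label{eq:planAsmc1}
\end{align}
while applying cocoercivity, expanding $\|(x-y)-(p-q)\|^2$, and multiplying through by $\beta>0$ yields
\begin{align}
(\beta+2)\langle x-y,\,p-q\rangle\ \geq\ \|x-y\|^2+(1+\beta)\|p-q\|^2.
\label{eq:planAsmc2}
\end{align}
Since $R_Ax-R_Ay=2(p-q)-(x-y)$, expanding the square gives $\|R_Ax-R_Ay\|^2=\|x-y\|^2-4\langle x-y,p-q\rangle+4\|p-q\|^2$, so it suffices to bound this quantity by $\delta^2\|x-y\|^2$ with $\delta^2:=1-\tfrac{4\sigma}{1+2\sigma+\sigma\beta}$.

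The key step is to add \eqref{eq:planAsmc1} and \eqref{eq:planAsmc2} with the nonnegative weights $\lambda=\tfrac{4}{1+2\sigma+\sigma\beta}$ and $\mu=\tfrac{4\sigma}{1+2\sigma+\sigma\beta}$. A direct check shows $\lambda+\mu(\beta+2)=4$ and $\lambda(1+\sigma)+\mu(1+\beta)=4$, so $\lambda\cdot\eqref{eq:planAsmc1}+\mu\cdot\eqref{eq:planAsmc2}$ collapses to $4\langle x-y,p-q\rangle-4\|p-q\|^2\geq\mu\|x-y\|^2$, i.e.
\begin{align*}
\|x-y\|^2-4\langle x-y,p-q\rangle+4\|p-q\|^2\ \leq\ (1-\mu)\|x-y\|^2=\delta^2\|x-y\|^2,
\end{align*}
which is exactly the asserted contraction. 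It remains to verify $\delta\in[0,1)$: since $\sigma>0$ we have $\mu>0$, so $\delta<1$; and $\delta^2\geq0$ reduces to the elementary inequality $\sigma(2-\beta)\leq1$, which follows because cocoercivity and strong monotonicity, combined with Cauchy--Schwarz, force $\sigma\leq\beta$.

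I expect the only real obstacle to be discovering the multipliers $\lambda,\mu$ — equivalently, recognizing that the worst case occurs where \eqref{eq:planAsmc1} and \eqref{eq:planAsmc2} are simultaneously tight. One can reach them systematically by normalizing $\|x-y\|=1$, setting $t=\langle x-y,p-q\rangle$ and $s=\|p-q\|^2$, and maximizing $1-4t+4s$ over the polyhedron cut out by \eqref{eq:planAsmc1}--\eqref{eq:planAsmc2}; the maximizer lies at the vertex $s=\tfrac1{1+2\sigma+\sigma\beta}$, $t=(1+\sigma)s$, which yields the value $\delta^2$ and, by LP duality, exhibits the weights $\lambda$ and $\mu$. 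Everything else is routine algebra.
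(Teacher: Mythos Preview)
Your proof is correct. The two inequalities \eqref{eq:planAsmc1} and \eqref{eq:planAsmc2} are precisely the paper's \eqref{eq:JA_coco} and \eqref{eq:JA_avg} once the latter (the $\tfrac{\beta}{2(1+\beta)}$-averagedness of $J_A$) is multiplied through by $1+\beta$; so you and the paper start from the same pair of quadratic constraints on $t=\langle x-y,p-q\rangle$ and $s=\|p-q\|^2$.

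Where you differ is in how the constraints are combined. The paper does a case split: it defines a threshold on $t/\|x-y\|^2$, treats the region $\mathcal{R}$ where $t$ is large by using \eqref{eq:JA_coco} to eliminate $s$ and the threshold to bound $t$, treats $\mathcal{R}_c$ by using \eqref{eq:JA_avg} to eliminate $s$ and the threshold the other way, and then spends a page of algebra verifying that the two resulting bounds coincide and simplify to $1-\tfrac{4\sigma}{1+2\sigma+\sigma\beta}$. Your LP-duality viewpoint replaces all of this with a single nonnegative combination $\lambda\cdot\eqref{eq:planAsmc1}+\mu\cdot\eqref{eq:planAsmc2}$ whose coefficients are chosen so that the $t$- and $s$-terms match those of $\|R_Ax-R_Ay\|^2$; this is shorter, avoids the case analysis entirely, and makes transparent why the bound is tight (the dual multipliers witness the vertex where both constraints bind). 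The paper's route is perhaps easier to stumble upon without the LP insight, but yours is the cleaner argument. Your verification that $\delta\in[0,1)$ via $\sigma\le\beta$ is also fine; one line that completes it is $1+\sigma\beta-2\sigma\ge 1+\sigma^2-2\sigma=(1-\sigma)^2\ge 0$.
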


When considering the reflected resolvent of $\gamma A$ where $\gamma\in(0,\infty)$, the $\gamma$-parameter can be chosen to optimize the contraction factor of $R_{\gamma A}$. The operator $\gamma A$ is $\gamma\sigma$-strongly monotone and $\tfrac{1}{\gamma\beta}$-cocoercive, so the optimal $\gamma>0$ minimizes $h(\gamma):=1-\tfrac{4\gamma\sigma}{1+2\gamma\sigma+\gamma^2\sigma\beta}$. The gradient of $h$ satisfies $\nabla h(\gamma) = \tfrac{4\sigma(\beta\sigma\gamma^2-1)}{(\beta\sigma\gamma^2+2\sigma\gamma+1)^2}$, so the extreme points of $h$ are given by $\gamma=\pm\tfrac{1}{\sqrt{\beta\sigma}}$. Since $\gamma>0$ and the gradient is negative for $\gamma\in(0,\tfrac{1}{\sqrt{\beta\sigma}})$ and positive for $\gamma>\tfrac{1}{\sqrt{\beta\sigma}}$, the parameter $\gamma=\tfrac{1}{\sqrt{\beta\sigma}}$ minimizes the contraction factor. The corresponding contraction factor is
\begin{align*}
\sqrt{1-\tfrac{4\gamma\sigma}{1+2\gamma\sigma+\gamma^2\sigma\beta}} = \sqrt{1-\tfrac{2\sqrt{\sigma/\beta}}{1+\sqrt{\sigma}{\beta}}}=\sqrt{\tfrac{1-\sqrt{\sigma/\beta}}{1+\sqrt{\sigma/\beta}}}=\sqrt{\tfrac{\sqrt{\beta/\sigma}-1}{\sqrt{\beta/\sigma}+1}}.
\end{align*}
This is summarized in the following proposition.

\begin{prop}
  The parameter $\gamma\in(0,\infty)$ that optimizes the contraction factor for $R_{\gamma A}$ is $\gamma=\tfrac{1}{\sqrt{\beta\sigma}}$. The corresponding contraction factor is $\sqrt{\tfrac{\sqrt{\beta/\sigma}-1}{\sqrt{\beta/\sigma}+1}}$.
\label{prp:A_sm_c_optparam}
\end{prop}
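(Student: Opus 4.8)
The plan is to combine Theorem~\ref{thm:A_sm_c_contraction} with an elementary one-variable minimization over $\gamma$. First I would note that if $A$ is $\sigma$-strongly monotone and $\tfrac{1}{\beta}$-cocoercive, then scaling in Definitions~\ref{def:str_monotone} and~\ref{def:cocoercive} shows that $\gamma A$ is $\gamma\sigma$-strongly monotone and $\tfrac{1}{\gamma\beta}$-cocoercive for every $\gamma\in(0,\infty)$. Applying Theorem~\ref{thm:A_sm_c_contraction} with $\gamma A$ in place of $A$ therefore gives that $R_{\gamma A}$ is contractive with factor $\sqrt{h(\gamma)}$, where $h(\gamma):=1-\tfrac{4\gamma\sigma}{1+2\gamma\sigma+\gamma^2\sigma\beta}$; in particular $h(\gamma)\in[0,1)$ for all $\gamma>0$, so the square root is well defined.

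Since $t\mapsto\sqrt{t}$ is strictly increasing on $[0,1)$, minimizing the contraction factor over $\gamma\in(0,\infty)$ is equivalent to minimizing $h$ there. I would then differentiate: a direct computation yields $\nabla h(\gamma)=\tfrac{4\sigma(\beta\sigma\gamma^2-1)}{(\beta\sigma\gamma^2+2\sigma\gamma+1)^2}$. The denominator is strictly positive for $\gamma>0$, so the sign of $\nabla h(\gamma)$ equals the sign of $\beta\sigma\gamma^2-1$; the unique positive root is $\gamma=\tfrac{1}{\sqrt{\beta\sigma}}$, and $h'$ is negative on $(0,\tfrac{1}{\sqrt{\beta\sigma}})$ and positive on $(\tfrac{1}{\sqrt{\beta\sigma}},\infty)$. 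Hence this critical point is the global minimizer of $h$ (equivalently, of the contraction factor) on $(0,\infty)$.

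Finally I would substitute $\gamma=\tfrac{1}{\sqrt{\beta\sigma}}$ and simplify. With $\gamma\sigma=\sqrt{\sigma/\beta}$ and $\gamma^2\sigma\beta=1$, the fraction in $h$ equals $\tfrac{4\sqrt{\sigma/\beta}}{2(1+\sqrt{\sigma/\beta})}=\tfrac{2\sqrt{\sigma/\beta}}{1+\sqrt{\sigma/\beta}}$, so $h(\tfrac{1}{\sqrt{\beta\sigma}})=\tfrac{1-\sqrt{\sigma/\beta}}{1+\sqrt{\sigma/\beta}}=\tfrac{\sqrt{\beta/\sigma}-1}{\sqrt{\beta/\sigma}+1}$ after multiplying numerator and denominator by $\sqrt{\beta/\sigma}$; taking the square root gives the claimed optimal contraction factor. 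I do not anticipate any real obstacle: the substantive content is entirely contained in Theorem~\ref{thm:A_sm_c_contraction}, and what remains is a routine univariate calculus computation together with algebraic simplification; the only point requiring minor care is verifying that the stationary point is a minimum rather than a maximum, which the sign analysis of $\beta\sigma\gamma^2-1$ settles cleanly.
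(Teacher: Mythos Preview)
Your proposal is correct and follows essentially the same approach as the paper: the paper also observes that $\gamma A$ is $\gamma\sigma$-strongly monotone and $\tfrac{1}{\gamma\beta}$-cocoercive, defines the same function $h(\gamma)=1-\tfrac{4\gamma\sigma}{1+2\gamma\sigma+\gamma^2\sigma\beta}$, computes the identical derivative $\nabla h(\gamma)=\tfrac{4\sigma(\beta\sigma\gamma^2-1)}{(\beta\sigma\gamma^2+2\sigma\gamma+1)^2}$, performs the same sign analysis to conclude that $\gamma=\tfrac{1}{\sqrt{\beta\sigma}}$ is the minimizer, and carries out the same algebraic simplification to obtain the optimal contraction factor.
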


Now we are ready to state the linear convergence rate result for the Douglas-Rachford algorithm.

\begin{thm}
Suppose that Assumption~\ref{ass:str_coco_same_operator} holds and that the
Douglas-Rachford algorithm \eqref{eq:DR_alg} is applied to solve $0\in
\gamma Ax+\gamma Bx$. Let $\delta = \sqrt{1-\tfrac{4\gamma\sigma}{1+2\gamma\sigma+\gamma^2\sigma\beta}}$, then
the algorithm converges at least with rate factor
\begin{align}
|1-\alpha|+\alpha\delta
\label{eq:A_sm_c_rate}
\end{align}
for all $\alpha\in(0,\tfrac{2}{1+\delta})$. Optimizing this bound w.r.t. $\alpha$ and $\gamma$ gives $\alpha=1$
and $\gamma=\tfrac{1}{\sqrt{\beta\sigma}}$ and corresponding optimal rate bound $\sqrt{\tfrac{\sqrt{\beta/\sigma}-1}{\sqrt{\beta/\sigma}+1}}$.
\label{thm:A_sm_c_rate}
\end{thm}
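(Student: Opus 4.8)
The plan is to mirror the structure used for Theorem~\ref{thm:rate_general}: reduce the statement to a contraction estimate for the Douglas--Rachford operator $(1-\alpha)\id+\alpha R_{\gamma A}R_{\gamma B}$ and then optimize the resulting bound over the two free parameters $\alpha$ and $\gamma$. First I would record the effect of the $\gamma$-scaling: since $A$ is $\sigma$-strongly monotone and $\tfrac{1}{\beta}$-cocoercive, the operator $\gamma A$ is $\gamma\sigma$-strongly monotone and $\tfrac{1}{\gamma\beta}$-cocoercive. Applying Theorem~\ref{thm:A_sm_c_contraction} to $\gamma A$ then gives that $R_{\gamma A}$ is $\delta$-contractive with $\delta=\sqrt{1-\tfrac{4\gamma\sigma}{1+2\gamma\sigma+\gamma^2\sigma\beta}}$, which is exactly the quantity in the statement.

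Next I would deal with the $B$-factor and the relaxation. Since $B$ is maximally monotone, its reflected resolvent $R_{\gamma B}$ is nonexpansive in general (\cite[Corollary~23.10]{bauschkeCVXanal}), so the composition $R_{\gamma A}R_{\gamma B}$ is again $\delta$-contractive, because composing a $\delta$-Lipschitz map with a nonexpansive map is $\delta$-Lipschitz. Now apply Lemma~\ref{lem:avg_contr} with $R_{\gamma A}R_{\gamma B}$ in the role of the $\delta$-contractive map: the relaxed iteration $(1-\alpha)\id+\alpha R_{\gamma A}R_{\gamma B}$ is contractive for every $\alpha\in(0,\tfrac{2}{1+\delta})$, with contraction factor $|1-\alpha|+\alpha\delta$. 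The Banach fixed-point theorem then yields linear convergence of $z^k$ at that rate, establishing \eqref{eq:A_sm_c_rate}.

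For the parameter optimization I would first optimize over $\alpha$ with $\gamma$ fixed: on $(0,1]$ the factor equals $1-\alpha(1-\delta)$, which is decreasing in $\alpha$, and on $[1,\tfrac{2}{1+\delta})$ it equals $\alpha(1+\delta)-1$, which is increasing; hence $\alpha=1$ is optimal and gives factor $\delta$. It then remains to minimize $\delta$ over $\gamma\in(0,\infty)$, which is precisely the content of Proposition~\ref{prp:A_sm_c_optparam}: the minimizer is $\gamma=\tfrac{1}{\sqrt{\beta\sigma}}$, with corresponding value $\sqrt{\tfrac{\sqrt{\beta/\sigma}-1}{\sqrt{\beta/\sigma}+1}}$. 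Combining the two steps gives the claimed optimal parameters and optimal rate bound.

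The substantive work lies outside this theorem, namely in Theorem~\ref{thm:A_sm_c_contraction}, whose proof must fuse strong monotonicity and cocoercivity of $A$ into a single sharp contraction estimate for $R_A$ --- most naturally by writing the relevant inequalities in terms of the pairs $(x,J_Ax)$ and $(y,J_Ay)$ and choosing the optimal multipliers, in analogy with Proposition~\ref{prp:res_str_mono_general} in the Lipschitz case. Given that result, the present theorem is a routine assembly; the only points requiring care are the constant bookkeeping under the $\gamma$-scaling and the piecewise-linear minimization in $\alpha$, neither of which poses a genuine obstacle.
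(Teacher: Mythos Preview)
Your proposal is correct and follows essentially the same approach as the paper's proof, which simply cites Theorem~\ref{thm:A_sm_c_contraction}, Lemma~\ref{lem:avg_contr}, and Proposition~\ref{prp:A_sm_c_optparam} and notes that $\alpha=1$ minimizes \eqref{eq:A_sm_c_rate}. If anything, your write-up is more complete, since you make explicit the step that $R_{\gamma B}$ is nonexpansive and hence $R_{\gamma A}R_{\gamma B}$ inherits the $\delta$-contraction of $R_{\gamma A}$, which the paper leaves implicit.
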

\begin{proof}
It follows immediately from Theorem~\ref{thm:A_sm_c_contraction}, Lemma~\ref{lem:avg_contr}, and Proposition~\ref{prp:A_sm_c_optparam} by noting that $\alpha=1$ minimizes \eqref{eq:A_sm_c_rate}.
\end{proof}

\subsection{Tightness}

In this section, we provide a two-dimensional example that shows that the provided bounds are tight. We let $A$ be the resolvent of a scaled rotation operator to achieve this. Let $C$ be that scaled rotation operator, i.e.,
\begin{align}
C=c\begin{bmatrix}
\cos{\psi} & -\sin{\psi}\\
\sin{\psi} & \cos{\psi}
\end{bmatrix}
\label{eq:C_rotmat}
\end{align}
with $c\in (1,\infty)$ and $\psi\in[0,\tfrac{\pi}{2})$. We will let $A$ satisfy $A=dJ_{C}$ for some $d\in(0,\infty)$. That is
\begin{align}
A=d(C+I)^{-1}=\tfrac{d}{(1+c\cos{\psi})^2+c^2\sin^2{\psi}}
\begin{bmatrix}
c\cos{\psi}+1 & c\sin{\psi}\\
-c\sin{\psi} & c\cos{\psi}+1
\end{bmatrix}.
\label{eq:A_mat_coco}
\end{align}
In the following proposition, we state the strong monotonicity and cocoercivity properties of $A$.
\begin{prop}
The operator $A$ in \eqref{eq:A_mat_coco} is $\tfrac{1+c\cos{\psi}}{d}$-cocoercive and strongly monotone with modulus $\tfrac{d(1+c\cos{\psi})}{1+2c\cos{\psi}+c^2}$.
\end{prop}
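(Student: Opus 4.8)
The plan is to recognize that $A$ in \eqref{eq:A_mat_coco}, like the operators used earlier for the tightness examples, is a positive multiple of a rotation matrix, so that both properties reduce to the quadratic-form computation already carried out in the proof of Proposition~\ref{prp:A_str_mono_and_Lipschitz}.

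First I would simplify the matrix. The denominator in \eqref{eq:A_mat_coco} satisfies $(1+c\cos\psi)^2+c^2\sin^2\psi = 1+2c\cos\psi+c^2$, so
\[
A = \tfrac{d}{1+2c\cos\psi+c^2}\begin{bmatrix} 1+c\cos\psi & c\sin\psi\\ -c\sin\psi & 1+c\cos\psi\end{bmatrix}.
\]
The $2\times 2$ matrix here has orthogonal columns, each of norm $\sqrt{1+2c\cos\psi+c^2}$, so it equals $\sqrt{1+2c\cos\psi+c^2}$ times an orthogonal matrix; consequently $A$ itself is $\tfrac{d}{\sqrt{1+2c\cos\psi+c^2}}$ times an orthogonal matrix.

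For strong monotonicity, since $A$ is linear it suffices to bound $\langle Av,v\rangle$ from below. Writing $v=(v_1,v_2)$ and expanding, the off-diagonal (skew-symmetric) contribution $c\sin\psi\,(v_1 v_2 - v_1 v_2)$ cancels exactly as in the proof of Proposition~\ref{prp:A_str_mono_and_Lipschitz}, leaving $\langle Av,v\rangle = \tfrac{d(1+c\cos\psi)}{1+2c\cos\psi+c^2}\|v\|^2$, which gives strong monotonicity with the claimed modulus (in fact with equality, so it is sharp). For cocoercivity I would use the orthogonality observation: $\|Av\|^2 = \tfrac{d^2}{1+2c\cos\psi+c^2}\|v\|^2$, and dividing the previous identity by this yields $\langle Av,v\rangle = \tfrac{1+c\cos\psi}{d}\|Av\|^2$, i.e. $\tfrac{1+c\cos\psi}{d}$-cocoercivity. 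Alternatively, since $A=dJ_C$ with $C$ the scaled rotation in \eqref{eq:C_rotmat}, and $C$ is a bounded linear monotone operator with full domain (hence maximally monotone) which is $c\cos\psi$-strongly monotone by the computation in Proposition~\ref{prp:A_str_mono_and_Lipschitz}, Lemma~\ref{lem:res_coco} gives that $J_C$ is $(1+c\cos\psi)$-cocoercive; multiplying that inequality through by $d$ produces the same conclusion for $A$.

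There is no real obstacle here; the only points requiring care are the algebraic identity for the denominator, checking that the skew-symmetric part of the quadratic form drops out, and — if the second route for cocoercivity is taken — noting that $C$ need not be nonexpansive when $c>1$ but is still maximally monotone, so that $J_C=(C+I)^{-1}$ is everywhere defined.
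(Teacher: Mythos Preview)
Your proof is correct, but for strong monotonicity you take a genuinely different and more elementary route than the paper. You observe that $A$ is a scaled rotation, so the skew-symmetric part contributes nothing to $\langle Av,v\rangle$ and the modulus can be read off directly from the diagonal entry (with equality, so sharpness is immediate). The paper instead argues indirectly through $C$: it uses that $C$ is $c$-Lipschitz to invoke Proposition~\ref{prp:res_str_mono_general} for $J_C$, combines this with the $(1+c\cos\psi)$-cocoercivity of $J_C$, and chooses a multiplier so that the $\|J_Cx-J_Cy\|^2$ terms cancel, leaving a strong-monotonicity inequality. Your computation is shorter; the paper's fits its running theme of deducing properties of $A$ from those of $C$ via resolvent calculus, which is consistent with how $R_{\gamma A}$ is analyzed afterward. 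For cocoercivity, your alternative route via Lemma~\ref{lem:res_coco} is exactly the paper's argument, while your primary route (computing $\|Av\|^2$ from the orthogonality of the columns and dividing) is again a direct computation the paper does not use.
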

\begin{proof}
The matrix $C$ in \eqref{eq:C_rotmat} is $c\cos{\psi}$-strongly monotone (see Proposition~\ref{prp:A_str_mono_and_Lipschitz}), so $J_C$ is $(1+c\cos{\psi})$-cocoercive (see \cite[Definition~4.4]{bauschkeCVXanal}) and the operator $A=d(I+C)^{-1}$ is $\tfrac{1+c\cos{\psi}}{d}$-cocoercive. Further, since $C$ is monotone and $c$-Lipschitz continuous (see Proposition~\ref{prp:A_str_mono_and_Lipschitz}), the following holds (see Proposition~\ref{prp:res_str_mono_general}):
\begin{align}
2\langle J_{C}x-J_Cy,x-y\rangle\geq \|x-y\|^2+(1-c^2)\|J_Cx-J_Cy\|^2.
\label{eq:C_lipschitz}
\end{align}
Since $J_C$ is $(1+c\cos{\psi})$-cocoercive, we have
\begin{align}
\langle J_Cx-J_Cy,x-y\rangle\geq (1+c\cos{\psi})\|J_Cx-J_Cy\|^2.
\label{eq:C_sm}
\end{align}
We add \eqref{eq:C_sm} multiplied by $-\tfrac{1-c^2}{1+c\cos{\psi}}$ (which is positive since $c\in(1,\infty)$) to \eqref{eq:C_lipschitz} to get 
\begin{align*}
(2-\tfrac{1-c^2}{1+c\cos{\psi}})\langle J_{C}x-J_Cy,x-y\rangle\geq \|x-y\|^2.
\end{align*}
That is, $J_C$ is $\sigma$-strongly monotone with
\begin{align*}
\frac{1}{2-\tfrac{1-c^2}{1+c\cos{\psi}}}=\frac{1+c\cos{\psi}}{2+2c\cos{\psi}-1+c^2}=\frac{1+c\cos{\psi}}{1+2c\cos{\psi}+c^2},
\end{align*}
so $A$ is strongly monotone with parameter $d\tfrac{1+c\cos{\psi}}{1+2c\cos{\psi}+c^2}$. This concludes the proof. 
\end{proof}

This shows that the assumptions needed for the linear convergence rate result in Theorem~\ref{thm:A_sm_c_rate} hold. To prove the tightness claim, we need an expression for the reflected resolvent of $A$. This is easier expressed in the strong convexity modulus, which we define as $\sigma$ and the inverse cocoercivity constant, which we define as $\beta$, i.e.,:
\begin{align}
\sigma&=\tfrac{d(1+c\cos{\psi})}{1+2c\cos{\psi}+c^2},&\beta&=\tfrac{d}{1+c\cos{\psi}}.
\label{eq:sig_sm_beta_coco}
\end{align}
The following results is proven in Appendix~\ref{app:pf_A_sm_c}.

\begin{prop}
The reflected resolvent $R_{\gamma A}$ of $\gamma A$, where $A$ is defined in \eqref{eq:A_mat_coco} and $\gamma\in(0,\infty)$, is given by
\begin{align*}
R_{\gamma A}=\sqrt{1-\frac{4\gamma\sigma}{1+2\gamma\sigma+\gamma^2\sigma\beta}}\begin{bmatrix}
\cos{\xi} & -\sin{\xi}\\
\sin{\xi} & \cos{\xi}
\end{bmatrix}
\end{align*}
where $\sigma$ and $\beta$ are defined in \eqref{eq:sig_sm_beta_coco}, and $\xi$ satisfies $\xi =\arctan_2\left(\tfrac{2\gamma\sqrt{\sigma(\beta-\sigma)}}{1-\sigma\beta\gamma^2}\right)$ with $\arctan_2$ defined in \eqref{eq:atan2}. 
\label{prp:A_sm_c_refl_contr}
\end{prop}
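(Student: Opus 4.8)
The plan is to compute the resolvent $J_{\gamma A}=(\id+\gamma A)^{-1}$ explicitly as a $2\times 2$ matrix and then form $R_{\gamma A}=2J_{\gamma A}-\id$. Since $A$ in \eqref{eq:A_mat_coco} is a linear operator of the form $aI+bW$ where $W=\left[\begin{smallmatrix}0&1\\-1&0\end{smallmatrix}\right]$ (a scaled rotation: the symmetric part is a multiple of $I$ and the antisymmetric part is a multiple of $W$), the operator $\id+\gamma A$ is again of this form, and its inverse can be written in closed form because matrices of the type $pI+qW$ form a commutative field isomorphic to $\mathbb{C}$. Concretely, I would write $\id+\gamma A = pI+qW$, invert to get $J_{\gamma A}=\tfrac{1}{p^2+q^2}(pI-qW)$, and then $R_{\gamma A}=\tfrac{1}{p^2+q^2}\big((2p-(p^2+q^2))I-2qW\big)$, which is once more of the form $rI+sW$, i.e.\ a scaled rotation.

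Next I would identify the scaling factor and the angle. The scaling factor is $\sqrt{r^2+s^2}$; the angle $\xi$ satisfies $\tan\xi = s/r$ (with the sign/branch conventions of $\arctan_2$), and one checks $r\geq 0$ so that $\arctan_2$ is applicable. The bulk of the work is then algebraic bookkeeping: expressing $p,q$ (hence $r,s$) in terms of $\gamma$ and the entries of \eqref{eq:A_mat_coco}, and then rewriting everything in terms of the derived constants $\sigma$ and $\beta$ from \eqref{eq:sig_sm_beta_coco}. I expect the magnitude computation to collapse exactly to $1-\tfrac{4\gamma\sigma}{1+2\gamma\sigma+\gamma^2\sigma\beta}$ — the same expression that appeared in Theorem~\ref{thm:A_sm_c_contraction} — which is the main consistency check, and it is reassuring because it matches the contraction factor already proven there. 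For the angle, one obtains $\tan\xi = \tfrac{2\gamma\sqrt{\sigma(\beta-\sigma)}}{1-\sigma\beta\gamma^2}$ after substituting the relations between $(c,\psi,d)$ and $(\sigma,\beta)$; here the identity $\beta-\sigma = \tfrac{d}{1+c\cos\psi}-\tfrac{d(1+c\cos\psi)}{1+2c\cos\psi+c^2}$ should simplify so that $\sqrt{\sigma(\beta-\sigma)}$ becomes proportional to $c\sin\psi$ divided by the common denominator, which is what produces the off-diagonal term of $R_{\gamma A}$.

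The main obstacle is purely computational: keeping the nested rational expressions under control when passing from the four entries of \eqref{eq:A_mat_coco} through the inversion and then re-parametrizing by $(\sigma,\beta)$. A clean way to avoid the worst of it is to never expand \eqref{eq:A_mat_coco} fully but instead to use $A=d J_C=d(I+C)^{-1}$ directly: since $C=c(I+W\tan\psi)\cos\psi$ — more precisely $C$ has the form of a scaled rotation — all of $I+C$, its inverse, $\id+\gamma A$, and finally $R_{\gamma A}$ remain within the commutative algebra $\{pI+qW\}$, so each step is a single complex-number multiplication or division. I would carry out the computation in that algebra, extract modulus and argument at the end, and only then substitute \eqref{eq:sig_sm_beta_coco} to match the claimed form. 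Verifying that the numerator of $\tan\xi$ is nonnegative (so that $\arctan_2$ is well defined and the rotation is in the stated orientation) is a small additional check that follows from $c\in(1,\infty)$, $\psi\in[0,\tfrac{\pi}{2})$, hence $\beta\geq\sigma$ and $c\sin\psi\geq 0$.
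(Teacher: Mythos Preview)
Your proposal is correct and follows essentially the same route as the paper: compute $J_{\gamma A}$ and $R_{\gamma A}$ as explicit $2\times 2$ matrices of scaled-rotation type, then read off modulus and argument and rewrite in terms of $\sigma,\beta$ via \eqref{eq:sig_sm_beta_coco}. Your framing through the commutative algebra $\{pI+qW\}\cong\mathbb{C}$ is a tidy way to organize exactly the computation the paper carries out entrywise; one small slip to fix when you write it up is that the applicability condition for $\arctan_2$ is nonnegativity of the \emph{numerator} of $\tan\xi$ (as you correctly note at the end), not of $r$.
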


Based on this reflected resolvent, we can show that the rate bound in Theorem~\ref{thm:A_sm_c_rate} is indeed tight. The proof of the following result is the same as the proof to Proposition~\ref{prp:A_sm_lip_tight}.
\begin{prop}
Let $\gamma\in(0,\infty)$, $\delta=\sqrt{1-\tfrac{4\gamma\sigma}{1+2\gamma\sigma+\gamma^2\sigma\beta}}$, and let $\xi$ be as defined in Proposition~\ref{prp:A_sm_c_refl_contr}. Suppose that $A$ is as in \eqref{eq:A_mat_coco} and $B$ is maximally monotone and satisfies either of the following:
\begin{enumerate}[(i)]
\item if $\alpha\in(0,1]$: $B=B_1$ with $R_{\gamma B_1}=\left[\begin{smallmatrix}\cos{\xi}&\sin{\xi}\\
-\sin{\xi}&\cos{\xi}\end{smallmatrix}\right]$,
\item $\alpha\in(1,\tfrac{2}{1+\delta})$: $B=B_2$ with $R_{\gamma B_2}=\left[\begin{smallmatrix}\cos{(\pi-\xi)}&-\sin{(\pi-\xi)}\\
\sin{(\pi-\xi)}&\cos{(\pi-\xi)}\end{smallmatrix}\right]$.
\end{enumerate}
Then the $z^k$ sequence for solving $0\in \gamma Ax+\gamma Bx$ using \eqref{eq:DR_alg} converges exactly with the rate $|1-\alpha|+\alpha\delta$.
\end{prop}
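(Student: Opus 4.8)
The plan is to mirror the structure of the proof of Proposition~\ref{prp:A_sm_lip_tight}, since the situation is essentially identical: we have an operator $A$ whose reflected resolvent $R_{\gamma A}$ is, by Proposition~\ref{prp:A_sm_c_refl_contr}, a rotation by angle $-\xi$ scaled by the factor $\delta$, and we introduce $B$ precisely so that $R_{\gamma B}$ is a rotation that cancels (or, in the $\alpha>1$ case, anti-cancels) the rotation in $R_{\gamma A}$. By the proposition relating nonexpansiveness and reflected resolvents (the consequence of Minty's theorem stated just before Proposition~\ref{prp:A_sm_lip_tight}), every rotation matrix is the reflected resolvent of some maximally monotone operator, so both choices $B_1$ and $B_2$ are legitimate.

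First I would treat case \emph{(i)}, $\alpha\in(0,1]$. Write out the Douglas–Rachford iteration $z^{k+1}=(1-\alpha)z^k+\alpha R_{\gamma A}R_{\gamma B_1}z^k$ and substitute the explicit forms. The composition
\begin{align*}
\begin{bmatrix}\cos\xi & -\sin\xi\\ \sin\xi & \cos\xi\end{bmatrix}
\begin{bmatrix}\cos\xi & \sin\xi\\ -\sin\xi & \cos\xi\end{bmatrix} = I,
\end{align*}
so $R_{\gamma A}R_{\gamma B_1}=\delta I$ and the iteration collapses to $z^{k+1}=\big((1-\alpha)+\alpha\delta\big)z^k$. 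Since $\alpha\le 1$ we have $1-\alpha\ge 0$, hence $(1-\alpha)+\alpha\delta=|1-\alpha|+\alpha\delta$, and every nonzero initial point $z^0$ yields $\|z^{k+1}\|=(|1-\alpha|+\alpha\delta)\|z^k\|$, i.e.\ convergence exactly at the claimed rate.

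For case \emph{(ii)}, $\alpha\in(1,\tfrac{2}{1+\delta})$, I would do the analogous computation with $B_2$. Here $R_{\gamma A}R_{\gamma B_2}$ is $\delta$ times the product of a rotation by $-\xi$ and a rotation by $\pi-\xi$ (the latter appearing with the transposed sign pattern so that it is rotation by $-(\pi-\xi)$ composed appropriately); the angles combine to $-\pi$, giving $R_{\gamma A}R_{\gamma B_2}=-\delta I$. Then $z^{k+1}=\big((1-\alpha)-\alpha\delta\big)z^k=-\big((\alpha-1)+\alpha\delta\big)z^k$, and since $\alpha>1$ we have $\alpha-1=|1-\alpha|$, so the contraction factor is again exactly $|1-\alpha|+\alpha\delta$; one checks $\alpha<\tfrac{2}{1+\delta}$ guarantees this is $<1$. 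The only genuine care needed is bookkeeping of the rotation-angle signs so that the two matrices in each case really do multiply to $\pm I$ — the transposed sign conventions in the statements of Propositions~\ref{prp:A_sm_c_refl_contr} and in the definition of $B_1,B_2$ are chosen exactly for this, and verifying it is the one step where a sign slip could occur, but it is routine trigonometry. Since the example lies in $\reals^2$ which is a Hilbert space, and $A$ satisfies Assumption~\ref{ass:str_coco_same_operator} by the preceding proposition, this exhibits a problem in the class converging exactly at the rate of Theorem~\ref{thm:A_sm_c_rate}, completing the proof.
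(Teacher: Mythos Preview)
Your proposal is correct and follows exactly the approach the paper intends: the paper itself states that ``the proof of the following result is the same as the proof to Proposition~\ref{prp:A_sm_lip_tight},'' and your argument mirrors that proof step by step. One small slip in your narrative---you describe $R_{\gamma A}$ as a rotation by $-\xi$, whereas Proposition~\ref{prp:A_sm_c_refl_contr} gives it as a rotation by $+\xi$ (the sign convention is flipped relative to Section~\ref{sec:A_sm_l})---but your explicit matrix product in case~(i) is written correctly and the conclusions in both cases are unaffected.
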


So, we have shown that the rate in Theorem~\ref{thm:A_sm_c_rate} is tight for all feasible algorithm parameters $\alpha$ and $\gamma$.

\subsection{Comparison to other bounds}


\begin{figure}
\centering
\begin{tikzpicture}

    \begin{axis}
      [
      width=10cm,
      height=7cm,
      xtick={1,2,...,10},
      ytick={0,0.2,...,1},
      ymax = 1,
      ymin = 0,
      yscale = 0.7,
      ylabel = {\qquad\quad Convergence factor},
      xlabel = {Ratio $\beta/\sigma$},
      xmax = 10,
      xmin = 1,
      legend cell align=left,
      legend pos=south east,
      ]
      \addplot[blue,domain=1:10,samples=500]{(x^0.5-1)/(1+x^0.5)};
      \addplot[green,dashed,domain=1:10,samples=500]{((x^0.5-1)/(1+x^0.5))^(0.5)};
      \addplot[red,dash dot,domain=1:10,samples=500]{((x-1)/(1+x))^(0.5)};
      \legend{\cite{gisBoydTAC2014metric_select},Theorem~\ref{thm:A_sm_c_rate},Theorem~\ref{thm:rate_general}};
      \addplot[black,domain=1:10,samples=2]{1};
\end{axis}
\end{tikzpicture}
\caption{Convergence rate comparison between Theorem~\ref{thm:rate_general}, Theorem~\ref{thm:A_sm_c_rate}, and \cite{gisBoydTAC2014metric_select}.
In all, one operator has both regularity properties. It is strongly monotone in all examples and Lipschitz in Theorem~\ref{thm:rate_general}, cocoercive in Theorem~\ref{thm:A_sm_c_rate} (which is stronger than Lipschitz), and a cocoercive subdifferential operator in \cite{gisBoydTAC2014metric_select} (which is the strongest property). The worst-case rate improves when the class of problems becomes more restricted.} 
\label{fig:rate_comp_gen_vs_subdiff}

\end{figure}
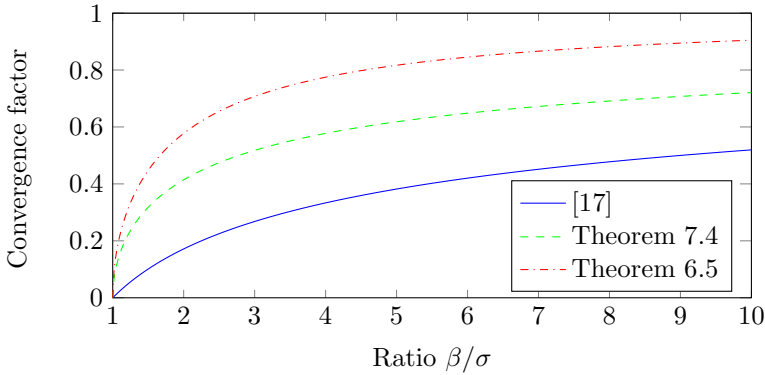

We have shown tight convergence rate estimates for Douglas-Rachford splitting when the monotone operator $A$ is cocoercive and strongly monotone (Theorem~\ref{thm:A_sm_c_rate}). In Section~\ref{sec:A_sm_l}, we showed tight estimates when $A$ is Lipschitz and strongly monotone (Theorem~\ref{thm:rate_general}). In \cite{gisBoydTAC2014metric_select}, tight convergence rate estimates are proven for the case when $A$ and $B$ are subdifferential operators of proper closed and convex functions and $A$ is strongly monotone and Lipschitz continuous (which in this case is equivalent to cocoercive). The class of problems considered in \cite{gisBoydTAC2014metric_select} is a subclass of the problems considered in this section, which in turn is a subclass of the problems considered in Section~\ref{sec:A_sm_l}. The optimal rates for these classes of problems are shown in Figure~\ref{fig:rate_comp_gen_vs_subdiff}. By restricting the problem classes, the rate bounds get tighter. This is in contrast to the case in Section~\ref{sec:split_prop}, where a convex optimization problem achieved the worst case estimate.

\section{Conclusions}

We have shown linear convergence rate bounds for Douglas-Rachford
splitting for monotone inclusion problems with three different sets of assumptions. One setting was the one used by Lions and Mercier \cite{LionsMercier1979}, for which we provided a tighter bound. We also stated linear convergence rate bounds under two other assumptions, for which no other linear rate bounds were previously available. In addition, we have shown that all our rate bounds are tight for, in two cases all feasible algorithm parameters, and in the remaining case many algorithm parameters. 

\subsection*{Acknowledgment}
The author would like to thank Heinz Bauschke for suggesting the term \emph{negatively averaged operators}.

\begin{appendix}

\section{Proofs to Lemmas in Section~\ref{sec:operator_prop_rel}}

\label{app:lemma_proofs}

\subsection{Proof to Lemma~\ref{lem:coco_vs_lipschitz}}

From the definition of cocoercivity, Definition~\ref{def:cocoercive}, it follows directly that
$\beta\id+T$ is $\tfrac{1}{2\beta}$-cocoercive if and only if
$\tfrac{1}{2\beta}(\beta\id+T)$ is 1-cocoercive. This, in turn is equivalent to that
$2\tfrac{1}{2\beta}(\beta\id+T)-\id=\tfrac{1}{\beta}T$ is
nonexpansive \cite[Proposition~4.2 and
Definition~4.4]{bauschkeCVXanal}. Finally, from the definition of
Lipschitz continuity, Definition~\ref{def:Lipschitz}, it follows
directly that $\tfrac{1}{\beta} T$ is nonexpansive if and only if
$T$ is $\beta$-Lipschitz continuous. This concludes the proof.

\subsection{Proof to Lemma~\ref{lem:coco_averaged}}
Let $T_1 = R-\tfrac{\beta}{2}\id$. Then Lemma~\ref{lem:coco_vs_lipschitz} states that
$\tfrac{1}{\beta}$-cocoercivity of $T_1+\tfrac{\beta}{2}\id=R$ is equivalent to 
$\tfrac{\beta}{2}$-Lipschitz continuity of $T_1=R-\tfrac{\beta}{2}\id$.
By definition of Lipschitz continuity, this is equivalent to that $T_1 =
\tfrac{\beta}{2}T_2$ for some nonexpansive operator $T_2$. Therefore
$T=R+(1-\beta)\id =
T_1+(1-\tfrac{\beta}{2})\id=\tfrac{\beta}{2}T_2+(1-\tfrac{\beta}{2})\id$.
Since $\beta\in(0,1)$, this is equivalent to that $T$ is
$\tfrac{\beta}{2}$-averaged. This concludes the proof.

\subsection{Proof to Lemma~\ref{lem:avg_contr}}

Let $x$ and $y$ be any points in $\hilbert$. Then
\begin{align*}
\|Rx-Ry\|&=\|(1-\alpha)x+\alpha Tx-(1-\alpha)y-\alpha Ty\|\\
&\leq |1-\alpha|\|x-y\|+|\alpha|\|Tx-Ty\|\\
&\leq |1-\alpha|\|x-y\|+|\alpha|\delta\|x-y\|\\
&= (|1-\alpha|+|\alpha|\delta)\|x-y\|.
\end{align*}
So $R$ is $(|1-\alpha|\|x-y\|+|\alpha|\delta)$-Lipschitz continuous. The Lipschitz constant is less than 1 if $\alpha\in(0,\tfrac{2}{1+\delta})$. For such $\alpha$, $R$ is contractive. Since $\alpha>0$, the contraction factor is $(|1-\alpha|+\alpha\delta)$. This concludes the proof.

\section{Proofs to results in Section~\ref{sec:split_prop}}
\label{app:pf_split_prop}

\subsection{Proof to Proposition~\ref{prp:res_prop_lipschitz_subdiff}}

Since $B$ is $\tfrac{1}{\beta}$-cocoercive, it satisfies
\begin{align*}
\langle Bu-Bv,u-v\rangle\geq\tfrac{1}{\beta}\|Bu-Bv\|^2.
\end{align*}
Adding $\|u-v\|^2$ to both sides gives
\begin{align*}
\langle (B+\id)u-(B+\id)v,u-v\rangle&\geq\|u-v\|^2\\
&\quad+\tfrac{1}{\beta}\|(B+\id)u-(B+\id)v-(u-v)\|^2.
\end{align*}
Letting $x= (B+\id)u$ and $y= (B+\id)v$ implies that $u= J_{B}x$ and $v= J_By$. Therefore, we get the equivalent expression
\begin{align*}
\langle x-y,J_Bx-J_By\rangle\geq\|J_Bx-J_By\|^2+\tfrac{1}{\beta}\|x-y-(J_Bx-J_By)\|^2.
\end{align*}
Expansion of the second square gives
\begin{align*}
&\beta\langle x-y,J_Bx-J_By\rangle\\
&\geq\beta\|J_Bx-J_By\|^2+\|x-y\|^2-2\langle x-y,J_Bx-J_By\rangle+\|J_Bx-J_By\|^2,
\end{align*}
or equivalently
\begin{align*}
&\quad(\beta+2)\langle x-y,J_Bx-J_By\rangle\geq\|x-y\|^2+(\beta+1)\|J_Bx-J_By\|^2\\
\Leftrightarrow&\quad\tfrac{\beta+2}{\beta+1}\langle x-y,J_Bx-J_By\rangle\geq\tfrac{1}{\beta+1}\|x-y\|^2+\|J_Bx-J_By\|^2\\
\Leftrightarrow&\quad(2(1-\tfrac{\beta}{2(1+\beta)}))\langle x-y,J_Bx-J_By\rangle\\
&\quad\qquad\qquad\qquad\qquad\qquad\geq(1-2\tfrac{\beta}{2(\beta+1)})\|x-y\|^2+\|J_Bx-J_By\|^2.
\end{align*}
This is, by \cite[Proposition~4.25]{bauschkeCVXanal}, equivalent to that $J_B$ is $\tfrac{\beta}{2(\beta+1)}$-averaged. This concludes the proof.

\subsection{Proof to Theorem~\ref{thm:rate_subdiff}}
\label{app:thm_split_prop}

Since $R_{\gamma A}R_{\gamma B}$ is
$\tfrac{\tfrac{1}{\gamma\sigma}+\gamma\beta}{\tfrac{1}{\gamma\sigma}+\gamma\beta+1}$-negatively averaged,
see Proposition~\ref{prp:comp_refl_res_anti_averaged_subdiff},
the Douglas-Rachford iteration is defined by an $\alpha$-averaged
$\tfrac{\tfrac{1}{\gamma\sigma}+\gamma\beta}{\tfrac{1}{\gamma\sigma}+\gamma\beta+1}$-negatively
averaged
operator. The rate in \eqref{eq:DR_rate_bound_AB_prop_subdiff} follows
directly from Proposition~\ref{prp:avg_anti_avg_contr}. The optimal
parameters follow from
Proposition~\ref{prp:avg_antiavg_optparam_subdiff}. It shows that the
rate factor is increasing in 
$\tfrac{\tfrac{1}{\gamma\sigma}+\gamma\beta}{\tfrac{1}{\gamma\sigma}+\gamma\beta+1}$,
which in turn is increasing in $\tfrac{1}{\gamma\sigma}+\gamma\beta$.
Therefore this should be minimized to optimize the rate. The optimal
$\gamma=\tfrac{1}{\sqrt{\beta\sigma}}$ gives negative averagedness factor
$\tfrac{\tfrac{1}{\gamma\sigma}+\gamma\beta}{\tfrac{1}{\gamma\sigma}+\gamma\beta+1}=\tfrac{2\sqrt{\beta/\sigma}}{1+2\sqrt{\beta/\sigma}}$.
Proposition~\ref{prp:avg_antiavg_optparam_subdiff} further gives that
the optimal averagedness factor is
\begin{align*}
\alpha = \tfrac{1}{2-\tfrac{2\sqrt{\beta/\sigma}}{1+2\sqrt{\beta/\sigma}}}=\tfrac{1+2\sqrt{\beta/\sigma}}{2+2\sqrt{\beta/\sigma}}=\tfrac{1/2+\sqrt{\beta/\sigma}}{1+\sqrt{\beta/\sigma}}
\end{align*}
and that the optimal bound on the contraction factor is
\begin{align*}
\tfrac{\tfrac{2\sqrt{\beta/\sigma}}{2\sqrt{\beta/\sigma}+1}}{2-\tfrac{2\sqrt{\beta/\sigma}}{2\sqrt{\beta/\sigma}+1}}=\tfrac{2\sqrt{\beta/\sigma}}{2+2\sqrt{\beta/\sigma}}=\tfrac{\sqrt{\beta/\sigma}}{1+\sqrt{\beta/\sigma}}.
\end{align*}
This concludes the proof.

\subsection{Proof to Proposition~\ref{prp:tight_split_prop}}

The proximal and reflected proximal operators of $f$ are trivially given by
\begin{align}
\label{eq:f_prox}{\rm{prox}}_{\gamma f}(y)&=(\tfrac{1}{1+\gamma\beta}y_1,y_2),&R_{\gamma f}(y)&=(\tfrac{1-\gamma\beta}{1+\gamma\beta}y_1,y_2).
\end{align}
Linearity of the proximal operator and Moreau's decomposition \cite[Theorem~14.3]{bauschkeCVXanal} imply that the reflected resolvent of $f^*$ is given by
\begin{align*}
R_{\gamma f^*} &= 2{\rm{prox}}_{\gamma
  f^*}-\id=2(\id-\gamma{\rm{prox}}_{\gamma^{-1}f}\circ(\gamma^{-1}\id))-\id=-(2{\rm{prox}}_{\gamma^{-1}f}-\id) \\
&=-R_{\gamma^{-1}f}.
\end{align*}
This gives the following Douglas-Rachford iteration:
\begin{align*}
z^{k+1}&=((1-\alpha)\id+\alpha R_{\gamma f}R_{\gamma f^*})z^k\\
&=(1-\alpha)z^k-\alpha R_{\gamma f}R_{\gamma^{-1}f}z^k\\
&=(1-\alpha)z^k-\alpha \left(\tfrac{(1-\gamma\beta)(1-\gamma^{-1}\beta)}{(1+\gamma\beta)(1+\gamma\beta)}z_1^k,z_2^k\right).
\end{align*}
Since we start at a point $z^0=(0,z_2^0)$, we will get $z_1^k=0$ for all $k\geq 1$, and the Douglas-Rachford iteration becomes
\begin{align*}
z^{k+1} = \left(1-2\alpha\right)z^k
\end{align*}
with contraction factor given by $|1-2\alpha|$. 

When $\alpha\in[c,1)$, the absolute value term in \eqref{eq:DR_rate_bound_AB_prop_subdiff} is nonpositive since
\begin{align*}
(1-2\alpha+\alpha\tfrac{\tfrac{1}{\gamma\sigma}+\gamma\beta}{1+\tfrac{1}{\gamma\sigma}+\gamma\beta})&\leq
0 &\Leftrightarrow && \alpha&\geq\tfrac{1}{2-\tfrac{\tfrac{1}{\gamma\sigma}+\gamma\beta}{1+\tfrac{1}{\gamma\sigma}+\gamma\beta}}=\tfrac{1+\tfrac{1}{\gamma\sigma}+\gamma\beta}{2+\tfrac{1}{\gamma\sigma}+\gamma\beta}=c.
\end{align*}
Therefore, for such $\alpha$, the rate in \eqref{eq:DR_rate_bound_AB_prop_subdiff} is $|1-2\alpha|$. This coincides with the rate for the provided example for any $\gamma>0$, and the proof is completed.

\section{Proofs to results in Section~\ref{sec:A_sm_l}}
\label{app:pf_A_sm_l}

\subsection{Proof to Proposition~\ref{prp:res_str_mono_general}}

$\beta$-Lipschitz continuity of $A$ implies that $\beta\id+A$ is
$\tfrac{1}{2\beta}$-cocoercive, see Lemma~\ref{lem:coco_vs_lipschitz}. That is
\begin{align*}
\langle (\beta\id&+A)u-(\beta\id+A)v,u-v\rangle
\geq\tfrac{1}{2\beta}\|(\beta\id+A)u-(\beta\id+A)v\|^2.
\end{align*}
Using $\beta\id = \id+(\beta-1)\id$, this is equivalent to that
\begin{align*}
\langle (\id+A)u-(\id+A)v,u-v\rangle
&\geq\tfrac{1}{2\beta}\|(\id+A)u-(\id+A)v+(\beta-1)(u-v)\|^2\\
&\quad+(1-\beta)\|u-v\|^2.
\end{align*}
Using that $x=(\id+A)u$ if and only if $u= (\id+A)^{-1}x$ and 
$y=(\id+A)v$ if and only if $v= (\id+A)^{-1}y$
(that hold by definition of the inverse and single-valuedness), this is equivalent to
\begin{align*}
\langle x-y,(\id+A)^{-1}x-&(\id+A)^{-1}y\rangle\\
&\geq\tfrac{1}{2\beta}\|x-y+(\beta-1)((\id+A)^{-1}x-(\id+A)^{-1}y)\|^2\\
&\quad+(1-\beta)\|(\id+A)^{-1}x-(\id+A)^{-1}y\|^2.
\end{align*}
Identifying the resolvent $J_A=(\id+A)^{-1}$ and expanding the first square give:
\begin{align*}
\langle &J_Ax-J_Ay,x-y\rangle\\
&\geq\tfrac{1}{2\beta}\|x-y+(\beta-1)(J_Ax-J_Ay)\|^2+(1-\beta)\|J_Ax-J_Ay\|^2\\
&=\tfrac{1}{2\beta}\left(\|x-y\|^2+2(\beta-1)\langle J_Ax-J_Ay,x-y\rangle +(\beta-1)^2\|J_Ax-J_Ay\|^2\right)\\
&\quad+(1-\beta)\|J_Ax-J_Ay\|^2
\end{align*}
By rearranging the terms, we conclude that
\begin{align*}
(1+\tfrac{1-\beta}{\beta})\langle J_Ax-J_Ay,x-y\rangle
&\geq\tfrac{1}{2\beta}\left(\|x-y\|^2+(\beta-1)^2\|J_Ax-J_Ay\|^2\right)\\
&\quad+(1-\beta)\|J_Ax-J_Ay\|^2\\
&=\tfrac{1}{2\beta}\|x-y\|^2+\tfrac{(\beta-1)^2+2\beta(1-\beta)}{2\beta}\|J_Ax-J_Ay\|^2\\
&=\tfrac{1}{2\beta}\|x-y\|^2+\tfrac{1-\beta^2}{2\beta}\|J_Ax-J_Ay\|^2.
\end{align*}
The result follows by multiplying by $2\beta$, since $1+\tfrac{1-\beta}{\beta}=\tfrac{1}{\beta}$.
This concludes the proof.

\subsection{Proof to Theorem~\ref{thm:refl_res_contr}}

We divide the proof into two cases, $\beta\geq 1$ and $\beta\leq 1$.
\subsection*{Case $\beta\geq 1$}
From 
\cite[Proposition~23.11]{bauschkeCVXanal}, we get that
$J_A$ is $(1+\sigma)$-cocoercive, i.e., that
\begin{align}
\langle J_Ax-J_Ay,x-y\rangle\geq (1+\sigma)\|J_Ax-J_Ay\|^2.
\label{eq:res_coco}
\end{align}
Adding $(\beta^2-1)(\geq 0)$ of \eqref{eq:res_coco} to $(1+\sigma)$ of
\eqref{eq:res_prop_A_lipschitz}, we get 
\begin{align*}
(2(1+\sigma)+(\beta^2-1))\langle J_Ax-&J_Ay,x-y\rangle
\geq (1+\sigma)\|x-y\|^2
\end{align*}
or equivalently
\begin{align}
\langle J_Ax-&J_Ay,x-y\rangle
\geq \tfrac{1+\sigma}{1+2\sigma+\beta^2}\|x-y\|^2
\label{eq:add_res_prop_gen}
\end{align}
since the $\|J_Ax-J_Ay\|$ terms cancel. We get
\begin{align}
\nonumber\|R_Ax-R_Ay\|^2 &= \|2J_Ax-2J_Ay-(x-y)\|^2\\
\nonumber&=4\|J_Ax-J_Ay\|^2-4\langle J_Ax-J_Ay,x-y\rangle+\|x-y\|^2\\
\nonumber&\leq 4(\tfrac{1}{1+\sigma}-1)\langle J_Ax-J_Ay,x-y\rangle+\|x-y\|^2\\
\nonumber&= -\tfrac{4\sigma}{1+\sigma}\langle J_Ax-J_Ay,x-y\rangle+\|x-y\|^2\\
\nonumber&\leq
-\tfrac{4\sigma}{1+\sigma}\tfrac{1+\sigma}{1+2\sigma+\beta^2}\|x-y\|^2+\|x-y\|^2\\
\label{eq:refl_res_contr_beta_geq_1}&=(1-\tfrac{4\sigma}{1+2\sigma+\beta^2})\|x-y\|^2
\end{align}
where \eqref{eq:res_coco} and \eqref{eq:add_res_prop_gen} are used in
the inequalities. Thus, the said result holds for $\beta\geq 1$.

\subsection*{Case $\beta\leq 1$}

To prove the result for $\beta\leq 1$, we define the set $\mathcal{R}$
of pairs of points $(x,y)\in\hilbert\times\hilbert$ as follows:
\begin{align}
\mathcal{R} = \left\{(x,y)~|~\langle
  J_Ax-J_Ay,x-y\rangle\geq\tfrac{1+\sigma}{1+2\sigma+\beta^2}\|x-y\|^2\right\}.
\label{eq:R_set_def}
\end{align}
We also define the closure of the remaining pairs of points
$\mathcal{R}_c=\overbar{(\hilbert\times\hilbert)\backslash\mathcal{R}}$, i.e.,
\begin{align}
\mathcal{R}_c = \left\{(x,y)~|~\langle J_Ax-J_Ay,x-y\rangle\leq\tfrac{1+\sigma}{1+2\sigma+\beta^2}\|x-y\|^2\right\}.
\label{eq:Rc_set_def}
\end{align}
Obviously, $\hilbert\times\hilbert\subseteq\mathcal{R}+\mathcal{R}_c$
which implies that the contraction factor of the resolvent is the
worst-case contraction factor for $\mathcal{R}$ and
$\mathcal{R}_c$. We first show the contraction factor for
$\mathcal{R}$. Since \eqref{eq:add_res_prop_gen} is the
definition of the set $\mathcal{R}$ in \eqref{eq:R_set_def}, the
contraction factor for $(x,y)\in\mathcal{R}$ is shown exactly as in
\eqref{eq:refl_res_contr_beta_geq_1}. For $(x,y)\in\mathcal{R}_c$, we
have
\begin{align*}
\|R_Ax-R_Ay\|^2 &= \|2J_Ax-2J_Ay-(x-y)\|^2\\
&=4\|J_Ax-J_Ay\|^2-4\langle J_Ax-J_Ay,x-y\rangle+\|x-y\|^2\\
&\leq \tfrac{4}{1-\beta^2}\left(2\langle
  J_Ax-J_Ay,x-y\rangle-\|x-y\|^2\right)\\
&\quad-4\langle J_Ax-J_Ay,x-y\rangle+\|x-y\|^2\\
&=4\left(\tfrac{2}{1-\beta^2}-1\right)\langle
J_Ax-J_Ay,x-y\rangle+\left(1-\tfrac{4}{1-\beta^2}\right)\|x-y\|^2\\
&\leq\left(1-\tfrac{4}{1-\beta^2}+4\tfrac{1+\beta^2}{1-\beta^2}\tfrac{1+\sigma}{1+2\sigma+\beta^2}\right)\|x-y\|^2\\
&=\left(1-\tfrac{4(1+2\sigma+\beta^2)-4(1+\beta^2)(1+\sigma)}{(1-\beta^2)(1+2\sigma+\beta^2)}\right)\|x-y\|^2\\
&=\left(1-\tfrac{4+8\sigma+4\beta^2-(4+4\beta^2+4\sigma+4\sigma\beta^2)}{(1-\beta^2)(1+2\sigma+\beta^2)}\right)\|x-y\|^2\\
&=\left(1-\tfrac{4\sigma(1-\beta^2)}{(1-\beta^2)(1+2\sigma+\beta^2)}\right)\|x-y\|^2\\
&=\left(1-\tfrac{4\sigma}{1+2\sigma+\beta^2}\right)\|x-y\|^2
\end{align*}
where \eqref{eq:res_prop_A_lipschitz} is used in the first inequality
and the definition of $\mathcal{R}_c$ in \eqref{eq:Rc_set_def} in the second.
That is, the worst case contraction factor is
$\sqrt{1-\tfrac{4\sigma}{1+2\sigma+\beta^2}}$ also for $\beta\leq 1$.

It remains to show that the contraction factor is in the interval $[0,1)$. We
show that the square of the contraction factor is in $[0,1)$. We
have
$1-\tfrac{4\sigma}{1+2\sigma+\beta^2} = \tfrac{1-2\sigma+\beta^2}{1+2\sigma+\beta^2}<1$.
Further, since $\sigma\leq\beta$, we have
$1-2\sigma+\beta^2\geq 1-2\sigma+\sigma^2=(1-\sigma)^2\geq0$. So the
numerator is nonnegative and the denominator is positive, which gives
a nonnegative contraction factor. This concludes the proof.

\subsection{Proof to Proposition~\ref{prp:A_sm_lip_refl_contr}}

First, we compute the resolvent $J_{\gamma A}$. It satisfies
\begin{align*}
J_{\gamma A} &= (I+\gamma A)^{-1}=\begin{bmatrix} 1+\gamma d\cos{\psi} & -\gamma d\sin{\psi}\\
\gamma d\sin{\psi}& 1+\gamma d\cos{\psi}\end{bmatrix}^{-1}\\
&=\frac{1}{1+2\gamma d\cos{\psi}+(\gamma d)^2}\begin{bmatrix} 1+\gamma d\cos{\psi} & \gamma d\sin{\psi}\\
-\gamma d\sin{\psi}& 1+\gamma d\cos{\psi}\end{bmatrix}\\
&=\frac{1}{1+2\gamma \sigma+(\gamma \beta)^2}\begin{bmatrix} 1+\gamma \sigma & \gamma \beta\sin{\psi}\\
-\gamma \beta\sin{\psi}& 1+\gamma \sigma\end{bmatrix}
\end{align*}
The reflected resolvent is
\begin{align*}
R_{\gamma A} &=2J_{\gamma A}-I\\
&=\frac{2}{1+2\gamma \sigma+(\gamma \beta)^2}\begin{bmatrix} 1+\gamma \sigma-\tfrac{1+2\gamma\sigma+(\gamma\beta)^2}{2} & \gamma \beta\sin{\psi}\\
-\gamma \beta\sin{\psi}& 1+\gamma \sigma-\tfrac{1+2\gamma\sigma+(\gamma\beta)^2}{2}\end{bmatrix}\\
&=\frac{2}{1+2\gamma \sigma+(\gamma \beta)^2}\begin{bmatrix} \tfrac{1}{2}(1-(\gamma\beta)^2) & \gamma \beta\sin{\psi}\\
-\gamma \beta\sin{\psi}& \tfrac{1}{2}(1-(\gamma\beta)^2)\end{bmatrix}
\end{align*}
where we have used
\begin{align*}
(1+\gamma\sigma)^2+(\gamma\beta)^2\sin^2{\psi}=(1+\gamma \beta\cos{\psi})^2+(\gamma \beta)^2\sin^2{\psi}=1+2\gamma\sigma+(\gamma\beta)^2.
\end{align*}
Since $\gamma\beta\sin{\psi}$ is nonnegative, this implies
\begin{align*}
\gamma\beta\sin{\psi}=\sqrt{1-2\gamma\sigma+(\gamma\beta)^2-(1+\gamma\sigma)^2} = \gamma\sqrt{\beta^2-\sigma^2}.
\end{align*}
Therefore, the reflected resolvent is
\begin{align*}
R_{\gamma A} 
&=\frac{2}{1+2\gamma \sigma+(\gamma \beta)^2}\begin{bmatrix} \tfrac{1}{2}(1-(\gamma\beta)^2) & \gamma\sqrt{\beta^2-\sigma^2}\\
-\gamma\sqrt{\beta^2-\sigma^2}& \tfrac{1}{2}(1-(\gamma\beta)^2)\end{bmatrix}.
\end{align*}
Now, let us introduce polar coordinates of the elements:
\begin{align*}
\delta(\cos{\xi},\sin{\xi}) = \left(\tfrac{1}{2}(1-(\gamma\beta)^2),\gamma\sqrt{\beta^2-\sigma^2}\right),
\end{align*}
which gives reflected resolvent
\begin{align}
R_{\gamma A} 
&=\frac{2\delta}{1+2\gamma \sigma+(\gamma \beta)^2}\begin{bmatrix} \cos{\xi} & \sin{\xi}\\
-\sin{\xi}& \cos{\xi}\end{bmatrix}.
\label{eq:A_sm_lip_refl_res}
\end{align}
The angle $\xi$ in the polar coordinate satisfies
\begin{align*}
\tan{\xi} = \tfrac{2\gamma\sqrt{\beta^2-\sigma^2}}{1-(\gamma\beta)^2}
\end{align*}
and since the numerator is nonnegative, $\xi=\arctan_2\left(\tfrac{2\gamma\sqrt{\beta^2-\sigma^2}}{1-(\gamma\beta)^2}\right)$ where $\arctan_2$ is defined in \eqref{eq:atan2}. For the radius $\delta$ in the polar coordinate, we get
\begin{align*}
\delta^2&=\delta^2(\cos^2{\psi}+\sin^2{\psi})\\
&=\tfrac{1}{4}(1-(\gamma\beta)^2)^2+\gamma^2(\beta^2-\sigma^2)\\
&=\tfrac{1}{4}(1-2(\gamma\beta)^2+(\gamma\beta)^4)+\gamma^2(\beta^2-\sigma^2)\\
&=\tfrac{1}{4}(1-2\gamma\sigma+(\gamma\beta)^2)(1+2\gamma\sigma+(\gamma\beta)^2)
\end{align*}
and (since $\delta>0$)
\begin{align}
2\delta&=\sqrt{(1-2\gamma\sigma+(\gamma\beta)^2)(1+2\gamma\sigma+(\gamma\beta)^2)}
\label{eq:delta_A_sm_lip}
\end{align}
It remains to compute the factor in \eqref{eq:A_sm_lip_refl_res}. Using \eqref{eq:delta_A_sm_lip}, we get
\begin{align*}
\frac{2\delta}{1+2\gamma\sigma+(\gamma\beta)^2}&=\frac{\sqrt{(1-2\gamma\sigma+(\gamma\beta)^2)(1+2\gamma\sigma+(\gamma\beta)^2)}}{1+2\gamma\sigma+(\gamma\beta)^2}\\
&=\sqrt{\frac{1-2\gamma\sigma+(\gamma\beta)^2}{1+2\gamma\sigma+(\gamma\beta)^2}}\\
&=\sqrt{1-\frac{4\gamma\sigma}{1+2\gamma\sigma+(\gamma\beta)^2}}
\end{align*}
This completes the proof.

\section{Proofs to results in Section~\ref{sec:A_sm_c}}
\label{app:pf_A_sm_c}

\subsection{Proof to Theorem~\ref{thm:A_sm_c_contraction}}

We know from Lemma~\ref{lem:res_coco} and Definition~\ref{def:cocoercive} that $J_A$ is $(1+\sigma)$-cocoercive, i.e., that it satisfies
\begin{align}
\langle J_Ax-J_Ay,x-y\rangle \geq (1+\sigma)\|J_Ax-J_Ay\|^2
\label{eq:JA_coco}
\end{align}
for all $x,y\in\hilbert$. From Proposition~\ref{prp:res_prop_lipschitz_subdiff}, we know that $J_A$ is $\tfrac{\beta}{2(1+\beta)}$-averaged, i.e., that it satisfies (see \cite[Proposition~4.25(iv)]{bauschkeCVXanal})
\begin{align}
2(1-\tfrac{\beta}{2(1+\beta)})\langle J_Ax-J_Ay,x-y\rangle\geq (1-\tfrac{\beta}{1+\beta})\|x-y\|^2+\|J_Ax-J_Ay\|^2
\label{eq:JA_avg}
\end{align}
for all $x,y\in\hilbert$.
Let $\alpha=\tfrac{\beta}{2(1+\beta)}$ and $\delta=\tfrac{1}{1+\sigma}$ and define the set $\mathcal{R}$ of pairs of points $(x,y)\in\hilbert\times\hilbert$ as:
\begin{align}
\mathcal{R}=\left\{(x,y)~|~\langle J_Ax-J_Ay,x-y\rangle\geq \left(\tfrac{\delta}{2}+\tfrac{(1-\alpha-\delta/2)^2-\alpha^2+(\delta/2)^2}{2(1-\alpha-\delta/2)}\right)\|x-y\|^2\right\}.
\label{eq:R_def}
\end{align}
We also define the closure of set of remaining pairs of points
$\mathcal{R}_c=\overbar{(\hilbert\times\hilbert)\backslash\mathcal{R}}$, i.e.,
\begin{align}
\mathcal{R}_c = \left\{(x,y)~|~\langle J_Ax-J_Ay,x-y\rangle\leq(\tfrac{\delta}{2}+\tfrac{(1-\alpha-\delta/2)^2-\alpha^2+(\delta/2)^2}{2(1-\alpha-\delta/2)})\|x-y\|^2\right\}.
\label{eq:Rc_def}
\end{align}
Obviously, the contraction factor for $R_A$ is the worst-case contraction factor for pairs of points in $\mathcal{R}$ and $\mathcal{R}_c$. 
\subsection*{Contraction factor on $\mathcal{R}$}
First, we provide a contraction factor for pairs of points in $\mathcal{R}$. Since $R_A=2J_A-\id$, we have
\begin{align*}
\|R_Ax-R_Ay\|^2&=4\|J_Ax-J_Ay\|^2-4\langle J_Ax-J_Ay,x-y\rangle+\|x-y\|^2\\
&\leq (4\delta-4)\langle J_Ax-J_Ay,x-y\rangle+\|x-y\|^2\\
&\leq \left(4(\delta-1)\left(\tfrac{\delta}{2}+\tfrac{(1-\alpha-\delta/2)^2-\alpha^2+(\delta/2)^2}{2(1-\alpha-\delta/2)}\right)+1\right)\|x-y\|^2
\end{align*}
where $\delta=\tfrac{1}{1+\sigma}\in(0,1)$ and $\alpha=\tfrac{\beta}{2(1+\beta)}$ and the inequalities follow from \eqref{eq:JA_coco} and the definition of $\mathcal{R}$ in \eqref{eq:R_def}.
\subsection*{Contraction factor on $\mathcal{R}_c$}
Next, we provide a contraction factor for pairs of points in $\mathcal{R}_c$. Since $R_A=2J_A-\id$, we conclude that
\begin{align*}
\|R_Ax&-R_Ay\|^2\\
&=4\|J_Ax-J_Ay\|^2-4\langle J_Ax-J_Ay,x-y\rangle+\|x-y\|^2\\
&\leq (4(2(1-\alpha)-1)\langle J_Ax-J_Ay,x-y\rangle+(1-4(1-2\alpha)))\|x-y\|^2\\
&\leq \left(4(1-2\alpha)\left(\tfrac{\delta}{2}+\tfrac{(1-\alpha-\delta/2)^2-\alpha^2+(\delta/2)^2}{2(1-\alpha-\delta/2)}\right)+1-4+8\alpha\right)\|x-y\|^2,
\end{align*}
where we have used that $\alpha\in(0,\tfrac{1}{2})$, \eqref{eq:JA_avg}, and the definition of $\mathcal{R}_c$ in \eqref{eq:Rc_def}.
\subsection*{Contraction factor of $R_A$}
Here, we show that the contraction factors on $\mathcal{R}$ and $\mathcal{R}_c$ are identical, and we simplify the expression to get a final contraction factor for the reflected resolvent $R_A$. That the contraction factors are identical is shown by verifying that the difference between is zero:
\begin{align*}
&4(1-2\alpha)\left(\tfrac{\delta}{2}+\tfrac{(1-\alpha-\delta/2)^2-\alpha^2+(\delta/2)^2}{2(1-\alpha-\delta/2)}\right)+1-4+8\alpha\\
&\qquad\qquad\qquad\qquad\qquad\qquad\qquad-4(\delta-1)\left(\tfrac{\delta}{2}+\tfrac{(1-\alpha-\delta/2)^2-\alpha^2+(\delta/2)^2}{2(1-\alpha-\delta/2)}\right)-1\\
&=4(2-2\alpha-\delta)\left(\tfrac{\delta}{2}+\tfrac{(1-\alpha-\delta/2)^2-\alpha^2+(\delta/2)^2}{2(1-\alpha-\delta/2)}\right)-4+8\alpha\\
&=2(2-2\alpha-\delta)\delta+4\left((1-\alpha-\delta/2)^2-\alpha^2+(\delta/2)^2\right)-4+8\alpha\\
&=2(2-2\alpha-\delta)\delta+4\left((1-2\alpha-\delta+\alpha\delta+\alpha^2+(\delta/2)^2-\alpha^2+(\delta/2)^2\right)-4+8\alpha\\
&=4\delta-4\alpha\delta-2\delta^2+4-8\alpha-4\delta+4\alpha\delta+2\delta^2-4+8\alpha=0.
\end{align*}
Next, we simplify this contraction factor by inserting $\delta=\tfrac{1}{1+\sigma}$ and $\alpha=\tfrac{\beta}{2(1+\beta)}$. We get
\begin{align*}
&4(\delta-1)\left(\tfrac{\delta}{2}+\tfrac{(1-\alpha-\delta/2)^2-\alpha^2+(\delta/2)^2}{2(1-\alpha-\delta/2)}\right)+1\\
&=4(\tfrac{1}{1+\sigma}-1)\left(\tfrac{1}{2(1+\sigma)}+\tfrac{\left(1-\tfrac{\beta}{2(1+\beta)}-\tfrac{1}{2(1+\sigma)}\right)^2-\left(\tfrac{\beta}{2(1+\beta)}\right)^2+\left(\tfrac{1}{2(1+\sigma)}\right)^2}{2\left(1-\tfrac{\beta}{2(1+\beta)}-\tfrac{1}{2(1+\sigma)}\right)}\right)+1\\
&=4(\tfrac{1}{1+\sigma}-1)\bigg(\tfrac{1}{2(1+\sigma)}\\
&\qquad+\tfrac{1-\tfrac{\beta}{1+\beta}-\tfrac{1}{1+\sigma}+\tfrac{\beta}{2(1+\sigma)(1+\beta)}+\left(\tfrac{\beta}{2(1+\beta)}\right)^2+\left(\tfrac{1}{2(1+\sigma)}\right)^2-\left(\tfrac{\beta}{2(1+\beta)}\right)^2+\left(\tfrac{1}{2(1+\sigma)}\right)^2}{2\left(1-\tfrac{\beta}{2(1+\beta)}-\tfrac{1}{2(1+\sigma)}\right)}\bigg)+1\\
&=4(\tfrac{1}{1+\sigma}-1)\left(\tfrac{1}{2(1+\sigma)}+\tfrac{1-\tfrac{\beta}{1+\beta}-\tfrac{1}{1+\sigma}+\tfrac{\beta}{2(1+\sigma)(1+\beta)}+\tfrac{1}{2(1+\sigma)^2}}{2\left(1-\tfrac{\beta}{2(1+\beta)}-\tfrac{1}{2(1+\sigma)}\right)}\right)+1\\
&=4(\tfrac{1}{1+\sigma}-1)\left(\tfrac{2\left(1-\tfrac{\beta}{2(1+\beta)}-\tfrac{1}{2(1+\sigma)}\right)+2(1+\sigma)\left(1-\tfrac{\beta}{1+\beta}-\tfrac{1}{1+\sigma}+\tfrac{\beta}{2(1+\sigma)(1+\beta)}+\tfrac{1}{2(1+\sigma)^2}\right)}{4(1+\sigma)\left(1-\tfrac{\beta}{2(1+\beta)}-\tfrac{1}{2(1+\sigma)}\right)}\right)+1\\
&=4(\tfrac{1}{1+\sigma}-1)\left(\tfrac{2\left(1-\tfrac{\beta}{2(1+\beta)}-\tfrac{1}{2(1+\sigma)}\right)+2\left((1+\sigma)-\tfrac{\beta(1+\sigma)}{1+\beta}-1+\tfrac{\beta}{2(1+\beta)}+\tfrac{1}{2(1+\sigma)}\right)}{4(1+\sigma)\left(1-\tfrac{\beta}{2(1+\beta)}-\tfrac{1}{2(1+\sigma)}\right)}\right)+1\\
&=4(\tfrac{1}{1+\sigma}-1)\left(\tfrac{2+2\sigma-\tfrac{2\beta(\sigma+1)}{1+\beta}}{4(1+\sigma)\left(1-\tfrac{\beta}{2(1+\beta)}-\tfrac{1}{2(1+\sigma)}\right)}\right)+1\\
&=4(\tfrac{1}{1+\sigma}-1)\left(\tfrac{2\beta+2+2\beta\sigma+2\sigma-2\beta(\sigma+1)}{4(1+\sigma)(1+\beta)\left(1-\tfrac{\beta}{2(1+\beta)}-\tfrac{1}{2(1+\sigma)}\right)}\right)+1\\
&=4(\tfrac{1}{1+\sigma}-1)\left(\tfrac{2+2\sigma}{2\left(2(1+\sigma)(1+\beta)-\beta(1+\sigma)-(1+\beta)\right)}\right)+1\\
&=4(\tfrac{1}{1+\sigma}-1)\left(\tfrac{2+2\sigma}{2(2+2\sigma+2\beta+2\beta\sigma-\beta-\beta\sigma-1-\beta)}\right)+1\\
&=4(\tfrac{1}{1+\sigma}-1)\left(\tfrac{2(1+\sigma)}{2(1+2\sigma+\beta\sigma)}\right)+1\\
&=4(\tfrac{-\sigma}{1+\sigma})\left(\tfrac{2+2\sigma}{2(1+2\sigma+\beta\sigma)}\right)+1\\
&=1-\tfrac{4\sigma}{1+2\sigma+\beta\sigma}.
\end{align*}
Taking the square root concludes the proof.

\subsection{Proof to Proposition~\ref{prp:A_sm_c_refl_contr}}

We start by computing the resolvent and reflected resolvent of $\gamma A$.  The resolvent of $\gamma A$ is given by
\begin{align*}
J_{\gamma A}&=(I+\gamma A)^{-1}\\
&=\begin{bmatrix}\tfrac{\gamma d(1+c\cos{\psi})}{(1+c\cos{\psi})^2+c^2\sin^2{\psi}}+1 &
\tfrac{\gamma dc\sin{\psi}}{(1+c\cos{\psi})^2+c^2\sin^2{\psi}}\\
-\tfrac{\gamma dc\sin{\psi}}{(1+c\cos{\psi})^2+c^2\sin^2{\psi}}&
\tfrac{\gamma d(1+c\cos{\psi})}{(1+c\cos{\psi})^2+c^2\sin^2{\psi}}+1
\end{bmatrix}^{-1}\\
&=\begin{bmatrix}\tfrac{\gamma d(1+c\cos{\psi})}{1+2c\cos{\psi}+c^2}+1 &
\tfrac{\gamma dc\sin{\psi}}{1+2c\cos{\psi}+c^2}\\
-\tfrac{\gamma dc\sin{\psi}}{1+2c\cos{\psi}+c^2}&
\tfrac{\gamma d(1+c\cos{\psi})}{1+2c\cos{\psi}+c^2}+1
\end{bmatrix}^{-1}\\
            &=\begin{bmatrix}\gamma 
\sigma+1 &
\tfrac{\gamma c\sin{\psi}\sigma}{1+c\cos{\psi}}\\
-\tfrac{\gamma dc\sin{\psi}\sigma}{1+c\cos{\psi}}&
\gamma \sigma+1
\end{bmatrix}^{-1}\\
&=\begin{bmatrix}\gamma \sigma+1 &
\gamma c\sin{\psi}\sigma\beta/d\\
-\gamma c\sin{\psi}\sigma\beta/d&
\gamma \sigma+1
\end{bmatrix}^{-1}\\
&=\tfrac{1}{(\gamma \sigma+1)^2+(\gamma c\sigma\beta\sin{\psi}/d)^2}\begin{bmatrix}
\gamma \sigma+1 & -\gamma  c \sigma\beta\sin{\psi}/d\\
\gamma  c \sigma\beta\sin{\psi}/d&\gamma \sigma+1
\end{bmatrix}
\end{align*}
where $\sigma$ and $\beta$ are defined in \eqref{eq:sig_sm_beta_coco}.
The reflected resolvent $R_{\gamma A}$ is given by
\begin{align*}
R_{\gamma A}&=2J_{\gamma A}-I\\
&=\tfrac{2}{(\gamma \sigma+1)^2+(\gamma  c\sigma\beta\sin{\psi}/d)^2}\\
&\quad\times\begin{bmatrix}
\gamma  \sigma+1-\tfrac{(\gamma \sigma+1)^2+(\gamma  c\sigma\beta\sin{\psi}/d)^2}{2} & -\gamma  c \sigma\beta\sin{\psi}/d\\
\gamma c \sigma\beta\sin{\psi}/d&\gamma  \sigma+1-\tfrac{(\gamma \sigma+1)^2+(\gamma  c\sigma\beta\sin{\psi}/d)^2}{2}
\end{bmatrix}.
\end{align*}
To simplify this expression, we note that
\begin{align*}
\beta/\sigma&=\tfrac{d(1+2c\cos{\psi}+c^2)}{d(1+c\cos{\phi})(1+c\cos{\psi})}=\tfrac{(1+2c\cos{\psi}+c^2\pm c^2\cos^2{\psi})}{(1+2c\cos{\phi}+c^2\cos^2{\psi})}=\\
&=1+\tfrac{c^2(1-\cos^2{\psi})}{(1+c\cos{\psi})^2}=1+\tfrac{c^2\sin^2{\psi}}{(1+c\cos{\psi})^2}=1+\tfrac{\beta^2 c^2\sin^2{\psi}}{d^2}.
\end{align*}
This implies that
\begin{align}
 \nonumber (\gamma\sigma+1)^2+(\gamma c\sigma\beta\sin{\psi}/d)^2&=1+2\gamma\sigma+(\gamma\sigma)^2(1+c^2\beta^2\sin^2{\psi}/d^2)\\
&=1+2\gamma\sigma+\sigma\beta\gamma^2.
\label{eq:refl_den}
\end{align}
Using this equality, we can simplify the reflected resolvent expression:
\begin{align*}
R_{\gamma A}&=\tfrac{2}{1+2\gamma\sigma+\sigma\beta\gamma^2}\begin{bmatrix}
\tfrac{1}{2}(1-\beta\sigma\gamma^2) & -\gamma\sqrt{\sigma(\beta-\sigma)}\\
\gamma\sqrt{\sigma(\beta-\sigma)}&\tfrac{1}{2}(1-\beta\sigma\gamma^2)
\end{bmatrix},
\end{align*}
since $\gamma c\sigma\beta\sin{\psi}/d>0$. Now, let us write the matrix elements using polar coordinates:
\begin{align*}
\delta(\cos{\xi},\sin{\xi})=\left(\tfrac{1}{2}(1-\sigma\beta\gamma^2),\gamma\sqrt{\sigma(\beta-\sigma)}\right).
\end{align*}
This gives the reflected resolvent:
\begin{align}
R_{\gamma A}=\tfrac{2\delta}{1+2\gamma\sigma+\sigma\beta\gamma^2}\begin{bmatrix}
\cos{\xi} & -\sin{\xi}\\
\sin{\xi} & \cos{\xi}
\end{bmatrix}.
\label{eq:refl_gA}
\end{align}
The angle $\xi$ in the polar coordinates satisfies
\begin{align*}
\tan{\xi} = \tfrac{2\gamma\sqrt{\sigma(\beta-\sigma)}}{1-\sigma\beta\gamma^2}.
\end{align*}
The numerator is always nonnegative, so $\xi$ is given by $\xi=\arctan_2\left(\tfrac{2\gamma\sqrt{\sigma(\beta-\sigma)}}{1-\sigma\beta\gamma^2}\right)$ with $\arctan_2$ defined in \eqref{eq:atan2}. The radius $\delta$ in the polar coordinates satisfies
\begin{align*}
\delta^2&=\delta^2(\cos^2{\xi}+\sin^2{\xi})\\
&=(\tfrac{1-\sigma\beta\gamma^2}{2})^2+\gamma^2\sigma(\beta-\sigma)\\
&=\tfrac{1}{4}-\tfrac{\sigma\beta\gamma^2}{2}+\tfrac{(\sigma\beta\gamma^2)^2}{4}+\sigma\beta\gamma^2-(\gamma\sigma)^2\\
&=\tfrac{1}{4}+\tfrac{\sigma\beta\gamma^2}{2}+\tfrac{(\sigma\beta\gamma^2)^2}{4}-(\gamma\sigma)^2\\
&=\tfrac{1}{4}(1-2\gamma\sigma+\gamma^2\sigma\beta)(1+2\gamma\sigma+\gamma^2\sigma\beta)
\end{align*}
and (since $\delta>0$)
\begin{align}
2\delta = \sqrt{(1-2\gamma\sigma+\gamma^2\sigma\beta)(1+2\gamma\sigma+\gamma^2\sigma\beta)}.
\label{eq:delta_coco}
\end{align}

It remains to compute the factor in \eqref{eq:refl_gA}. Using \eqref{eq:delta_coco}, we conclude
\begin{align*}
\frac{2\delta}{1+2\gamma\sigma+\sigma\beta\gamma^2}&=\frac{\sqrt{(1-2\gamma\sigma+\gamma^2\sigma\beta)(1+2\gamma\sigma+\gamma^2\sigma\beta)}}{1+2\gamma\sigma+\gamma^2\sigma\beta}\\
&=\sqrt{\frac{1-2\gamma\sigma+\gamma^2\sigma\beta}{1+2\gamma\sigma+\gamma^2\sigma\beta}}\\
&=\sqrt{1-\frac{4\gamma\sigma}{1+2\gamma\sigma+\gamma^2\sigma\beta}}.
\end{align*}
This completes the proof.

\end{appendix}


\bibliographystyle{plain}
\bibliography{references}

\end{document}